\theoremstyle{plain}
\newtheorem{thm}{Theorem}[section]
\newtheorem{theorem}[thm]{Theorem}
\newtheorem{lemma}[thm]{Lemma}
\newtheorem*{proper*}{Property}
\newtheorem{proposition}[thm]{Proposition}
\newtheorem{corollary}[thm]{Corollary}
\newtheorem*{lm*}{Lemma}
\newtheorem*{thm*}{Theorem}
\theoremstyle{definition}
\newtheorem{definition}[thm]{Definition}
\newtheorem*{df*}{Definition}
\newtheorem{example}[thm]{Example}
\newtheorem{ex-notn}[thm]{Example/Notation}
\newtheorem{construction}[thm]{Construction}
\theoremstyle{remark}
\newtheorem{observation}[thm]{Observation}
\newtheorem{remark}[thm]{Remark}
\newtheorem*{acknowledgement*}{Acknowledgement}
\newtheorem*{ex*}{Example}
\newtheorem*{exer*}{Exercise}
\newtheorem*{rem*}{Remark}
\newtheorem*{prob*}{Problem}
\newtheorem*{prop*}{Proposition}
\def\link{\operatorname{link}}
\def\dN{\operatorname{dN}}
\def\uN{\operatorname{uN}}
\def\ud{\operatorname{ud}}
\def\frakC{\mathfrak{C}}
\def\KK{{\mathbb K}}
\def\NN{{\mathbb N}}
\def\RR{{\mathbb R}}
\def\ZZ{{\mathbb Z}}
\def\calA{\mathcal{A}}
\def\calC{\mathcal{C}}
\def\calD{\mathcal{D}}
\def\calE{\mathcal{E}}
\def\calF{\mathcal{F}}
\def\calH{\mathcal{H}}
\def\calI{\mathcal{I}}
\def\calK{\mathcal{K}}
\def\calV{\mathcal{V}}
\def\alert#1{\textcolor{Magenta}{#1}}
\def\bar#1{\overline{#1}}
\def\dis{\operatorname{dis}}
\def\deg{\operatorname{deg}}
\def\Index#1{\emph{#1}}
   \def\MR#1{}
\begin{document}
\title{Edgewise strongly shellable clutters}

\author{Jin Guo}
\address{College of Information Science and Technology, Hainan University, Haikou, Hainan, 570228, P.R.~China}
\email{guojinecho@163.com}

\author{Yi-Huang Shen}
\address{Wu Wen-Tsun Key Laboratory of Mathematics of CAS and School of Mathematical Sciences, University of Science and Technology of China, Hefei, Anhui, 230026, P.R.~China}
\email{yhshen@ustc.edu.cn}

\author{Tongsuo Wu}
\address{Department of Mathematics, Shanghai Jiaotong University, Minhang, Shanghai, 200240, P.R.~China}
\email{tswu@sjtu.edu.cn}


\begin{abstract}
When $\calC$ is a chordal clutter in the sense of Woodroofe or Emtander, we show that the complement clutter is edgewise strongly shellable. When $\calC$ is indeed a finite simple graph, we study various characterizations of chordal graphs from the point of view of strong shellability. In particular, the generic graph $G_T$ of a tree is shown to be bi-strongly shellable. We also characterize edgewise strongly shellable bipartite graphs in terms of constructions from upward sequences.
\end{abstract}

\thanks{This research is supported by the National Natural Science Foundation of China (Grant No.~11201445, 11526065 and 11271250) and  the Fundamental Research Funds for the Central Universities. }

\keywords{Strong shellability; Chordal graphs; Clutters; Bipartite graphs; Ferrers graphs}

\subjclass[2010]{
05E40, 
05E45, 
13C14, 
05C65. 
}

\maketitle

\section{Introduction }

Recall that a \Index{simplicial complex} $\Delta$ on the vertex set $\calV(\Delta)=V$ is a finite subset of $2^{V}$, such that $A \in \Delta$ and $B \subseteq A$ implies $B \in \Delta$.
Typically, for the simplicial complexes considered here, the vertex set is $[n]\coloneqq\Set{1,2,\dots,n}$ for some $n\in \ZZ_+$.
The set $A$ is called a \Index{face} if $A \in \Delta$, and called a \Index{facet} if $A$ is a maximal face with respect to inclusion.
Sets of  facets of $\Delta$ will be denoted by $\calF(\Delta)$.
When $\calF(\Delta)=\Set{F_1,\dots,F_t}$, we write $\Delta=\braket{F_1,\dots,F_t}$.
Any set in $2^V\setminus \Delta$ is called a \Index{nonface} of $\Delta$.  The \Index{dimension} of a face $A$, denoted by $\dim(A)$, is $|A|-1$. The \Index{dimension} of a simplicial complex $\Delta$, denoted by $\dim(\Delta)$, is the maximum dimension of its faces. The simplicial complex $\Delta$ is \Index{pure} if all the facets of $\Delta$ have the same dimension.

Recall that a simplicial complex $\Delta$ is called \Index{shellable} if  there exists a linear order $F_1, \dots, F_m$ on its facet set $\mathcal{F}(\Delta)$  such that for each pair $i<j$, there exists a $k<j$, such that $F_j \setminus F_k = \{v\}$ for some $v \in F_j \setminus F_i$.
Such a linear order is called a \Index{shelling order}.
Shellability is an important property when investigating a simplicial complex.  It is well-known that a pure shellable simplicial complex is Cohen-Macaulay, see \cite{MR2724673}.
Matroid complexes, shifted complexes and vertex decomposable complexes are all known to be shellable.

In \cite{SSC}, a stronger requirement was imposed on the shelling orders of shellable simplicial complexes. A simplicial complex $\Delta$ is called \Index{strongly shellable} if its facets can be arranged in a linear order $F_1,\dots,F_t$ in such a way that for each pair $i<j$, there exists $k<j$, such that $|F_j\setminus F_k|=1$ and $F_i\cap F_j\subseteq F_k \subseteq F_i\cup F_j$. Such an ordering of facets will be called a \Index{strong shelling order}. Matroid complexes and pure shifted complexes are all known to be strongly shellable.

As a simple example, let $L_n$ be the line graph with $n$ vertices. Considered as a one-dimensional simplicial complex, $L_n$ is shellable (actually, even vertex decomposable) for all $n$. But $L_n$ is strongly shellable only for $n\le 4$. Additionally, it is well-known that the Stanley-Reisner ideals of the Alexander dual of shellable complexes have linear quotients. An important fact about pure strongly shellable complexes is that the corresponding facet ideals also have linear quotients. Therefore, pure vertex decomposable complexes, not having this property, are in general not strongly shellable. Conversely, we also showed an example in \cite{SSC} that pure strongly shellable complexes are not necessarily vertex decomposable.  Some of the other pertinent facts of strong shellability are summarized in Section 2.

To provide more concrete examples for pure strongly shellable complexes, we show in this paper that strong shellability occurs naturally when considering chordal clutters and graphs. 
Recall that a finite simple graph $G$ is \Index{chordal} if each cycle in $G$ of length at least
$4$ has at least one chord. For finite simple graphs, chordality is an important and fascinating topic. Chordal graphs have many seemingly quite different characterizations, which were later generalized from various perspectives to clutters. 

Note that the facet set $\calF(\Delta)$ of a simplicial complex $\Delta$ gives rise to a clutter $\calC$ whose edge set is $\calF(\Delta)$. When $\Delta$ is strongly shellable, we will call $\calC$ \Index{edgewise strongly shellable}, abbreviated as ESS. In section 3 of the current paper, we will show, roughly speaking, if $\calC$ is a chordal clutter in the sense of Woodroofe \cite{MR2853065} or in the sense of Emtander \cite{MR2603461}, then the complement clutter is ESS; see our Theorems \ref{W-chordal-ss} and \ref{E-chordal-ss}. Since these two types of chordality are mutually non-comparable, the converses of aforementioned two theorems are not true in general, namely, the complement clutter of ESS clutters are generally not chordal in either sense.

Recently, Bigdeli, Yazdan Pour and Zaare-Nahandi \cite{arXiv:1508.03799} also introduced chordal property for $d$-uniform clutters. This class will be denoted by $\frakC_d$. The edge ideals of the complement clutters of the objects in $\frakC_d$ have linear resolutions over any field, but not necessarily have linear quotients. Meanwhile, we can denote the class of $d$-uniform clutters whose complement clutters are ESS, by CESS. A natural question would be whether CESS is a subclass of $\frakC_d$. An answer for this is pertinent to the two questions in \cite{arXiv:1508.03799}; see our Remark \ref{two-questions}.

In Section 4, we will focus on ESS finite simple graphs. As a consequence of the aforementioned results for chordal clutters, we have a new characterization of chordal graphs in Theorem \ref{ssg and chordal graph}: a graph is chordal if and only if the complement graph is ESS. We will provide in-depth investigation of some of the known characterizations of chordal graphs, all from the perspectives of strong shellability.

Recently, Herzog and Rahimi \cite{arXiv:1508.07119} studied the bi-Cohen-Macaulay (abbreviated as bi-CM) property of finite simple graphs. Recall that a simplicial complex $\Delta$ is called \Index{bi-CM}, if both $\Delta$ and its Alexander dual complex $\Delta^{\vee}$ are CM. A simple graph $G$ is called \Index{bi-CM}, if the Stanley-Reisner complex $\Delta_G$ of the edge ideal $I(G)$ of $G$ is bi-CM.
As a complete classification of all bi-CM graphs seems to be impossible, they gave a classification of all bi-CM graphs up to separation. As they showed in \cite[Theorem 11]{arXiv:1508.07119},
the generic graph $G_T$ of a tree $T$ provides a bi-CM inseparable model. In particular, $G_T$ is bi-CM.

Following the same spirit, we will call a simplicial complex $\Delta$ \Index{bi-strongly shellable} (abbreviated as bi-SS), if both $\Delta$ and its Alexander dual complex $\Delta^{\vee}$ are strongly shellable. Bi-SS graphs can be similarly defined. In the present paper, we will show that the generic graph of a tree is not only bi-CM, but also bi-SS; see Theorem \ref{bi-strongly shellable theorem}.

The final section is devoted to bipartite graphs which have no isolated vertex. As a quick corollary of the characterizations of chordal graphs, a bipartite graph with no isolated vertex is ESS if and only if it is a Ferrers graph. We will reconstruct ESS bipartite graphs by upward sequences in Theorem \ref{ssbg=cbuds}. This new point of view will induce the Ferrers-graph characterization of ESS bipartite graphs which have no isolated vertex.

\section{Strongly shellable complexes}
In this section, we summarize some of the properties related to strongly shellable simplicial complexes, that will be applied in this paper.

Recall that a \Index{matroid complex} $\Delta$ is a simplicial complex whose faces are the independent sets of a matroid. By \cite[Proposition III.3.1]{MR1453579}, this is equivalent to saying that $\Delta$ is a simplicial complex such that for every subset $W\subseteq \calV(\Delta)$, the induced subcomplex $\Delta_W\coloneqq \Set{F\in \Delta|F\subseteq W}$ is pure.

\begin{proposition}
    [{\cite[Proposition 6.3]{SSC}}]
    \label{matroid implies ss}
    Matroid complexes are strongly shellable.
\end{proposition}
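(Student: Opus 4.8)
The plan is to produce an explicit strong shelling order by exploiting the basis exchange axioms. Since a matroid complex $\Delta$ is pure, its facets are exactly the bases of the underlying matroid, all of cardinality equal to the rank $r$; write $\calV(\Delta)=[n]$ and fix the natural linear order $1<2<\cdots<n$ on the ground set. I will order the facets lexicographically: writing each basis in increasing order as a tuple, declare $F<F'$ when, at the first coordinate in which the two increasing tuples differ, $F$ carries the smaller element. Let $F_1,\dots,F_t$ be the resulting enumeration; the claim is that this is a strong shelling order.

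Fix a pair $i<j$, so $F_i<_{\lex}F_j$. First I would locate the element to exchange in: let $s$ be the first position at which the increasingly sorted tuples of $F_i$ and $F_j$ disagree, and let $b^\ast$ be the $s$-th smallest element of $F_i$, so that $b^\ast$ is strictly smaller than the $s$-th smallest element of $F_j$. A short argument shows $b^\ast\in F_i\setminus F_j$ and, more importantly, that every element of $F_j$ lying below $b^\ast$ already belongs to the common prefix $F_i\cap F_j$. Now I invoke the (dual) basis exchange property: for the bases $F_j$ and $F_i$ and the element $b^\ast\in F_i\setminus F_j$, there is an element $a\in F_j\setminus F_i$ such that $F_k:=(F_j\setminus\{a\})\cup\{b^\ast\}$ is again a basis. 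This exchange property I would justify through the fundamental circuit of $b^\ast$ relative to $F_j$: that circuit cannot be contained in the independent set $F_i$, so it meets $F_j\setminus F_i$, supplying the required $a$.

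It remains to check the three defining conditions of a strong shelling for this $F_k$. Because $a\in F_j\setminus F_i$ and $b^\ast\in F_i\setminus F_j$, one reads off directly that $F_i\cap F_j\subseteq F_k\subseteq F_i\cup F_j$ and that $F_j\setminus F_k=\{a\}$, whence $|F_j\setminus F_k|=1$; these are routine set manipulations. The only remaining point is that $F_k$ genuinely precedes $F_j$ in the order, i.e.\ that $k<j$. Here I would use the structural fact above: since no element of $F_j$ below $b^\ast$ lies outside $F_i$, the exchanged element must satisfy $a>b^\ast$. Replacing $a$ by the strictly smaller element $b^\ast\notin F_j$ then lowers the increasing tuple at the first position where a change is visible, giving $F_k<_{\lex}F_j$ and hence $k<j$. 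Assembling these observations over all pairs $i<j$ shows the lexicographic order is a strong shelling order.

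The main obstacle is precisely the coordination between the matroid exchange and the chosen order: the exchange axiom by itself produces an uncontrolled partner $a$, and the whole argument hinges on the observation that the careful choice of $b^\ast$ as the first-difference element forces $a>b^\ast$, which is what makes the new basis lexicographically earlier. Everything else — purity, the interval inclusion $F_i\cap F_j\subseteq F_k\subseteq F_i\cup F_j$, and the single-element difference — is bookkeeping.
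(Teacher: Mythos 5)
Your proof is correct. Note that the paper itself does not prove this proposition at all --- it is imported by citation from \cite[Proposition 6.3]{SSC} --- so there is no internal argument to compare against; your write-up is a valid self-contained substitute. The key steps all check out: $b^{\ast}$, chosen at the first position where the sorted tuples of $F_i$ and $F_j$ differ, lies in $F_i\setminus F_j$, and every element of $F_j$ below $b^{\ast}$ lies in the common prefix, hence in $F_i$; the fundamental circuit $C(b^{\ast},F_j)\subseteq F_j\cup\{b^{\ast}\}$ cannot sit inside the independent set $F_i$, so it supplies $a\in F_j\setminus F_i$ with $F_k=(F_j\setminus\{a\})\cup\{b^{\ast}\}$ a basis (removing an element of the unique circuit of $F_j\cup\{b^{\ast}\}$ leaves an independent set of full rank); and since $a\in F_j\setminus F_i$ while all elements of $F_j$ below $b^{\ast}$ lie in $F_i$, necessarily $a>b^{\ast}$, so $F_j$ and $F_k$ agree below $b^{\ast}$ and $F_k$ carries $b^{\ast}$ at the first differing position, giving $F_k<_{\lex}F_j$, i.e.\ $k<j$. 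Together with the bookkeeping $F_j\setminus F_k=\{a\}$ and $F_i\cap F_j\subseteq F_k\subseteq F_i\cup F_j$, this is exactly the condition required by the paper's definition of a strong shelling order, and it is the natural argument one would expect the cited source to use.
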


Let $\Delta$ be a pure simplicial complex. For arbitrary facets $F$ and $G$, the \Index{distance} between them is defined as $\dis(F,G)\coloneqq |F\setminus G|$, which is certainly $|G\setminus F|$ as well. This function satisfies the usual triangle inequality. Sometimes, we will write it as $\dis_\Delta$ to emphasize the underlying simplicial complex.

\begin{lemma}
[{\cite[Lemma 4.2]{SSC}}]
    \label{d=d-1+1}
    Let $\Delta$ be a pure simplicial complex. Then $\Delta$ is  strongly shellable if and only if there exists a linear order $\succ$ on $\mathcal{F}(\Delta)$, such that whenever $F_i\succ F_j$, there exists a facet $F_k\succ F_j$, such that $\dis(F_k, F_j)=1$ and $\dis(F_i, F_k) = \dis(F_i, F_j)-1$.
\end{lemma}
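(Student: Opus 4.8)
The plan is to show that, once a linear order on $\calF(\Delta)$ is fixed, the admissibility condition appearing in the original definition of strong shellability and the condition stated in the lemma pick out exactly the same orders; consequently one such order exists if and only if the other does. Since the orientation of $\succ$ is set up so that ``$F_k\succ F_j$'' corresponds to ``$F_k$ precedes $F_j$'' (the ``$k<j$'' of the definition) and ``$F_i\succ F_j$'' to ``$i<j$'', the two formulations impose their requirements on the very same pairs of facets, so the matching is automatic. The whole lemma therefore reduces to a pointwise equivalence: for facets $F_i,F_j,F_k$ with $\dis(F_k,F_j)=1$ (equivalently, by purity, $|F_j\setminus F_k|=1$),
\[
F_i\cap F_j\subseteq F_k\subseteq F_i\cup F_j
\quad\Longleftrightarrow\quad
\dis(F_i,F_k)=\dis(F_i,F_j)-1.
\]

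To prove this equivalence I would first use $\dis(F_k,F_j)=1$ together with purity to write $F_k=(F_j\setminus\{a\})\cup\{b\}$ with $a\in F_j\setminus F_k$, $b\in F_k\setminus F_j$ and $a\neq b$; purity is what guarantees that precisely one element is deleted and one adjoined. I would then track the effect of $a$ and $b$ on the two differences $F_i\setminus F_j$ and $F_i\setminus F_k$. Every $x\in F_i$ with $x\notin\{a,b\}$ lies in $F_j$ exactly when it lies in $F_k$, so it contributes equally to both; the element $a$ (when it lies in $F_i$) sits in $F_j\setminus F_k$, and $b$ (when it lies in $F_i$) sits in $F_k\setminus F_j$. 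Writing $[P]=1$ when $P$ holds and $[P]=0$ otherwise, this bookkeeping gives the identity
\[
\dis(F_i,F_k)=\dis(F_i,F_j)+[a\in F_i]-[b\in F_i],
\]
so the right-hand condition $\dis(F_i,F_k)=\dis(F_i,F_j)-1$ holds precisely when $a\notin F_i$ and $b\in F_i$.

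It then remains to read off the containment condition in the same terms. Unwinding $F_k=(F_j\setminus\{a\})\cup\{b\}$: the inclusion $F_k\subseteq F_i\cup F_j$ reduces to requiring $b\in F_i$, since $b$ is the only element of $F_k$ not already lying in $F_j$; and $F_i\cap F_j\subseteq F_k$ reduces to requiring $a\notin F_i$, since $a$ is the unique element of $F_j$ missing from $F_k$ (and $a\in F_j$). Hence the left-hand condition is also equivalent to ``$a\notin F_i$ and $b\in F_i$'', which closes the pointwise equivalence and therefore the lemma.

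I do not anticipate a genuine obstacle: the argument is elementary once purity is invoked to pin down the cardinalities of the symmetric differences. The single point requiring care is the bookkeeping step, where I must be certain that $a$ and $b$ are the \emph{only} elements whose membership in $F_j$ and in $F_k$ can differ; this is exactly what the hypotheses $\dis(F_k,F_j)=1$ and purity guarantee, and without purity the equality $|F_j\setminus F_k|=|F_k\setminus F_j|$ (hence the clean ``remove one, add one'' description) could fail.
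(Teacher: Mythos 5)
Your proof is correct, and there is nothing in this paper to compare it against: the lemma is stated here with a citation to \cite{SSC} and no proof is given in the text, so the verification has to stand on its own. It does---your reading of $\succ$ as ``precedes'' matches how the paper actually applies the lemma (e.g.\ in the proof of Theorem \ref{bi-strongly shellable theorem}), and your reduction to the pointwise equivalence, writing $F_k=(F_j\setminus\{a\})\cup\{b\}$ (legitimate by purity) and showing that the containment condition $F_i\cap F_j\subseteq F_k\subseteq F_i\cup F_j$ and the distance condition $\dis(F_i,F_k)=\dis(F_i,F_j)-1$ are each equivalent to ``$a\notin F_i$ and $b\in F_i$,'' is exactly the natural argument and is carried out without gaps.
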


For a pure simplicial complex $\Delta$, its \Index{complement complex} $\Delta^c$ has the facet set $\calF(\Delta^c)=\Set{F^c: F\in \calF(\Delta)}$, where $F^c\coloneqq\calV(\Delta)\setminus F$. The strong shellabilities of $\Delta$ and $\Delta^{c}$ have the following relation:

\begin{lemma}
    [{\cite[Lemma 4.10]{SSC}}]
    \label{complement shellable}
    A pure simplicial complex $\Delta$ is strongly shellable if and only if its complement complex $\Delta^{c}$ has the same property.
\end{lemma}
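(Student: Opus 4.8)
The plan is to reduce everything to the distance criterion of Lemma~\ref{d=d-1+1} and to exploit the fact that complementation is a distance-preserving bijection between the facets of $\Delta$ and those of $\Delta^c$. First I would record that $\Delta^c$ is again pure: if $\Delta$ is pure with every facet of cardinality $d$, then writing $n=|\calV(\Delta)|$, each facet $F^c=\calV(\Delta)\setminus F$ has cardinality $n-d$, so Lemma~\ref{d=d-1+1} is indeed applicable to $\Delta^c$ as well.

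The heart of the argument is the identity $\dis_{\Delta^c}(F^c,G^c)=\dis_\Delta(F,G)$ for any two facets $F,G$ of $\Delta$. To see this I would compute $F^c\setminus G^c=(\calV(\Delta)\setminus F)\cap G=G\setminus F$, whence $|F^c\setminus G^c|=|G\setminus F|$; since $\Delta$ is pure we have $|F|=|G|$ and therefore $|G\setminus F|=|F\setminus G|=\dis_\Delta(F,G)$. The only place where care is needed is precisely this last step, where purity is used to pass from $|G\setminus F|$ to $|F\setminus G|$; this is also the reason the statement is restricted to pure complexes.

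With the distance identity in hand, the rest is bookkeeping. Suppose $\Delta$ is strongly shellable and fix an order $\succ$ on $\calF(\Delta)$ as furnished by Lemma~\ref{d=d-1+1}. I would transport it to $\calF(\Delta^c)$ via the complementation bijection, declaring $F^c\succ' G^c$ exactly when $F\succ G$; since $F\mapsto F^c$ is a bijection on facets, $\succ'$ is a linear order. Now if $F_i^c\succ' F_j^c$, i.e.\ $F_i\succ F_j$, then Lemma~\ref{d=d-1+1} yields a facet $F_k\succ F_j$ with $\dis_\Delta(F_k,F_j)=1$ and $\dis_\Delta(F_i,F_k)=\dis_\Delta(F_i,F_j)-1$. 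Applying the distance identity three times gives $F_k^c\succ' F_j^c$ with $\dis_{\Delta^c}(F_k^c,F_j^c)=1$ and $\dis_{\Delta^c}(F_i^c,F_k^c)=\dis_{\Delta^c}(F_i^c,F_j^c)-1$, so $\succ'$ satisfies the criterion of Lemma~\ref{d=d-1+1} and $\Delta^c$ is strongly shellable.

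Finally, the converse requires no new work: since $(F^c)^c=F$ we have $(\Delta^c)^c=\Delta$, so applying the implication just proved to $\Delta^c$ in place of $\Delta$ shows that strong shellability of $\Delta^c$ forces that of $\Delta$. Thus the main (and essentially only) obstacle is the verification of the distance identity; once that is isolated, Lemma~\ref{d=d-1+1} does all of the remaining work, and no direct manipulation of the original strong-shelling condition (with its clause $F_i\cap F_j\subseteq F_k\subseteq F_i\cup F_j$) is needed.
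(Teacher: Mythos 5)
Your proof is correct. Note first that the paper itself contains no argument for Lemma \ref{complement shellable}: it is imported wholesale from the companion paper \cite[Lemma 4.10]{SSC}, so there is no in-text proof to compare against; your write-up supplies the missing details, and it does so along the natural route. The key points are all in order: $\Delta^c$ is pure of dimension $|\calV(\Delta)|-d-1$; the computation $F^c\setminus G^c=(\calV(\Delta)\setminus F)\cap G=G\setminus F$ together with purity gives $\dis_{\Delta^c}(F^c,G^c)=\dis_\Delta(F,G)$; transporting the order through the facet bijection and invoking the criterion of Lemma \ref{d=d-1+1} then yields one implication, and the involution $(\Delta^c)^c=\Delta$ yields the other. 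Two small remarks. First, the detour through Lemma \ref{d=d-1+1} is convenient but not forced: the original definition of strong shellability transfers just as directly, since $F_j\setminus F_k=F_k^c\setminus F_j^c$ shows $|F_j\setminus F_k|=1$ if and only if $|F_k^c\setminus F_j^c|=1$ (purity symmetrizes the two set differences), and taking complements in $F_i\cap F_j\subseteq F_k\subseteq F_i\cup F_j$ gives exactly $F_i^c\cap F_j^c\subseteq F_k^c\subseteq F_i^c\cup F_j^c$; so the ``sandwich'' clause is in fact self-dual under complementation, and the definition-level proof is no longer than yours. Second, your converse silently uses that $\Delta^c$ is regarded as a complex on the same ground set $\calV(\Delta)$ --- otherwise, if some vertex lay in every facet of $\Delta$, it would disappear from the facets of $\Delta^c$ and $(\Delta^c)^c$ need not recover $\Delta$. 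This is certainly the intended reading of the definition in the paper, but since your whole converse rests on the involution, it is worth one explicit sentence.
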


Let $S=\KK[x_1,\dots,x_n]$ be a polynomial ring over a field $\KK$ and $I$ a graded proper ideal.  Recall that $I$ has \Index{linear quotients}, if there exists a system of homogeneous generators $f_1 ,f_2 ,\dots,f_m$ of $I$ such that the colon ideal $\braket{f_1 ,\dots,f_{i-1}} : f_i$ is generated by linear forms for all $i$. If $I$ has linear quotients, then $I$ is componentwise linear; see \cite[Theorem 8.2.15]{MR2724673}. In particular, if $I$ has linear quotients and can be generated by forms of degree $d$, then it has a $d$-linear resolution; see \cite[Proposition 8.2.1]{MR2724673}.
Another important result in \cite{SSC} is that:

\begin{theorem}
    [{\cite[Theorem 4.11]{SSC}}]
    \label{linear quotients}
    If $\Delta$ is a pure strongly shellable complex, then the facet ideal $I(\Delta)$ has linear quotients. In particular, $I(\Delta)$ has a linear resolution over any field.
\end{theorem}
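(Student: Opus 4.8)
Write $x_F\coloneqq\prod_{v\in F}x_v$ for a face $F$, so that the facet ideal is $I(\Delta)=\braket{x_{F}: F\in\calF(\Delta)}$, and by purity every generator has the same degree $d=\dim(\Delta)+1$. The plan is to show that a strong shelling order of the facets, used verbatim as an ordering of the generators of $I(\Delta)$, already exhibits linear quotients. So fix a strong shelling order $F_1,\dots,F_m$ and recall the standard fact that for monomial ideals the colon ideal $\braket{x_{F_1},\dots,x_{F_{j-1}}}:x_{F_j}$ is generated by the quotient monomials $x_{F_i}:x_{F_j}=x_{F_i\setminus F_j}$ for $i<j$. The whole task is therefore to replace each such monomial generator by a variable lying in the same colon ideal.

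The key step is to feed the strong shelling condition into this computation. Fix $j$ and an arbitrary $i<j$. By the definition of strong shelling there is a $k<j$ with $|F_j\setminus F_k|=1$ and $F_i\cap F_j\seq F_k\seq F_i\cup F_j$. Here purity does the essential work: since $|F_k|=|F_j|=d$, the equality $|F_j\setminus F_k|=1$ upgrades to $|F_k\setminus F_j|=1$, say $F_k\setminus F_j=\{w\}$. Consequently $x_{F_k}:x_{F_j}=x_{F_k\setminus F_j}=x_w$ is a single variable, and it lies in $\braket{x_{F_1},\dots,x_{F_{j-1}}}:x_{F_j}$ because $k<j$.

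It remains to see that this variable divides the generator $x_{F_i\setminus F_j}$ it is meant to replace. From $w\in F_k\seq F_i\cup F_j$ together with $w\notin F_j$ we get $w\in F_i\setminus F_j$, so $x_w\mid x_{F_i\setminus F_j}$. As $i<j$ was arbitrary, every monomial generator of the colon ideal is divisible by a variable that is again in the colon ideal, so $\braket{x_{F_1},\dots,x_{F_{j-1}}}:x_{F_j}$ is generated by variables. This is exactly linear quotients. Finally, since the generators are all of degree $d$, linear quotients yields a $d$-linear resolution by \cite[Proposition 8.2.1]{MR2724673}; the construction is by iterated mapping cones and is insensitive to the field, giving the resolution over any $\KK$.

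I expect the only delicate point to be the bookkeeping in the middle paragraph, namely that the purity hypothesis is genuinely needed to pass from $|F_j\setminus F_k|=1$ to $|F_k\setminus F_j|=1$ (so that the colon produces a variable rather than a higher-degree monomial), and that the sandwiching $F_k\seq F_i\cup F_j$ is precisely what forces the new variable $w$ into $F_i\setminus F_j$. Everything else is the routine dictionary between monomial colon ideals and set differences.
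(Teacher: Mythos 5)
Your proof is correct: the strong shelling order itself serves as the linear-quotients order, since purity upgrades $|F_j\setminus F_k|=1$ to $|F_k\setminus F_j|=\{w\}$ being a singleton, and the sandwich condition $F_i\cap F_j\subseteq F_k\subseteq F_i\cup F_j$ forces $w\in F_i\setminus F_j$, so each generator $x_{F_i\setminus F_j}$ of the colon ideal is divisible by the variable $x_w=x_{F_k}:x_{F_j}$ lying in that same colon ideal. This is essentially the approach of the cited source \cite[Theorem 4.11]{SSC} (the present paper only quotes the result without reproducing its proof), so nothing further is needed.
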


Recently, Moradi and Khosh-Ahang \cite{arXiv:1601.00456} considered the expansion of simplicial complexes.
    Let $\Delta$ be a simplicial complex with the vertex set $\calV(\Delta)=\Set{x_1,\dots,x_n}$ and $s_1,\dots,s_n$ be arbitrary positive integers.  The \Index{$(s_1,\dots,s_n)$-expansion} of $\Delta$, denoted by $\Delta^{(s_1,\dots,s_n)}$, is the simplicial complex with the vertex set $\Set{x_{i,j}\mid 1\le i\le n, 1\le j\le s_i}$ and the facet set
    \[
    \Set{ \{x_{i_1,r_1},\dots,x_{i_{t},r_{t}}\} \mid \{x_{i_1},\dots,x_{i_t}\}\in \mathcal{F}(\Delta) \text{ and } (r_1,\dots,r_{t})\in [s_{i_1}]\times \cdots \times [s_{i_{t}}]}.
    \]

Now, we wrap up this section with the following strongly shellable version of \cite[Corollary 2.15]{arXiv:1511.04676}.

\begin{theorem}
    [{\cite[Theorem 2.18]{SSC}}]
    \label{expansion-complex}
    Assume that $s_1, \dots, s_n$ are positive integers. Then $\Delta$ is strongly shellable if and only if  $\Delta^{(s_1,\dots,s_n)}$ is so.
\end{theorem}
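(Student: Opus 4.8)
The plan is to reduce the biconditional to the single elementary operation of \emph{doubling one vertex}, and then to check that this operation preserves strong shellability in each direction, working directly from the definition of a strong shelling order (so that no purity hypothesis is ever invoked).

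\medskip

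First I would observe that $\Delta^{(s_1,\dots,s_n)}$ is built from $\Delta$ by a finite sequence of elementary doublings. Given a simplicial complex $\Sigma$ whose vertex set contains $v$, define its \emph{doubling} $\Sigma'$ by adjoining a new vertex $v'$ and setting $\calF(\Sigma')=\calF(\Sigma)\cup\{(F\setminus\{v\})\cup\{v'\} : F\in\calF(\Sigma),\ v\in F\}$. After relabelling $x_i$ as $x_{i,1}$, duplicating $x_{i,1}$ exactly $s_i-1$ times produces $s_i$ parallel copies $x_{i,1},\dots,x_{i,s_i}$, and performing this for each $i$ yields precisely the facet set of $\Delta^{(s_1,\dots,s_n)}$; this is a direct check. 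By transitivity of the biconditional it therefore suffices to prove that $\Sigma$ is strongly shellable if and only if its doubling $\Sigma'$ is. I write $\pi$ for the map that collapses $v'$ back to $v$ and fixes every other vertex, so that $\calF(\Sigma)=\{H\in\calF(\Sigma') : v'\notin H\}$ and $\pi$ carries facets of $\Sigma'$ onto facets of $\Sigma$ (one verifies that both $G$ and $(G\setminus\{v\})\cup\{v'\}$ remain \emph{facets}, i.e.\ maximal, in $\Sigma'$).

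\medskip

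The reverse implication is the easy one. Assuming $\Sigma'$ strongly shellable, I would restrict one of its strong shelling orders to the facets that avoid $v'$, which are exactly the facets of $\Sigma$. For two such facets $G_a\prec G_b$, the shelling of $\Sigma'$ provides a witness $H$ preceding $G_b$ with $|G_b\setminus H|=1$ and $G_a\cap G_b\subseteq H\subseteq G_a\cup G_b$. The crucial observation is that $G_a\cup G_b$ contains no $v'$, so the upper inclusion forces $v'\notin H$; hence $H$ is already a facet of $\Sigma$ and still precedes $G_b$ in the restricted order. Thus the inherited order is a strong shelling of $\Sigma$.

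\medskip

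For the forward implication I would start from a strong shelling $G_1,\dots,G_m$ of $\Sigma$ and list the facets of $\Sigma'$ by scanning $a=1,\dots,m$, emitting $G_a$ alone when $v\notin G_a$ and the consecutive pair $G_a,\ (G_a\setminus\{v\})\cup\{v'\}$ when $v\in G_a$. To verify the condition for a pair $H\prec H'$, I compare their images $\pi H=G_a$ and $\pi H'=G_b$; the emission rule forces $a\le b$. If $a=b$ the pair is $H=G_a$, $H'=(G_a\setminus\{v\})\cup\{v'\}$, and $G_a$ itself is the required witness. If $a<b$ I would lift the $\Sigma$-witness $G_c$ (with $c<b$) of the pair $(G_a,G_b)$ to a facet $H''$ of $\Sigma'$, choosing its copy of $v$ to agree with that of $H'$ whenever $v$ lies in both $G_b$ and $G_c$; since $c<b$, this $H''$ precedes $H'$. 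Because $|H'\setminus H''|\ge|\pi H'\setminus\pi H''|=|G_b\setminus G_c|=1$, the entire point of the copy-matching is to prevent the distance from exceeding $1$, while the sandwich $G_a\cap G_b\subseteq G_c\subseteq G_a\cup G_b$ lifts verbatim on the vertices untouched by $\pi$ and the chosen $v$-copy is positioned to satisfy both inclusions. I expect this copy bookkeeping — a case analysis according to whether $v$ belongs to $G_b$, to $G_c$, to both, or to neither, confirming in each case that one can realize $|H'\setminus H''|=1$ together with $H\cap H'\subseteq H''\subseteq H\cup H'$ — to be the only genuine obstacle; everything else is routine.
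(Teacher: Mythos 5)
The paper itself offers no proof of this statement --- it is imported wholesale from \cite[Theorem 2.18]{SSC} --- so your argument has to stand on its own; it almost does. Your reduction of the expansion to a finite sequence of single-vertex doublings is correct (the doubled facet set is again an antichain, and iterating the doubling over all copies of all vertices produces exactly $\calF(\Delta^{(s_1,\dots,s_n)})$), and your reverse direction is complete: since $G_a\cup G_b$ avoids $v'$, the upper inclusion forces the witness to avoid $v'$, so restricting a strong shelling order of $\Sigma'$ to the facets of $\Sigma$ works verbatim.

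The forward direction, however, stops at exactly the step that carries the content of the theorem, and the one rule you do state is not enough to finish it. Write $\widetilde{G}_c=(G_c\setminus\{v\})\cup\{v'\}$. Your rule (match the copy of $v$ used by $H'$ whenever $v\in G_b\cap G_c$) is correct where it applies, but: (i) when $v\in G_c\setminus G_b$, the witness must match the copy used by $H$, not $H'$ --- for instance if $H=\widetilde{G}_a$ and $H'=G_b$, then $G_c$ violates the upper inclusion $H''\subseteq H\cup H'$ because $v\notin H\cup H'$, and one must take $\widetilde{G}_c$ instead; note that $v\in G_c\subseteq G_a\cup G_b$ forces $v\in G_a$ here, so $H$ does single out a copy. (ii) When $H=\widetilde{G}_a$ and $H'=\widetilde{G}_b$, the witness must contain $v'$, since $v'\in H\cap H'$, so it must be $\widetilde{G}_c$; that this lift exists at all requires the observation $v\in G_a\cap G_b\subseteq G_c$, which is nowhere in your write-up. (iii) When $v\notin G_c$ and $H'=\widetilde{G}_b$, the distance count $|\widetilde{G}_b\setminus G_c|=1$ holds only because $v\in G_b\setminus G_c$ forces $G_b\setminus G_c=\{v\}$, so the lost $v$ is exactly compensated by the new $v'$. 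With (i)--(iii) in hand, every case of your analysis does close and your interleaved order is indeed a strong shelling order; but these three facts \emph{are} the proof, and ``I expect this copy bookkeeping to work'' does not discharge them --- in particular, a reader following only the rule you stated could make a choice in case (i) for which the sandwich condition fails.
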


\section{Edgewise strongly shellable clutters}
Recall that a \Index{clutter} $\calC$  with a finite \Index{vertex set} $\calV(\calC)$ consists of a collection $\calE(\calC)$ of nonempty subsets of $\calV(\calC)$, called \Index{edges}, none of which is included in another. Clutters are also known as \Index{finite simple hypergraphs}.
A \Index{subclutter} $\calK$ of $\calC$ is a clutter such that $\calV(\calK)\subseteq \calV(\calC)$ and $\calE(\calK)\subseteq \calE(\calC)$.  If $U\subseteq \calV(\calC)$, the \Index{induced subclutter} on $U$, $\calC_U$, is the subclutter with $\calV(\calC_U)=U$ and with $\calE(\calC_U)$ consisting of all edges of $\calC$ that lie entirely in $U$.
A clutter $\calC$ is called \Index{$d$-uniform} if $\left|e\right|=d$ for each $e\in \calE(\calC)$.

If $\calC$ is a clutter, we can remove a vertex $v$ in the following two ways.
\begin{enumerate}[a]
    \item The \Index{deletion} $\calC\setminus v$ is the clutter with $\calV(\calC\setminus v)=\calV(\calC)\setminus \Set{v}$ and with
        $$\calE(\calC\setminus v)=\Set{e\in \calE(\calC): v\notin e}.$$
    \item The \Index{contraction} $\calC/v$ is the clutter with $\calV(\calC/v)=\calV(\calC)\setminus \Set{v}$ and with edges the minimal sets of $\Set{e\setminus \Set{v}: e\in \calE(\calC)}$ with respect to inclusion.
\end{enumerate}
Thus, \Index{induced subclutters} are obtained by repeated deletions.
A clutter $\calD$ obtained from $\calC$ by repeated deletions and/or contractions is called a \Index{minor} of $\calC$.

Let $d\le n$ be two positive integers and $V$ a set of cardinality $n$. A subset $S\subseteq \binom{V}{d}$ is called \Index{strongly shellable} if the unique complex $\Delta$ over $V$ whose facet set is $S$, is strongly shellable.
If $\calC$ is a uniform clutter such that the edge set $\calE(\calC)$ is strongly shellable, we say $\calC$ is \Index{edgewise strongly shellable}, abbreviated as ESS.

For a clutter $\calC$, the \Index{vertex-complement clutter} $\calC^{vc}$ is the clutter on $\calV(\calC)$ such that
\[
\calE(\calC^{vc})=\Set{\calV(\calC)\setminus E: E\in \calE(\calC)}.
\]
On the other hand, for a $d$-uniform clutter $\calC$, the \Index{edge-complement clutter} $\calC^{ec}$ is the clutter on $\calV(\calC)$ such that
\[
\calE(\calV^{ec})=\{E: \left|E\right|=d \text{ and } E\notin \calE(\calC)\}.
\]

The following lemma is a re-statement of Lemma \ref{complement shellable}:

\begin{lemma}
    \label{vc}
    Let $\calC$ be a uniform clutter. Then $\calC$ is ESS if and only if its vertex-complement clutter $\calC^{vc}$ is ESS.
\end{lemma}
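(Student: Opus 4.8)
The plan is to reduce the statement directly to Lemma~\ref{complement shellable} through the dictionary between uniform clutters and pure simplicial complexes, since the excerpt already advertises this lemma as a mere re-statement. First I would attach to $\calC$ its natural complex: because $\calC$ is $d$-uniform, the edge set $\calE(\calC)$ consists of $d$-subsets of $V\coloneqq\calV(\calC)$, none contained in another, so $\calE(\calC)$ is precisely the facet set of a pure $(d-1)$-dimensional complex $\Delta$ on $V$. By the very definition of ESS, $\calC$ is ESS if and only if $\Delta$ is strongly shellable.

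Next I would identify the complex attached to the vertex-complement clutter. Writing $n\coloneqq|V|$, every edge of $\calC^{vc}$ has the form $V\setminus E$ with $E\in\calE(\calC)$, so $\calC^{vc}$ is $(n-d)$-uniform; in particular all its edges share the cardinality $n-d$, whence none is contained in another and distinct edges of $\calC$ produce distinct edges of $\calC^{vc}$. Thus $\calC^{vc}$ is again a uniform clutter, and I would check that its edge set coincides with the facet set of the complement complex $\Delta^{c}$: indeed $\calF(\Delta^{c})=\Set{F^{c}:F\in\calF(\Delta)}$ with $F^{c}=V\setminus F$ is exactly $\Set{V\setminus E:E\in\calE(\calC)}=\calE(\calC^{vc})$. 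Hence $\calC^{vc}$ is ESS precisely when $\Delta^{c}$ is strongly shellable.

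With both translations in place, the equivalence is immediate: Lemma~\ref{complement shellable} asserts that $\Delta$ is strongly shellable if and only if $\Delta^{c}$ is, and chaining this with the two identifications yields that $\calC$ is ESS if and only if $\calC^{vc}$ is ESS. I do not expect any genuine obstacle here; the single point meriting care is the verification that vertex-complementation of a $d$-uniform clutter yields exactly the complement complex of the associated pure complex, as it is this bookkeeping that lets Lemma~\ref{complement shellable} apply verbatim rather than requiring any new argument about strong shelling orders.
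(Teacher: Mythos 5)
Your proposal is correct and takes essentially the same route as the paper: the paper offers no separate argument, stating only that Lemma~\ref{vc} is a re-statement of Lemma~\ref{complement shellable}, and your verification that $\calE(\calC^{vc})$ coincides with $\calF(\Delta^{c})$ under the clutter--complex dictionary is exactly the bookkeeping implicit in that remark.
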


If $\calC$ is a clutter, an \Index{independent set} of $\calC$ is a subset of $\calV(\calC)$ containing no edge of $\calC$. The \Index{independence complex} is
\[
\calI(\calC)\coloneqq\Set{F\subseteq \calV(\calC): \text{ $F$ is an independent set of $\calC$}}.
\]
Fix an integer $d$, which will usually be the minimum edge cardinality of $\calC$. Let $c_d(\calC)$ be the clutter such that $\calV(c_d(\calC))=\calV(\calC)$ and
\[
\calE(c_d(\calC))=\{e\subset \calV(\calC): |e|=d \text{ and }e \notin \calE(\calC)\}.
\]
Edges of $c_d(\calC)$ will be referred to as \Index{$d$-non-edges} of $\calC$.
In the special case that $\calC$ is $d$-uniform, $c_d(\calC)$ is exactly the edge-complement clutter of $\calC$.
In general, when $\calC$ is not necessarily uniform, it is not difficult to see that $\calI(c_d(\calC))^\vee$ is the pure simplicial complex whose facet set is
\[
\Set{\calV(\calC)\setminus e: e\in c_d(\calC)}.
\]
Hence by Lemma \ref{vc}, $\calI(c_d(\calC))^\vee$ is strongly shellable if and only if $c_d(\calC)$ is ESS. One also observes that if $\calC$ is $d$-uniform, then $\calF(\calI(c_d(\calC))^\vee)=\calE((\calC^{ec})^{vc})$.

In this section, we will deal with the strong shellability of $\calI(c_d(\calC))^\vee$ when $\calC$ is a chordal clutter. The chordality of finite simple graphs is an important topic in graph theory and commutative algebra. There are many different characterizations; see, for instance, the discussions in \cite{MR0130190, MR2724673}. There are also many non-equivalent generalizations of chordality to higher dimensions. For instance, in the following, we will study the two different chordalities introduced by Woodroofe in \cite{MR2853065} and by Emtander in \cite{MR2603461} respectively. For their relationship, please refer to \cite[Example 4.8]{MR2853065}.

\subsection{Woodroofe's chordality}
Let $\calC$ be a clutter. A vertex $v$ of $\calC$ is called \Index{simplicial} if for every two distinct edges $e_1$ and $e_2$ of $\calC$ that contain $v$, there is a third edge $e_3$ such that $e_3\subseteq (e_1\cup e_2)\setminus \Set{v}$.
A clutter $\calC$ is called \Index{W-chordal} if every minor of $\calC$ has a simplicial vertex.

Recall that
given a simplicial complex $\Delta$ and a vertex $v$, the \Index{deletion} complex $\Delta\setminus v$  is the simplicial complex
\[
\Delta\setminus v\coloneqq\Set{F\in\Delta\mid v\not \in F},
\]
and the \Index{link} complex $\link_\Delta(v)$ is the simplicial complex
\[
\link_\Delta(v)\coloneqq\Set{F\in \Delta\mid v\notin F \text{ and }\{v\}\cup F \in \Delta}.
\]
On the other hand, a vertex $v$ of a simplicial complex $\Delta$ is called a \Index{shedding vertex} if no facet of $\link_\Delta v$ is a facet of $\Delta\setminus v$.

\begin{theorem}
    \label{W-chordal-ss}
    If $\calC$ is a W-chordal clutter with minimum edge cardinality $d$, then $\calI(c_d(\calC))^\vee$ is strongly shellable.
\end{theorem}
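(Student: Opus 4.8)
The plan is to prove the equivalent assertion that the $d$-uniform clutter $c_d(\calC)$ is ESS; by the discussion preceding the theorem together with Lemma~\ref{vc}, this amounts to showing that the pure complex $\Gamma$ whose facets are the $d$-non-edges of $\calC$ is strongly shellable. I would argue by induction on $|\calV(\calC)|$, strengthening the inductive claim to: \emph{for every W-chordal clutter $\calC$ and every integer $d$ with $1\le d\le$ (minimum edge cardinality of $\calC$), the complex $\Gamma=\Gamma_d(\calC)$ of $d$-non-edges is strongly shellable}. The extra freedom in $d$ is exactly what lets the contraction step below feed back into the induction. As base cases, when $\calC$ has no edges $\Gamma$ is the full $(d-1)$-skeleton of the simplex on $\calV(\calC)$, i.e. a uniform matroid complex, so Proposition~\ref{matroid implies ss} applies; the degenerate situations (at most one facet, or $d=1$, where $\Gamma$ is zero-dimensional and all distances equal one) are trivially strongly shellable. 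For the inductive step, since $\calC$ is W-chordal it has a simplicial vertex $v$, and I split the facets of $\Gamma$ into the non-edges containing $v$ (type~C) and those avoiding $v$ (type~D).

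Each block is governed by a minor, which is again W-chordal since a minor of a minor is a minor. The type-D facets are precisely the $d$-non-edges of the deletion $\calC\setminus v$, whose minimum edge cardinality is still at least $d$, so the induction hypothesis yields a strong shelling order of the type-D block. For the type-C facets I would analyze the deletion-link $\link_\Gamma(v)$, whose facets are the $(d-1)$-sets $f\subseteq\calV(\calC)\setminus\{v\}$ with $\{v\}\cup f\notin\calE(\calC)$. Using that $d$ is at most the minimum edge cardinality (so no edge of $\calC$ can lie strictly inside a $d$-set), one checks the clean identity $\link_\Gamma(v)=\Gamma_{d-1}(\calC/v)$, i.e. these are exactly the $(d-1)$-non-edges of the contraction $\calC/v$. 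Since $\calC/v$ is W-chordal with fewer vertices and minimum edge cardinality at least $d-1$, the induction hypothesis gives a strong shelling $g_1,g_2,\dots$ of $\Gamma_{d-1}(\calC/v)$; adjoining $v$ to every facet preserves all pairwise distances and hence transports the criterion of Lemma~\ref{d=d-1+1} verbatim, so $\{v\}\cup g_1,\{v\}\cup g_2,\dots$ strongly shells the type-C block.

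I would then take the global order to list the type-C block first and the type-D block afterwards, each in its order above, and verify Lemma~\ref{d=d-1+1}. For two facets inside one block the required bridge comes from that block's order and stays inside the block, hence remains earlier than the later facet in the global order. The reverse mixed case never arises, as type~D never precedes type~C. The only genuinely new case is a type-C facet $F_i$ preceding a type-D facet $F_j$: here I would seek $a\in F_j\setminus F_i$ with $(F_j\setminus\{a\})\cup\{v\}$ still a non-edge, which is then a type-C bridge (earlier than $F_j$) at distance one from $F_j$ and one step closer to $F_i$.

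The crux, and the step I expect to be the main obstacle, is producing that bridge from the simplicial property of $v$. If every $a\in F_j\setminus F_i$ gave an edge $(F_j\setminus\{a\})\cup\{v\}$, then (once $\dis(F_i,F_j)\ge 2$, the only case needing work) choosing two distinct such $a,a'$ yields two distinct edges through $v$ whose union, with $v$ removed, is exactly $F_j$; simplicialness forces an edge $e_3\subseteq F_j$, and since $|e_3|\ge d=|F_j|$ this edge equals $F_j$, contradicting that $F_j$ is a non-edge. This is precisely where the hypothesis that $d$ is at most the minimum edge cardinality is indispensable, both for this exchange and for the link–contraction identity; otherwise a proper sub-edge of $F_j$ could obstruct the argument. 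Assembling the three cases completes the verification through Lemma~\ref{d=d-1+1}, and the reduction at the outset returns the strong shellability of $\calI(c_d(\calC))^\vee$.
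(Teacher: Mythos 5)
Your proposal is correct and follows essentially the same route as the paper's proof: induction on the vertex set using a simplicial vertex $v$, splitting the $d$-non-edges into those containing $v$ (governed by the contraction $\calC/v$) and those avoiding $v$ (governed by the deletion $\calC\setminus v$), placing the former block first, and producing the mixed-case bridge by the identical exchange argument from simplicialness of $v$ together with the minimality of $d$. The only difference is presentational: you work with the clutter $c_d(\calC)$ itself rather than with $\calI(c_d(\calC))^\vee$ (equivalent via Lemma \ref{vc}), which is precisely the complementation the paper performs anyway when verifying its shelling order, and which lets you avoid invoking the shedding-vertex lemma.
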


\begin{proof}
    We proceed by induction, with base cases as follows.
    \begin{enumerate}[i]
        \item If $c_d(\calC)$ has no edge, then $\calI(c_d(\calC))^\vee$ is the degenerate complex $\{\}$. There is nothing to show here.
        \item If $d=1$ and there is an edge in $c_d(\calC)$, the clutter $c_d(\calC)$ is trivially ESS. Therefore, $\calI(c_d(\calC))^\vee$ is strongly shellable.
    \end{enumerate}

    For $d\ge 2$, let $v$ be a simplicial vertex of $\calC$. By the proof of \cite[Theorem 6.9]{MR2853065}, we have the following two key observations.
    \begin{enumerate}[a]
        \item \label{part-a} The link
            \[
            \link_{\calI(c_d(\calC))^\vee} (v)=\calI(c_d(\calC)\setminus v)^\vee = \calI(c_d(\calC\setminus v))^\vee.
            \]
            As $\calC\setminus v$ is W-chordal by definition, and  has minimum edge cardinality at least $d$, the complex $\calI(c_d(\calC\setminus v))^\vee$ is either degenerate or by induction strongly shellable.
        \item \label{part-b} The deletion
            \[
            \calI(c_d(\calC))^\vee\setminus v= \calI(c_d(\calC)/v)^\vee=\calI(c_{d-1}(\calC/v))^\vee. \label{eqn:deletion}
            \]
            As $\calC/v$ is W-chordal by definition, and has minimum edge cardinality at least $d-1$, the complex $\calI(c_{d-1}(\calC/v))^\vee$ is either degenerate or by induction strongly shellable.
    \end{enumerate}

    If $\calC\setminus v$ has no non-edge of cardinality $d$, then the vertex $v$ is indeed contained in every edge of $c_d(\calC)$, hence in no facet of $\calI(c_d(\calC))^\vee$. In this case, $\calI(c_d(\calC))^\vee=\calI(c_d(\calC))^\vee \setminus v$, which by part \ref{part-b} is strongly shellable.

    Otherwise, $\calC\setminus v$ has at least one non-edge of cardinality $d$. Then $v$ is a shedding vertex in $\calI(c_d(\calC))^\vee$ by \cite[Lemma 6.8]{MR2853065}. Let $F_1,F_2,\dots,F_m$ be a strong shelling order of the facets of $\link_{\calI(c_d(\calC))^\vee} (v)$ in part \ref{part-a}. Likewise, let $G_1,G_2,\dots,G_n$ be a strong shelling order of the facets of  $\calI(c_d(\calC))^\vee\setminus v$  in part \ref{part-b}. As $v$ is a shedding vertex,  the facet set of $\calI(c_d(\calC))^\vee$ consists of
    \[
    G_1,\dots,G_n, F_1\cup \Set{v},\dots,F_m\cup\Set{v}.
    \]
    We claim that this is a strong shelling order.

    It suffice to show that for arbitrary $i\in[n]$ and $j\in[m]$, we can find suitable $i'\in[n]$ such that
    \[
    \dis(G_{i'},F_j\cup \Set{v})=1\text{ and }\dis(G_{i'},G_i)=\dis(G_i,F_j\cup\Set{v})-1.
    \]
    Equivalently, we show that
    \[
    \dis(G_{i'}^c,F_j^c \setminus v)=1 \text{ and } \dis(G_{i'}^c,G_{i}^c)=\dis(G_i^c,F_j^c\setminus v)-1.
    \]
    The complements are taken with respect to the vertex set $\calV(\calC)$. Therefore, these sets are edges in $c_d(\calC)$. The case when $\dis(G_i^c,F_j^c\setminus v)=1$ is trivial. Thus, we may assume that $\dis(G_i^c,F_j^c\setminus v)\ge 2$. Note that both $G_i^c$ and $F_j^c$ contain $v$. We only need to find suitable $u\in  (F_j^c\setminus v)\setminus G_i^c$ such that $F_j^c \setminus u\in c_d(\calC)$.  Whence, we can take $G_{i'}$ with $G_{i'}^c=F_j^c \setminus u$.

    If we cannot find such a vertex $u$, since $\dis(G_i^c,F_j^c\setminus v)\ge 2$, we can take two distinct $u_1,u_2\in (F_j^c\setminus v)\setminus G_i^c$ such that both $e_1\coloneqq F_j^c\setminus u_1$ and $e_2\coloneqq F_j^c\setminus {u_2}$ belong to $\calC$. As $v$ is simplicial for $\calC$ and belongs to both $e_1$ and $e_2$, we can find suitable edge $e\subseteq (e_1\cup e_2)\setminus v=F_j^c\setminus v$. But the cardinality of $F_j^c \setminus v$ is exactly $d$, the minimum edge cardinality of $\calC$. Hence $e=F_j^c\setminus v$ is an edge of $\calC$ not containing $v$. This contradicts to the assumption that $F_j^c\setminus v \in c_d(\calC)$.
\end{proof}

\subsection{Emtander's chordality}
Two distinct vertices $x,y$ of a clutter $\calH$ are \Index{neighbors} if there is an edge $E\in \calE(\calH)$, such that $x,y\in E$. For any vertex $x\in \calV(\calH)$, the  set of neighbors of $x$ is denoted by $N(x)$, and called the \Index{neighborhood} of $x$. If $N(x)=\varnothing$, $x$ is called \Index{isolated}. Furthermore, we let $N[x]\coloneqq N(x)\cup \Set{x}$ be the \Index{closed neighborhood} of $x$.

The \Index{$d$-complete clutter}, $K_n^d$, on $[n]$, is defined by  $\calE(K_n^d)=\binom{[n]}{d}$, the set of all subsets of $[n]$ of cardinality $d$. If $n<d$, $K_n^d$ is interpreted as $n$ isolated points.

A clutter $\calC$ is said to have a \Index{perfect elimination order} if its vertices can be ordered $x_1,x_2,\dots, x_n$ such that for each $i$, either the induced subclutter $\calC_{N[x_i]\cap \{x_i,x_{i+1},\dots,x_n\}}$ is isomorphic to a $d$-complete clutter $K_m^d$ for some $m\ge d$, or else $x_i$ is isolated in $\calC_{\{x_i,x_{i+1},\dots,x_n\}}$. By \cite[Lemma 2.2]{MR2603461}, if $\calC$ has a perfect elimination order and $\calE(\calC)\ne \emptyset$, then $\calC$ has a perfect elimination order $x_1,\dots,x_n$ such that $x_1$ is not isolated.

An \Index{E-chordal clutter} is a $d$-uniform clutter, obtained inductively as follows:
\begin{enumerate}[a]
    \item $K_n^d$ is an E-chordal clutter for any $n,d\in \NN$.
    \item If $\calK$ is E-chordal, then so is $\calC=\calK\cup_{K_j^d}K_i^d$ for some $0\le j<i$. Here, we attach $K_i^d$ to $\calK$ by identifying suitable common $K_j^d$.
\end{enumerate}

\begin{lemma}
    [{\cite[Theorem-Definition 2.1]{MR2603461}}]
    \label{perfect-elimination}
    Let $\calC$ be a $d$-uniform clutter. Then $\calC$ is E-chordal if and only if it has a perfect elimination order.
\end{lemma}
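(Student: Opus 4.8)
The statement asserts the equivalence of two a priori different definitions of $E$-chordality, so the plan is to prove each implication by induction, after isolating an auxiliary restriction principle. Throughout, $\calC$ is $d$-uniform, and I note that every induced subclutter of a $d$-uniform clutter is again $d$-uniform, so the inductive hypotheses stay in the right category.

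\textbf{From a perfect elimination order to E-chordality.} I would induct on $n=|\calV(\calC)|$. If $\calE(\calC)=\varnothing$, then $\calC$ is a set of isolated points, obtained by iterated attachment of copies of $K_1^d$ along $K_0^d$ (for $d>1$). Otherwise, using the normalization recalled just before the statement, fix a perfect elimination order $x_1,\dots,x_n$ with $x_1$ non-isolated, so the induced subclutter on $N[x_1]$ is a complete clutter $K_m^d$ with $m=|N[x_1]|\ge d$. The first key step is a restriction principle: $x_2,\dots,x_n$ is a perfect elimination order of $\calC':=\calC_{\calV(\calC)\setminus\{x_1\}}$. Indeed, for each $i\ge2$ with tail $T=\{x_i,\dots,x_n\}$ one has $\calC'_T=\calC_T$ since $x_1\notin T$; the isolated alternative transfers verbatim, while in the complete alternative, writing $A=N[x_i]\cap T$ with $\calC_A\cong K_\ell^d$ ($\ell\ge d$), one checks $N_{\calC'}[x_i]\cap T=A$: any $y\in A\setminus\{x_i\}$ lies with $x_i$ in some $d$-subset $e\subseteq A$ (as $\calC_A$ is complete and $\ell\ge d\ge2$), and $e\subseteq\calV(\calC)\setminus\{x_1\}$ is still an edge of $\calC'$, so $\calC'_A=\calC_A\cong K_\ell^d$. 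Hence $\calC'$ is $E$-chordal by induction.

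It then remains to recognize $\calC=\calC'\cup_{K_{m-1}^d}K_m^d$. The induced subclutter of $\calC'$ on $N(x_1)$ is the complete clutter $K_{m-1}^d$ (restrict the complete $\calC_{N[x_1]}$), supplying the common gluing clique. Every edge of $\calC$ through $x_1$ is a $d$-subset of $N[x_1]$ containing $x_1$, since all its vertices are neighbors of $x_1$; conversely all such subsets are edges; and the $d$-subsets of $N[x_1]$ avoiding $x_1$ already lie in $\calC'$. Thus attaching $K_m^d$ to $\calC'$ along $K_{m-1}^d$ restores exactly $\calC$, which is therefore $E$-chordal.

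\textbf{From E-chordality to a perfect elimination order.} Here I would induct on the number of attachment steps. For the base clutter $K_n^d$ any ordering works, the tail-neighborhood of each vertex being again complete and degenerating to isolated points once fewer than $d$ vertices remain. For the step $\calC=\calK\cup_{K_j^d}K_i^d$, take a perfect elimination order of $\calK$ (induction) and prepend the $i-j$ vertices of $K_i^d$ not belonging to $\calK$, in any order. For a prepended vertex $x$, all its edges lie inside $K_i^d$, so its tail-neighborhood is the (complete) induced subclutter of $K_i^d$ on the not-yet-eliminated vertices of $K_i^d$, or isolated points when too few remain. Since deleting the prepended vertices returns exactly $\calK$ (because $\calC_{\calV(\calK)}=\calK$), the elimination conditions for the remaining vertices should be inherited from those in $\calK$.

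\textbf{Main obstacle.} The delicate point is precisely this inheritance in the degenerate regime $j<d$ (possible only for $d\ge3$): when the shared clique $K_j^d$ carries no edges, two shared vertices that were non-adjacent in $\calK$ become adjacent in $\calC$ through edges of $K_i^d$, so a shared vertex can acquire extra tail-neighbors and the naive ``prepend'' order may violate completeness. Handling this — for instance by choosing the elimination order of $\calK$ so that shared vertices are eliminated compatibly, or by treating these thin gluings as a separate case — together with the purely formal $d=1$ and isolated-vertex degeneracies, is where the bookkeeping must be done carefully; note that the classical graph case $d=2$ sidesteps the difficulty entirely, since then $j<d$ forces $j\le1$ and no two shared vertices ever coexist.
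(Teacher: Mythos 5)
The paper itself offers no proof of this lemma---it is quoted from Emtander's paper---so your attempt has to be judged on its own terms. Your first direction (a perfect elimination order implies E-chordality) is correct and complete: the restriction principle, that $x_2,\dots,x_n$ is a perfect elimination order of $\calC'=\calC_{\calV(\calC)\setminus\{x_1\}}$, is verified soundly, and the identification $\calE(\calC)=\calE(\calC')\cup\binom{N[x_1]}{d}$, exhibiting $\calC$ as $\calC'\cup_{K_{m-1}^d}K_m^d$, is also correct (modulo the $d=1$ degeneracy you flag, which is a convention issue with the cited source rather than with your argument).

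The second direction, however, has a genuine gap, exactly at the ``main obstacle'' you name, and it is not mere bookkeeping: the construction you propose fails as stated. Take $d=3$, let $\calK$ be the disjoint union of the complete clutters on $\{y,a,b\}$ and on $\{y',c,e\}$ (E-chordal, by attaching along $K_0^3$), and form $\calC=\calK\cup_{K_2^3}K_3^3$ by gluing the complete clutter on $\{y,y',z\}$ along the two isolated points $\{y,y'\}$, so that $\calE(\calC)=\bigl\{\{y,a,b\},\{y',c,e\},\{y,y',z\}\bigr\}$. The order $y,a,b,y',c,e$ is a perfect elimination order of $\calK$, but prepending the new vertex $z$ gives $z,y,a,b,y',c,e$, which is \emph{not} a perfect elimination order of $\calC$: at $y$ the tail is $T=\{y,a,b,y',c,e\}$ and $N_\calC[y]\cap T=\{y,a,b,y'\}$ (the vertex $y'$ has become a neighbor of $y$ through the edge $\{y,y',z\}$), whose induced subclutter has the single edge $\{y,a,b\}$ and hence is not complete, while $y$ is not isolated in $\calC_T$ either. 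The lemma itself survives in this example---$a,b,c,e,z,y,y'$ is a perfect elimination order of $\calC$, as is $z,a,b,c,e,y,y'$, obtained by prepending $z$ to the \emph{different} perfect elimination order $a,b,c,e,y,y'$ of $\calK$ in which the shared vertices come last. This pinpoints what is missing: since the induction hypothesis only hands you \emph{some} perfect elimination order of $\calK$, you need a strengthened inductive statement, e.g.\ that every E-chordal clutter admits a perfect elimination order in which the vertices of a prescribed gluing site $K_j^d$ are eliminated last, and you must check that this stronger property propagates through the attachment step. That strengthened claim is the actual substance of Emtander's theorem, and your proposal neither states nor proves it; as written, the implication from E-chordality to the existence of a perfect elimination order remains unproven.
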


\begin{lemma}
    [{\cite[Theorem 4.3]{MR2853077}}]
    Let $\calC$ be a $d$-uniform E-chordal clutter. Then $ \calI(c_d(\calC))^\vee$ is shellable.
\end{lemma}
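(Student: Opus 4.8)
The plan is to run the same inductive machine as in Theorem \ref{W-chordal-ss}, but to replace the simplicial vertex by the leading vertex of a perfect elimination order, which is available through Lemma \ref{perfect-elimination}. I would induct on the pair $(d,n)$, where $n=\left|\calV(\calC)\right|$, and produce an explicit shelling order of $\calI(c_d(\calC))^\vee$ by concatenating shelling orders of its deletion and link, exactly as in the W-chordal argument. The two combinatorial identities used there,
\[
\link_{\calI(c_d(\calC))^\vee}(v)=\calI(c_d(\calC\setminus v))^\vee
\qquad\text{and}\qquad
\calI(c_d(\calC))^\vee\setminus v=\calI(c_{d-1}(\calC/v))^\vee,
\]
hold for an arbitrary clutter, so they remain at my disposal; the whole task is to feed E-chordal input into both sides.

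The base cases are identical to those of Theorem \ref{W-chordal-ss}: if $c_d(\calC)$ has no edge the dual complex is degenerate, and if $d=1$ the clutter $c_1(\calC)$ is trivially ESS. For the inductive step I would use \cite[Lemma 2.2]{MR2603461} to pick a perfect elimination order $x_1,\dots,x_n$ whose leading vertex $v=x_1$ is not isolated, so that $\calC_{N[v]}\cong K_m^d$ with $m=\left|N[v]\right|$. On the \emph{link} side, restricting the order to $x_2,\dots,x_n$ shows that $\calC\setminus v$ is again $d$-uniform and E-chordal on $n-1$ vertices, so $\calI(c_d(\calC\setminus v))^\vee$ is degenerate or, by the inductive hypothesis, shellable.

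The crux is the \emph{deletion} side, where contraction lowers the uniformity and $\calC/v$ is no longer uniform, so one cannot directly call it E-chordal. Here I would exploit the elimination hypothesis: every edge of $\calC$ through $v$ lies in $N[v]$, and $\calC_{N[v]}\cong K_m^d$ means every $d$-subset of $N[v]$ is an edge, so for a $(d-1)$-set $f$ with $v\notin f$ one has $f\cup\{v\}\in\calE(\calC)$ if and only if $f\subseteq N(v)$. Consequently
\[
c_{d-1}(\calC/v)=\Set{f : \left|f\right|=d-1,\ v\notin f,\ f\not\subseteq N(v)},
\]
which is exactly $c_{d-1}(\calD)$ for the $(d-1)$-uniform clutter $\calD$ on $\calV(\calC)\setminus\{v\}$ whose edges are all $(d-1)$-subsets of $N(v)$. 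This $\calD$ is a complete clutter together with isolated vertices, hence E-chordal by Lemma \ref{perfect-elimination}, and the induction on $d$ gives that $\calI(c_{d-1}(\calC/v))^\vee=\calI(c_{d-1}(\calD))^\vee$ is shellable. The point is that the non-uniform contraction has the same $d$-non-edge structure as a genuinely E-chordal clutter of strictly smaller uniformity.

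Finally I would verify that $v$ is a shedding vertex, so that the shelling orders of the deletion and the link glue into a shelling of $\calI(c_d(\calC))^\vee$ by the standard shedding recursion for shellability, in the same manner the orders $G_1,\dots,G_n,F_1\cup\{v\},\dots,F_m\cup\{v\}$ were assembled in Theorem \ref{W-chordal-ss}. When $\calC\setminus v$ has a $d$-non-edge this is \cite[Lemma 6.8]{MR2853065}; in the degenerate case where $\calC\setminus v$ is $d$-complete, $v$ meets every facet and $\calI(c_d(\calC))^\vee$ coincides with its own deletion, which is already shellable. The main obstacle I anticipate is precisely this deletion step together with the shedding check: one must be certain that the perfect elimination hypothesis forces the equivalence $f\cup\{v\}\in\calE(\calC)\iff f\subseteq N(v)$, so that the non-uniform contraction can be legitimately replaced by the E-chordal clutter $\calD$, and that the shedding condition persists in the boundary cases.
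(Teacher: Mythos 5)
Your proposal is correct, and its skeleton --- perfect elimination order with a non-isolated leading vertex (Lemma \ref{perfect-elimination} and \cite[Lemma 2.2]{MR2603461}), the link/deletion identities, and the shedding-vertex gluing borrowed from Theorem \ref{W-chordal-ss} --- is exactly the skeleton of the paper's proof of the stronger Theorem \ref{E-chordal-ss}, which subsumes this cited lemma. Both arguments also hinge on the same key consequence of the elimination hypothesis: $f\cup\{v\}\in\calE(\calC)$ if and only if $f\subseteq N(v)$, so that $c_{d-1}(\calC/v)$ is precisely the set of $(d-1)$-sets not contained in $N(v)$. Where you genuinely diverge is the deletion side. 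The paper does not re-enter the induction there: it recognizes this edge set as the layered complex $A_{1,\min(d-1,\,n-1-|N|)}$ of Lemma \ref{layers of matroids}, a matroid complex, hence strongly shellable by Proposition \ref{matroid implies ss}. You instead re-encode the contraction as $c_{d-1}(\calD)$ for an auxiliary $(d-1)$-uniform E-chordal clutter $\calD$ (complete on $N(v)$, isolated elsewhere) and run a double induction on $(d,n)$; this is legitimate, since $\calD$ visibly has a perfect elimination order. The trade-off: the matroid route needs no induction on $d$ and yields \emph{strong} shellability of the deletion complex outright, which is what lets the paper upgrade the conclusion from shellable to strongly shellable; your route is more elementary, avoiding the matroid lemma, but as assembled via the standard shedding recursion it proves only the stated shellability. (One wording slip: in the degenerate case, $v$ lies in every edge of $c_d(\calC)$ and hence in \emph{no} facet of $\calI(c_d(\calC))^\vee$, which is why the complex equals its own deletion.)
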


The above shellability result can be strengthened as follows.

\begin{theorem}
    \label{E-chordal-ss}
    Let $\calC$ be a $d$-uniform E-chordal clutter.
    Then $ \calI(c_d(\calC))^\vee$ is strongly shellable.
\end{theorem}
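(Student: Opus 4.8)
The plan is to imitate the inductive scheme of the proof of Theorem \ref{W-chordal-ss}, but to replace the contraction step --- which fails here, because E-chordality is not closed under contraction and $\calC/v$ need not even be uniform --- by a direct analysis of the deletion complex. By Lemma \ref{perfect-elimination} I would fix a perfect elimination order $x_1,\dots,x_n$ of $\calC$, and by the remark following that lemma assume $x_1$ is not isolated, so that $\calC_{N[x_1]}\cong K_m^d$ for some $m\ge d$. Write $\Delta\coloneqq\calI(c_d(\calC))^\vee$ and $v\coloneqq x_1$, and induct on $n=|\calV(\calC)|$, with the same degenerate base cases as before (when $c_d(\calC)$ has no edge, when $d=1$, or when $v$ lies in every $d$-non-edge so that $\Delta=\Delta\setminus v$). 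First I would record that $v$ is a \emph{simplicial} vertex of $\calC$: any two distinct edges through $v$ lie in $N[v]$, so their union minus $v$ is a set of at least $d$ vertices inside $N(v)$, and every $d$-subset of $N[v]$ is an edge since $\calC_{N[v]}$ is complete.

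For the link, the identity $\link_\Delta(v)=\calI(c_d(\calC\setminus v))^\vee$ from the first observation in the proof of Theorem \ref{W-chordal-ss} still holds. The restriction $x_2,\dots,x_n$ is a perfect elimination order of $\calC\setminus v$, since for $i\ge 2$ the relevant induced subclutter $\calC_{N[x_i]\cap\{x_i,\dots,x_n\}}$ involves no $v$ and is unchanged by its removal; hence $\calC\setminus v$ is again $d$-uniform and E-chordal by Lemma \ref{perfect-elimination}, on a strictly smaller vertex set. By the inductive hypothesis $\link_\Delta(v)=\calI(c_d(\calC\setminus v))^\vee$ is either degenerate or strongly shellable.

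The main new point is the deletion $\Delta\setminus v$, for which I would \emph{not} invoke any chordality of $\calC/v$, but instead compute its facets directly. A maximal face of $\Delta$ avoiding $v$ is the complement of a minimal set $T$ with $v\in T$ that contains some $d$-non-edge, and I claim the minimal such $T$ are exactly the $d$-non-edges containing $v$. Indeed, if $v\in T$ and $T$ contains a non-edge $e$ with $v\notin e$, then (as $d\ge 2$) $e$ has a vertex $w\notin N[v]$, for otherwise $e\subseteq N(v)$ would be an edge; then for any $y\in e$ with $y\ne w$ the set $(\{v\}\cup e)\setminus y$ is a $d$-non-edge through $v$ contained in $T$, so $T$ is not minimal. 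Consequently the facets of $\Delta\setminus v$ are precisely the sets $V\setminus e$ with $e$ a $d$-non-edge containing $v$, where $V\coloneqq\calV(\calC)$. Setting $V'\coloneqq V\setminus v$ and $O\coloneqq V'\setminus N(v)$ and passing to the ground set $V'$, these facets are exactly the complements in $V'$ of the $(d-1)$-subsets of $V'$ that meet $O$. Let $M$ be the complex on $V'$ whose facets are the $(d-1)$-subsets meeting $O$. Every induced subcomplex $M_W$ is pure --- it is generated either by the $(d-1)$-subsets of $W$ meeting $O$, or, when $W\cap O=\varnothing$, by the subsets of $W$ of size $\min(|W|,d-2)$ --- so $M$ is a matroid complex and is strongly shellable by Proposition \ref{matroid implies ss}. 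Since $\Delta\setminus v=M^{c}$, it is strongly shellable by Lemma \ref{complement shellable}.

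With $\link_\Delta(v)$ and $\Delta\setminus v$ both strongly shellable and $v$ a shedding vertex (by \cite[Lemma 6.8]{MR2853065}, in the nondegenerate case where $\calC\setminus v$ carries a $d$-non-edge), the facet set of $\Delta$ is the concatenation of a strong shelling of $\Delta\setminus v$ followed by the link facets each enlarged by $v$. The verification that this concatenation is a strong shelling order is word for word the argument in the final paragraph of the proof of Theorem \ref{W-chordal-ss}: the only properties it uses are that $v$ is simplicial for $\calC$ and that the edges of $\calC$ have cardinality exactly $d$, both of which hold here. The main obstacle, and the place where the argument genuinely departs from the W-chordal case, is the identification of $\Delta\setminus v$ as the complement of a matroid complex; once this is in hand the remainder is routine.
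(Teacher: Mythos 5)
Your argument is correct, and its skeleton is exactly the paper's: induction on $|\calV(\calC)|$, the link/deletion decomposition at the leading vertex $v=x_1$ of a perfect elimination order, the observation that $v$ is simplicial, and the final concatenation of a strong shelling of $\Delta\setminus v$ with the link facets enlarged by $v$, verified word for word as in Theorem \ref{W-chordal-ss}. Where you genuinely differ is the packaging of the deletion step. The paper keeps the contraction formalism: it writes $\Delta\setminus v=\calI(c_d(\calC)/v)^\vee$, invokes the identity $c_d(\calC)/v=c_{d-1}(\calC/v)$ from Lemma 6.7 of \cite{MR2853065}, computes the edge set as the $(d-1)$-sets not contained in $N(v)$, and then cites its prepared Lemma \ref{layers of matroids} with $X=\calV(\calC)\setminus N[v]$, $Y=N(v)$, $\lambda=d-1$. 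You instead compute the facets of $\Delta\setminus v$ directly as complements of the minimal sets through $v$ containing a $d$-non-edge---your completeness-of-$\calC_{N[v]}$ argument showing these minimal sets are exactly the $d$-non-edges through $v$ is sound---and then verify the matroid property by hand. The two routes land on literally the same object (your $M$ is the paper's $A_{1,\min(d-1,|O|)}$), so yours is more self-contained (no contraction, no external Lemma 6.7), while the paper's is shorter because the layered-matroid lemma was proved in advance; note also that the paper, like you, never applies induction to $\calC/v$, so contraction is used only as notation there, not as a chordality-preserving operation. Two small points to patch. First, your purity check for $M_W$ is stated slightly too narrowly: when $W\cap O\neq\varnothing$ but $|W|<d-1$, the unique facet of $M_W$ is $W$ itself rather than a $(d-1)$-set; purity still holds, so nothing breaks. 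Second, you assume $x_1$ is non-isolated via the remark after Lemma \ref{perfect-elimination}, which requires $\calE(\calC)\neq\varnothing$; the edgeless $d$-uniform clutter is E-chordal and is not covered by your listed base cases. This is harmless---when $N(v)=\varnothing$ your minimal-cover claim holds vacuously since no $d$-set fits inside $N(v)$, and alternatively $\calI(c_d(\calC))^\vee$ is then itself a matroid complex---but the case should be acknowledged.
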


\begin{lemma}
    \label{layers of matroids}
    Let $X$ and $Y$ be two disjoint finite sets. Let
    \[
    B_{p,q}=\{X'\cup Y':X'\subseteq X \text{ and } Y'\subseteq Y \text{ with }|X'|=p \text{ and }|Y'|=q\}.
    \]
    Fix a positive integer $\lambda$. For indices $i$ and $j$ with $\max(0,\lambda-|Y|)\le i\le j \le \min(\lambda,|X|)$, let
    \[
    A_{i,j}=\bigcup_{i\le k\le j}B_{k,\lambda-k}.
    \]
    Then $A_{i,j}$ is strongly shellable.
\end{lemma}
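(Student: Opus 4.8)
The plan is to realize $A_{i,j}$ as the collection of bases of a single matroid on the ground set $X\cup Y$ and then to quote Proposition \ref{matroid implies ss}. Concretely, I would introduce the set system
\[
\mathcal{I}=\Set{S\subseteq X\cup Y : |S\cap X|\le j,\ |S\cap Y|\le \lambda-i,\ |S|\le \lambda}
\]
and argue that it is the family of independent sets of a matroid whose set of bases is exactly $A_{i,j}$. The three inequalities are the capacity constraints attached to the members of the laminar family $\Set{X,\,Y,\,X\cup Y}$ (note $X$ and $Y$ are disjoint and both lie inside $X\cup Y$), with capacities $j$, $\lambda-i$ and $\lambda$; thus $\mathcal{I}$ is a laminar matroid. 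If one prefers a self-contained check, the augmentation axiom is immediate: given $S,T\in\mathcal{I}$ with $|S|<|T|$, at least one of the inequalities $|T\cap X|>|S\cap X|$ or $|T\cap Y|>|S\cap Y|$ must hold, and choosing a vertex $e\in T\setminus S$ inside the corresponding part raises only that one coordinate by one, so all three capacities (including $|S|+1\le|T|\le\lambda$) remain satisfied; together with the obvious hereditary property this shows $\mathcal{I}$ is a matroid.

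The second step is to identify the facets, and this is where the hypotheses on $i$ and $j$ are genuinely used. I would first verify \emph{purity}: every maximal member of $\mathcal{I}$ has cardinality exactly $\lambda$. Indeed, if $S\in\mathcal{I}$ has $|S|<\lambda$, then since $j\ge i$ one cannot have both $|S\cap X|=j$ and $|S\cap Y|=\lambda-i$ simultaneously; hence at least one of these two capacities is unsaturated, and because $j\le|X|$ and $\lambda-i\le|Y|$ there is always a free vertex of $X$ (respectively $Y$) available to enlarge $S$ within $\mathcal{I}$. Thus $S$ is not maximal, and every basis has size $\lambda$. Conversely, a $\lambda$-set $S$ lies in $\mathcal{I}$ exactly when $|S\cap X|\le j$ and $|S\cap Y|=\lambda-|S\cap X|\le\lambda-i$, i.e.\ precisely when $i\le|S\cap X|\le j$. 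This is the statement that the basis set equals $\bigcup_{i\le k\le j}B_{k,\lambda-k}=A_{i,j}$.

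Finally, since in any matroid every independent set extends to a basis, the unique complex over $X\cup Y$ generated by the facet set $A_{i,j}$ coincides with the matroid complex of $\mathcal{I}$. By Proposition \ref{matroid implies ss} this complex is strongly shellable, which is exactly the claim.

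I expect the only real subtlety to be the purity verification, namely checking that the range conditions $\max(0,\lambda-|Y|)\le i\le j\le\min(\lambda,|X|)$ force all bases to attain the full size $\lambda$; the verification of the matroid axioms and the closing appeal to Proposition \ref{matroid implies ss} are then routine. The essential conceptual step is simply to recognize the ``layered'' system $A_{i,j}$ as a laminar matroid (specializing, when $i=j$, to the direct sum of the two uniform matroids giving $B_{i,\lambda-i}$).
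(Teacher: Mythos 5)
Your proposal is correct and takes essentially the same route as the paper: both realize the complex generated by $A_{i,j}$ as the matroid complex with independent sets $\Set{S\subseteq X\cup Y : |S\cap X|\le j,\ |S\cap Y|\le \lambda-i,\ |S|\le \lambda}$ and then conclude by Proposition \ref{matroid implies ss}. The only difference is in how matroid-ness is certified --- the paper invokes the criterion (recalled in Section 2, from Stanley) that a complex is a matroid complex if and only if every induced subcomplex is pure, declaring this ``clearly pure,'' whereas you verify the augmentation axiom directly and explicitly identify the bases with $A_{i,j}$; your version spells out the purity and basis-identification details that the paper leaves implicit.
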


\begin{proof}
    Let $\Delta$ be the simplicial complex whose facet set is $A_{i,j}$.
    Obviously,
    \[
    \Delta=\{X'\cup Y' :   \text{ $X'\subseteq X$ and $Y'\subseteq Y$ with $|X'|\le j$, $|Y'|\le \lambda-i$ and $|X'|+|Y'|\le \lambda$} \}.
    \]
    For every subset $W\subseteq X\cup Y$, the induced subcomplex
    \begin{align*}
        \Delta_W= \{X'\cup Y' :&   \text{ $X'\subseteq X\cap W$ and $Y'\subseteq Y\cap W$ with}\\
        & \text{ $|X'|\le j$, $|Y'|\le \lambda-i$ and $|X'|+|Y'|\le \lambda$} \}.
    \end{align*}
    This subcomplex is clearly pure. Hence $\Delta$ is a matroid complex. By Proposition \ref{matroid implies ss}, $\Delta$ is strongly shellable, i.e., $A_{i,j}$ is strongly shellable.
\end{proof}

\begin{proof}
    [Proof of Theorem \ref{E-chordal-ss}]
    We prove by induction. The base cases have already been discussed in the proof for Theorem \ref{W-chordal-ss}. Since $\calC$ is E-chordal, it has a perfect elimination order $x_1,x_2,\dots,x_n$ on the vertex set. Take $v=x_1$.
    Note that $\calC_{\{x_2,\dots,x_n\}}$ is still E-chordal. Thus, by induction, the link complex $\link_{\calI(c_d(\calC))^\vee}(v)=\calI(c_d(\calC\setminus v))^\vee$ is strongly shellable. On the other hand, we always have $\calI(c_d(\calC))^\vee\setminus v= \calI(c_d(\calC)/v)^\vee$.

    By the elimination condition, the vertex $v$ is clearly a simplicial vertex.  As in the proof for Theorem \ref{W-chordal-ss}, either $v$ is a shedding vertex, or else $\calI(c_d(\calC))^\vee=\calI(c_d(\calC))^\vee\setminus v$. In both cases, we are reduced to consider $\calI(c_d(\calC)/v)^\vee$.

    Let $N=N(x_1)$ be the neighborhood of $x_1$. By \cite[Lemma 6.7]{MR2853065}, $c_d(\calC)/v=c_{d-1}(\calC/v)$. Its edge set is
    \[
    \big \{e\subseteq \{x_2,\dots,x_n\}: |e|=d-1 \text{ and }e\not\subset N \big\}.
    \]
    If we take $X=\Set{x_2,\dots,x_n}\setminus N$ and $Y=N$, then the above edge set is $A_{1,\min(d-1,n-1-|N|)}$ for $\lambda=d-1$ in Lemma \ref{layers of matroids}. Thus, it is strongly shellable. Equivalently, $\calI(c_d(\calC)/v)^\vee$ is strongly shellable.

    The rest of the proof will be essentially the same as that for Theorem \ref{W-chordal-ss}.
\end{proof}

\begin{remark}
    Under the same assumptions as in Theorems \ref{W-chordal-ss} and \ref{E-chordal-ss}, Woodroofe \cite[Theorem 6.9, Proposition 6.11]{MR2853065} showed that $\calI(c_d(\calC))^\vee$ is also vertex decomposable respectively. On the other hand, by \cite[Examples 6.7 and 6.9]{SSC}, we know that there is no implication between strongly shellable complexes and vertex decomposable complexes.
\end{remark}

\begin{remark}
    \label{two-questions}
    Bigdeli, Yazdan Pour and Zaare-Nahandi \cite{arXiv:1508.03799} also introduced chordal property for $d$-uniform clutters. Following their notation, denote this class by $\frakC_d$.  Among others, they showed the following relation for $d$-uniform clutters:
    \begin{center}
        \begin{tabular}{r}
            W-chordal\\
            E-chordal
        \end{tabular}
        $\bigg\} \subsetneq  \mathfrak{C}_d \subseteq \mbox{LinRes}$
    \end{center}
    Here, LinRes is the class of $d$-uniform clutters whose edge ideals have a linear resolution over any field. Furthermore, they asked the following two questions.
    \begin{enumerate}[1]
        \item Does there exist any $d$-uniform clutter $\calC$ such that the ideal $I (\bar{\calC})$ has a linear resolution over any field, but $\calC$ is not in the class $\mathfrak{C}_d$?
        \item Find a subclass of chordal clutters $\calC$ such that their associated ideals $I(\bar{\calC})$ have linear quotients.
    \end{enumerate}
    On the other hand, by Lemma \ref{complement shellable}, Theorems \ref{linear quotients}, \ref{W-chordal-ss} and \ref{E-chordal-ss}, we have
    \begin{center}
        \begin{tabular}{r}
            W-chordal\\
            E-chordal
        \end{tabular}
        $\bigg\} \subsetneq  \mbox{CESS} \subseteq \mbox{LinRes}$
    \end{center}
    Here, CESS denotes the class of $d$-uniform clutters whose (edge)-complement clutter is ESS. By this fact, one might be tempted to ask if CESS is a subclass of $\frakC_d$. For the time being, we don't have an answer, though extensive computational examples suggest so. Matroid complexes are typical strongly shellable complexes. But as a matter of fact, it is not clear so far whether the complement of matroid complexes belong to $\frakC_d$; see \cite[Proposition 2.2]{arXiv:1601.03207} and the remark after that. Only one thing is for sure: if CESS is not a subclass of $\frakC_d$, then we have an answer for the first question.
\end{remark}

\section{Edgewise strongly shellable graphs}

Throughout this section, we focus on 2-uniform clutters, which are actually finite simple graphs.  Recall that if $G$ is a \Index{finite simple graph} with the vertex set $\calV(G)$ and the edge set $\calE(G)$,  then $|\calV(G)|$ is finite and $\calE(G)\subseteq \Set{ \{u,v\}\mid \text{$u,v\in \calV(G)$ are distinct}}$.
The \Index{complement graph} $\bar{G}$ is the finite simple graph with identical vertex set and $\{u,v\}\in \calE(\bar{G})$ if and only if $\{u,v\}\notin \calE(G)$ for distinct $u,v\in \calV(G)$.
The \Index{neighborhood} of $v$ in $G$ is $N_G(v)\coloneqq\set{u\in \calV(G)\mid \{u,v\}\in\calE(G)}$ and its cardinality is called the \Index{degree} of $v$.
And a vertex $v\in \calV(G)$ is \Index{isolated} if its degree is $0$, i.e., there exists no edge containing $v$ in $G$. A vertex is a \Index{leaf} if its degree is $1$.

Recall that a \Index{cycle} of $G$ of length $q\ge 3$ is a subgraph $C$ of $G$ such that
\[
\calE(C) = \Set{\Set{i_1 ,i_2 },\Set{i_2 ,i_3 }, \dots ,\Set{i_{q-1} ,i_q },\Set{i_q ,i_1 }},
\]
where $i_1 ,i_2 ,\dots,i_q$ are distinct vertices of $G$. A \Index{chord} of a cycle $C$ is an edge $\Set{i,j}$ of $G$ such that $i$ and $j$ are vertices of $C$ with $\Set{i,j} \notin \calE(C)$. A \Index{chordal graph} is a finite graph each of whose cycles of length $> 3$ has a chord. Note that W-chordal 2-uniform clutters, E-chordal 2-uniform clutters and chordal graphs coincide.

Given a finite simple graph $G$, a subset $C$ of $\calV(G)$ is called a \Index{clique} of $G$ if for all distinct $i$ and $j$ in $C$, one has $\Set{i,j}\in \calE(G)$.  A \Index{perfect elimination ordering} of $G$ is an ordering $i_1,\dots,i_n$ of the vertices of G such that for each $j$ with $1 \le j < n$,
\[
C_{i_j}= \big\{i_k \in \calV(G) \mid   j<k\le n,\{i_k ,i_j \} \in \calE(G)\big\}
\]
is a clique of $G$. This is obviously a special case of the perfect elimination order that we encountered when discussing Emtander's chordality. The following result is clear.

\begin{lemma}
    \label{peo-restriction}
    Suppose that $O: v_1,\dots,v_n$ gives a perfect elimination order for $G$. Then for arbitrary nonempty subset $V'$ of $\calV(G)$, the restriction of $O$ on $V'$ gives a perfect elimination order for the induced subgraph $G_{V'}$.
\end{lemma}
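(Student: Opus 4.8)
The plan is to show that the later-neighbourhood sets on which the perfect elimination condition is phrased behave well under passage to an induced subgraph. Write the restriction of $O$ to $V'$ as the ordering $O'$ obtained by listing the elements of $V'$ in the order in which they appear in $O$. Fix a vertex $v\in V'$ that is not the last one in $O'$, and set
\[
C_v = \Set{w\in \calV(G) \mid w \text{ appears after } v \text{ in } O \text{ and } \Set{w,v}\in \calE(G)},
\]
the set of later neighbours of $v$ in $G$ with respect to $O$; by hypothesis $C_v$ is a clique of $G$. Analogously, let $C'_v$ denote the set of later neighbours of $v$ in $G_{V'}$ with respect to $O'$.

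The first step is to establish the identification $C'_v = C_v\cap V'$. Indeed, for a vertex $w\in V'$ appearing after $v$, the fact that $G_{V'}$ is an \emph{induced} subgraph means that $\Set{w,v}\in\calE(G_{V'})$ if and only if $\Set{w,v}\in \calE(G)$, since both endpoints already lie in $V'$. Hence $w\in C'_v$ exactly when $w\in V'$, $w$ appears after $v$ in $O$, and $\Set{w,v}\in\calE(G)$, which is precisely the condition $w\in C_v\cap V'$.

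Next I would verify that $C'_v$ is a clique of $G_{V'}$. Since $C_v$ is a clique of $G$ and any subset of a clique is again a clique, the set $C_v\cap V'$ is a clique of $G$. All of its vertices belong to $V'$, and $G_{V'}$ retains exactly those edges of $G$ joining vertices of $V'$; therefore $C_v\cap V' = C'_v$ is a clique of $G_{V'}$ as well. This is precisely the perfect elimination condition for $O'$ at $v$, and as $v$ was an arbitrary non-final vertex of $O'$, the order $O'$ is a perfect elimination order for $G_{V'}$.

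There is no genuine obstacle here: the argument is pure bookkeeping, and the only points requiring any care are the two elementary facts that an induced subgraph inherits precisely the edges of $G$ lying inside $V'$, and that a subset of a clique is again a clique. This is exactly why the statement can be flagged as clear.
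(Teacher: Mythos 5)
Your proof is correct, and it is exactly the routine verification the paper has in mind: the paper offers no proof at all, simply declaring the lemma ``clear,'' and your two observations (that an induced subgraph retains precisely the edges of $G$ inside $V'$, so the later-neighbourhood of $v$ in $G_{V'}$ is $C_v\cap V'$, and that a subset of a clique is a clique) are the standard bookkeeping that justifies that claim. Nothing differs in substance, so there is nothing further to compare.
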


For a finite simple graph $G$ with $\calV(G)=[n]$, its edge ideal is
\[
I(G)=\braket{x_ix_j\mid \{i,j\}\in\calE(G)} \subset S=\KK[x_1,\dots,x_n]
\]
for some base field $\KK$.
If instead $\calV(G)=\Set{x_1,\dots,x_n}$, then the edge ideal is
\[
I(G)=\braket{x_ix_j\mid \{x_i,x_j\}\in \calE(G)}\subset S.
\]

\begin{remark}
    \label{ESS-graph}
    Let $G$ be a finite simple graph. Notice that a strong shelling order $\succ$ on the edge set $\calE(G)=\{E_1,\dots,E_t\}$ simply means that whenever we have two disjoint edges $E_i\succ E_j$, then we can find some $E_k \succ E_j$ that intersects both $E_i$ and $E_j$ non-trivially. In other words, to check the strong shellability of $\succ$, one only needs to check non-adjacent edges.
\end{remark}

The main result of this section is as follows.

\begin{theorem}
    \label{ssg and chordal graph}
    Let $G$ be a finite simple graph. Then the following conditions are equivalent:
    \begin{enumerate}[1]
        \item \label{SSGC-1} $G$ is ESS.
        \item \label{SSGC-2} The edge ideal $I(G)$ has linear quotients.
        \item \label{SSGC-3} The edge ideal $I(G)$ has a linear resolution.
        \item \label{SSGC-4} The complement graph $\bar{G}$ is chordal.
        \item \label{SSGC-5} The complement graph $\bar{G}$ has a perfect elimination ordering.
    \end{enumerate}
\end{theorem}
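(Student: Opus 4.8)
The plan is to prove the single cycle of implications $(4)\Rightarrow(1)\Rightarrow(2)\Rightarrow(3)\Rightarrow(4)$, which at once yields the equivalence of the first four conditions, and then to treat $(4)\Leftrightarrow(5)$ on its own. The substance of the argument is the implication $(4)\Rightarrow(1)$, where the machinery of Section~3 does all the work. Since $\bar G$ is a $2$-uniform clutter, its chordality coincides with W-chordality, and the observation to exploit is that the $2$-non-edges of $\bar G$ are exactly the edges of $G$, i.e. $c_2(\bar G)=G$. Assuming $\bar G$ has at least one edge, it has minimum edge cardinality $2$, so Theorem~\ref{W-chordal-ss} applies and tells us that $\calI(c_2(\bar G))^\vee=\calI(G)^\vee$ is strongly shellable. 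By the discussion preceding Lemma~\ref{vc}, strong shellability of $\calI(c_2(\bar G))^\vee$ is equivalent to $c_2(\bar G)=G$ being ESS; hence $G$ is ESS. The degenerate cases, namely $\bar G$ edgeless (so $G$ is complete, whose edge set is a uniform matroid complex, strongly shellable by Proposition~\ref{matroid implies ss}) and $G$ edgeless, are the base cases already handled in the proof of Theorem~\ref{W-chordal-ss}.

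For $(1)\Rightarrow(2)$, suppose $G$ is ESS. Then the one-dimensional complex $\Delta$ whose facet set is $\calE(G)$ is pure and strongly shellable, and its facet ideal is precisely $I(\Delta)=\braket{x_ix_j:\{i,j\}\in\calE(G)}=I(G)$. Theorem~\ref{linear quotients} then shows that $I(G)$ has linear quotients, which is $(2)$. The step $(2)\Rightarrow(3)$ is routine: $I(G)$ is generated in the single degree $2$, so linear quotients force a $2$-linear resolution, by \cite[Proposition~8.2.1]{MR2724673}.

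The implication $(3)\Rightarrow(4)$ is the one genuinely external ingredient, and I expect it to be the only real obstacle to closing the cycle. It is precisely the hard direction of Fröberg's theorem: if $I(G)$ has a $2$-linear resolution, then $\bar G$ is chordal (see \cite{MR2724673}). It is worth stressing that the chain $(4)\Rightarrow(1)\Rightarrow(2)\Rightarrow(3)$ just constructed already reproves the easy half of Fröberg, that chordality of the complement implies a linear resolution, and in fact upgrades it all the way to strong shellability; the hard half cannot be recovered from the strong-shellability machinery alone, since strong shellability is strictly stronger than having a linear resolution, so Fröberg's theorem must be invoked directly. Finally, $(4)\Leftrightarrow(5)$ is the classical characterization of chordal graphs by the existence of a perfect elimination ordering, consistent with the hereditary behaviour recorded in Lemma~\ref{peo-restriction}, and is invoked as a known fact.
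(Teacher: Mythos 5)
Your proposal is correct and follows essentially the same route as the paper: $\ref{SSGC-4}\Rightarrow\ref{SSGC-1}$ via Theorem \ref{W-chordal-ss} (applied to $\bar G$ with $c_2(\bar G)=G$) combined with Lemma \ref{vc}/Lemma \ref{complement shellable}, $\ref{SSGC-1}\Rightarrow\ref{SSGC-2}$ via Theorem \ref{linear quotients}, $\ref{SSGC-2}\Rightarrow\ref{SSGC-3}$ as the standard fact for ideals generated in a single degree, Fr\"oberg's theorem for $\ref{SSGC-3}\Rightarrow\ref{SSGC-4}$, and the classical perfect-elimination characterization for $\ref{SSGC-4}\Leftrightarrow\ref{SSGC-5}$. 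The only difference is organizational---you close one cycle using just the hard direction of Fr\"oberg, while the paper cites the full equivalence $\ref{SSGC-3}\Leftrightarrow\ref{SSGC-4}$---which changes nothing of substance.
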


\begin{proof}
    The ``$\ref{SSGC-1}\Rightarrow \ref{SSGC-2}$'' part is due to Theorem \ref{linear quotients}. The ``$\ref{SSGC-2}\Rightarrow \ref{SSGC-3}$'' part is well-known, cf.~\cite[Lemma 1.5]{MR1918513}. The equivalence between \ref{SSGC-3} and \ref{SSGC-4} is due to \cite{MR1171260}.  And the equivalence between $\ref{SSGC-4}$ and $\ref{SSGC-5}$, as far as we know, can be traced back to \cite{MR0130190,MR0186421,MR0270957}.
    The ``$\ref{SSGC-4}\Rightarrow \ref{SSGC-1}$'' part is due to Theorems \ref{W-chordal-ss} or \ref{E-chordal-ss}, together with Lemma \ref{complement shellable}.
\end{proof}

For completeness, we will also demonstrate the direct proofs of $\ref{SSGC-5}\Rightarrow \ref{SSGC-1}\Rightarrow \ref{SSGC-4}$ and $\ref{SSGC-3}\Rightarrow \ref{SSGC-1}$ respectively. Before that, we first show two quick corollaries to the above characterization.

\begin{definition}
    [{\cite[Definition 2.3]{arXiv:1601.00456}}]
    Let $\calC$ be a clutter with the vertex set $\calV(\calC)=\Set{x_1,\dots,x_n}$ and $s_1,\dots,s_n$ be arbitrary positive integers.  The $(s_1,\dots,s_n)$-expansion of $\calC$, denoted by $\calC^{(s_1,\dots,s_n)}$, is the clutter with the vertex set $\Set{x_{i,j}\mid 1\le i\le n, 1\le j\le s_i}$ and the edge set
    \begin{align*}
        \Set{\{x_{i_1,r_1},\dots, x_{i_t,r_t}\} \mid  \{x_{i_1},\dots, x_{i_t}\}\in \mathcal{E}(\mathcal{C}) \text{ and } (r_1,\dots,r_{t})\in [s_{i_1}]\times \cdots \times [s_{i_{t}}]} \cup \\
        \Set{\{x_{i,j},x_{i,k}\} \mid \ 1\le i\le n, \ 1\le j\neq k\le s_i}.
    \end{align*}
\end{definition}

Now, we can recover the following result of \cite[Proposition 2.6]{arXiv:1601.00456}.

\begin{corollary}
    \label{expansion-complex-chordal}
    For any positive integers $s_1,\ldots,s_n$, $G$ is a chordal graph if and only if $G^{(s_1,\dots,s_n)}$ is chordal.
\end{corollary}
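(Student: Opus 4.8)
The plan is to reduce the statement to the expansion theorem for strongly shellable complexes, Theorem~\ref{expansion-complex}, using the chordality--ESS dictionary of Theorem~\ref{ssg and chordal graph}. First I would apply Theorem~\ref{ssg and chordal graph} to the graph $\bar{G}$ (whose complement is $G$): it gives that $G$ is chordal if and only if $\bar{G}$ is ESS. Applying the same theorem to $\overline{G^{(s_1,\dots,s_n)}}$ shows that $G^{(s_1,\dots,s_n)}$ is chordal if and only if $\overline{G^{(s_1,\dots,s_n)}}$ is ESS. Thus it suffices to prove that $\bar{G}$ is ESS if and only if $\overline{G^{(s_1,\dots,s_n)}}$ is ESS.

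The key step is to identify the complement of the graph-expansion with the complex-expansion of the complement, at the level of edge complexes. Write $\Delta_{\bar{G}}$ for the one-dimensional simplicial complex whose facet set is $\calE(\bar{G})$. I claim that
\[
\Delta_{\overline{G^{(s_1,\dots,s_n)}}} = (\Delta_{\bar{G}})^{(s_1,\dots,s_n)}.
\]
To verify this I would compute the edges of $\overline{G^{(s_1,\dots,s_n)}}$ directly from the definition of the clutter expansion. By definition $G^{(s_1,\dots,s_n)}$ has two kinds of edges: the \emph{fiber} edges $\{x_{i,j},x_{i,k}\}$ with $j\ne k$, and the cross-fiber edges $\{x_{i,r},x_{j,r'}\}$ arising from edges $\{x_i,x_j\}\in\calE(G)$. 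Passing to the complement, every within-fiber pair is already an edge of $G^{(s_1,\dots,s_n)}$, so no pair inside a single fiber survives; and a cross-fiber pair $\{x_{i,r},x_{j,r'}\}$ with $i\ne j$ is an edge of $\overline{G^{(s_1,\dots,s_n)}}$ exactly when $\{x_i,x_j\}\notin\calE(G)$, i.e.\ $\{x_i,x_j\}\in\calE(\bar{G})$. These surviving pairs are precisely the facets of $(\Delta_{\bar{G}})^{(s_1,\dots,s_n)}$, which proves the claim.

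With the claim in hand the corollary follows by chaining equivalences: $\bar{G}$ is ESS iff $\Delta_{\bar{G}}$ is strongly shellable (by the definition of ESS), iff $(\Delta_{\bar{G}})^{(s_1,\dots,s_n)}$ is strongly shellable (Theorem~\ref{expansion-complex}), iff $\Delta_{\overline{G^{(s_1,\dots,s_n)}}}$ is strongly shellable (by the claim), iff $\overline{G^{(s_1,\dots,s_n)}}$ is ESS. Combined with the first paragraph, this yields that $G$ is chordal if and only if $G^{(s_1,\dots,s_n)}$ is chordal.

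The hard part is essentially the bookkeeping in the claim: one must check that the fiber cliques inserted by the clutter expansion are exactly what is needed so that the complement contains no within-fiber edge, matching the fact that the complex expansion $(\Delta_{\bar{G}})^{(s_1,\dots,s_n)}$ carries no such facet. A minor point to keep in mind is the treatment of a vertex isolated in $\bar{G}$ (equivalently, adjacent to all others in $G$): such a vertex contributes no facet to either side, so the displayed identity of complexes holds even though the nominal vertex sets differ; since strong shellability depends only on the facets, this discrepancy is harmless.
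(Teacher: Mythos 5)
Your proof is correct and takes essentially the same route as the paper's: both reduce the statement to Theorem~\ref{expansion-complex} and Theorem~\ref{ssg and chordal graph} via the identification of the complement of the expanded graph $\overline{G^{(s_1,\dots,s_n)}}$ with the facet graph of the complex expansion $(\Delta_{\bar{G}})^{(s_1,\dots,s_n)}$. The only difference is that you verify this identification (and the harmlessness of vertices isolated in $\bar{G}$) in detail, whereas the paper states it as obvious.
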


\begin{proof}
    Let $\Delta$ be the one-dimensional pure simplicial complex over $\calV(G)=\Set{x_1,\dots,x_n}$ whose facet set is $\calE(\bar{G})$ where $\bar{G}$ is the complement graph of $G$ with respect to $\calV(G)$. Let $G'$ be the facet graph of $\Delta^{(s_1,\dots,s_n)}$. Obviously, $G'$ is also the complement graph of $G^{(s_1,\dots,s_n)}$ with respect to $\calV(\Delta^{(s_1,\dots,s_n)})=\Set{x_{i,j}\mid 1\le i\le n, 1\le j\le s_i}$. Now, it suffices to apply Theorems \ref{expansion-complex} and \ref{ssg and chordal graph}.
\end{proof}

Recall that for a graph $G$, a property $P$ is \Index{hereditary} if the property $P$ holds for every induced subgraph of $G$ whenever it holds for $G$.

\begin{corollary}
    For finite simple graphs, ESS property is hereditary.
\end{corollary}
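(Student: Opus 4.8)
The plan is to deduce the hereditary property directly from the characterization in Theorem \ref{ssg and chordal graph}, rather than working with strong shelling orders of edge sets. The key observation is that condition \ref{SSGC-4} of that theorem---namely, that the complement graph $\bar{G}$ is chordal---is the most convenient among the equivalent conditions for checking heredity, because chordality is a classically well-understood hereditary-style property. So the strategy is to reduce ESS-heredity for $G$ to the analogous statement for chordality of $\bar{G}$, and the bridge between the two is the interaction between taking induced subgraphs and taking complements.

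First I would fix an arbitrary induced subgraph $H=G_{V'}$ on a vertex subset $V'\subseteq \calV(G)$, and assume $G$ is ESS. By Theorem \ref{ssg and chordal graph} (the equivalence $\ref{SSGC-1}\Leftrightarrow\ref{SSGC-4}$), this is equivalent to $\bar{G}$ being chordal. The crucial compatibility step is that the complement of an induced subgraph is the induced subgraph of the complement on the same vertex set: $\overline{G_{V'}}=(\bar{G})_{V'}$. This is immediate from the definitions, since for distinct $u,v\in V'$ we have $\{u,v\}\in\calE(\overline{G_{V'}})$ iff $\{u,v\}\notin\calE(G)$ iff $\{u,v\}\in\calE((\bar{G})_{V'})$; the induced-subgraph construction simply retains exactly those edges lying inside $V'$, and this commutes with the edge-by-edge complementation. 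I would state this identity explicitly as the first line of the proof.

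Next I would invoke the fact that chordality is hereditary: any induced subgraph of a chordal graph is chordal. This is standard (a cycle of length $>3$ in an induced subgraph, together with any chord, would already live in the ambient graph), and it is consistent with Lemma \ref{peo-restriction}, which shows that a perfect elimination order restricts to any induced subgraph---so one may cite either the cycle-chord argument or the restriction of perfect elimination orders. Applying this to $\bar{G}$, the induced subgraph $(\bar{G})_{V'}$ is chordal. Combining with the previous identity, $\overline{G_{V'}}=(\bar{G})_{V'}$ is chordal, so by Theorem \ref{ssg and chordal graph} again (now the direction $\ref{SSGC-4}\Rightarrow\ref{SSGC-1}$) the induced subgraph $G_{V'}=H$ is ESS. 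Since $V'$ was arbitrary, ESS is hereditary.

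I do not anticipate any genuine obstacle here; the statement is essentially a formal consequence of the characterization theorem together with the elementary fact that complementation commutes with passing to induced subgraphs. The only point requiring a moment's care is the direction of complementation---one must verify that the complement is always taken with respect to the \emph{same} vertex set $V'$, so that no spurious edges involving vertices outside $V'$ interfere; this is exactly what the induced-subclutter/induced-subgraph definition guarantees. Everything else is a direct appeal to Theorem \ref{ssg and chordal graph} and the heredity of chordal graphs.
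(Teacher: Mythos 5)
Your proof is correct and follows essentially the same route as the paper, which likewise derives heredity of ESS from the heredity of chordality via the equivalence $\ref{SSGC-1}\Leftrightarrow\ref{SSGC-4}$ of Theorem \ref{ssg and chordal graph}. The only difference is that you spell out the (correct and worth noting) compatibility $\overline{G_{V'}}=(\bar{G})_{V'}$, which the paper leaves implicit.
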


\begin{proof}
    It is well-known that chordality is hereditary. Thus, we can apply the equivalence between \ref{SSGC-1} and \ref{SSGC-4} in Theorem \ref{ssg and chordal graph}. 
\end{proof}

Of course, this easy statement can be proved directly and generalized to higher dimensions.

\begin{proposition}
    Let $\calC$ be a uniform ESS clutter and $W\subseteq \calV(\calC)$. Then the induced sub-clutter $\calC_W:=\Set{E\in \calE(\calC):E\subseteq W}$ is also ESS.
\end{proposition}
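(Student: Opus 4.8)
The plan is to show that the restriction of a strong shelling order of $\calC$ to those edges lying inside $W$ is already a strong shelling order of $\calC_W$. Since $\calC$ is $d$-uniform, every edge $E\subseteq W$ still satisfies $|E|=d$, so $\calC_W$ is again $d$-uniform and the complex $\Delta_W$ with facet set $\calE(\calC_W)$ is pure; hence the notion of ESS makes sense for $\calC_W$, and it suffices to produce a strong shelling order in the sense of the original definition.

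Concretely, suppose $\calC$ is ESS and let $E_1,E_2,\dots,E_t$ be a strong shelling order of $\calE(\calC)$; that is, for each pair $i<j$ there is some $k<j$ with $|E_j\setminus E_k|=1$ and $E_i\cap E_j\subseteq E_k\subseteq E_i\cup E_j$. Let $E_{i_1},E_{i_2},\dots,E_{i_s}$ with $i_1<i_2<\dots<i_s$ be the subsequence consisting of exactly those $E_\ell$ with $E_\ell\subseteq W$. This subsequence is precisely the full list $\calE(\calC_W)$, equipped with the induced order. If $s\le 1$ there is nothing to check, so assume $s\ge 2$.

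To verify that this restricted order is a strong shelling order for $\calC_W$, take any two edges $E_a,E_b\in\calE(\calC_W)$ with $a<b$. Applying the strong shelling property of the full order to the pair $(E_a,E_b)$ yields an index $k<b$ with $|E_b\setminus E_k|=1$ and $E_a\cap E_b\subseteq E_k\subseteq E_a\cup E_b$. The decisive observation is the upper inclusion: since $E_a\subseteq W$ and $E_b\subseteq W$, we obtain $E_k\subseteq E_a\cup E_b\subseteq W$, so $E_k$ is itself an edge of $\calC_W$. As $k<b$ and $E_k\subseteq W$, this $E_k$ precedes $E_b$ in the restricted order and satisfies exactly the required distance and sandwich conditions for the pair $(E_a,E_b)$ inside $\calC_W$. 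Hence the restricted order is a strong shelling order, and $\calC_W$ is ESS.

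The only point requiring care — and the reason the argument succeeds — is that the witnessing facet $E_k$ must stay inside $W$; this is forced automatically by the containment $E_k\subseteq E_a\cup E_b$ built into the definition of a strong shelling order. It is precisely here that strong shellability, rather than ordinary shellability, is essential: an ordinary shelling only guarantees that $E_b\setminus E_k$ is a single vertex of $E_b\setminus E_a$, and in the uniform case $E_k$ may then contain a vertex outside $E_a\cup E_b$, hence possibly outside $W$, so the restricted order need not remain a shelling. Thus there is no genuine obstacle for the strong version; the induced subclutter inherits a strong shelling order simply by passing to the subsequence.
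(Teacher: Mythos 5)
Your proof is correct and is essentially identical to the paper's own argument: both restrict the given strong shelling order to $\calE(\calC_W)$ and observe that the sandwich condition $E_k\subseteq E_a\cup E_b\subseteq W$ automatically forces the witnessing edge to lie in $\calC_W$. Your closing remark explaining why this fails for ordinary shellability is accurate but not needed for the proof itself.
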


\begin{proof}
    Suppose that $\succ$ gives a strong shelling order on $\calE(\calC)$ and let $\succ'$ be its restriction on $\calE(\calC_W)$. Take arbitrary distinct $F,G\in \calC_W$ with $F\succ' G$. Then $F\succ G$ with $F\subseteq W$ and $G\subseteq W$. By the strong shellability of $\succ$, one can find suitable $H\in \calE(\calC)$ such that $H\succ G$, $\dis(G,H)=1$ and $F\cap G \subseteq H \subseteq F\cup G$. It follows immediately that $H\subseteq W$, and therefore $H\in \calE(\calC_W)$. By the definition of $\succ'$, we have $H\succ' G$ such that $\dis(G,H)=1$ and $F\cap G \subseteq H \subseteq F\cup G$. This shows that $\succ'$ is a strong shelling order and $\calC_W$ is ESS.
\end{proof}

\subsection{Perfect elimination order, chordality and ESS graphs}
\begin{proof}
    [Proof of $\ref{SSGC-5}\Rightarrow \ref{SSGC-1}$ in Theorem \ref{ssg and chordal graph}]
    We will prove by induction on $|\calV(G)|$ with the base case of $|\calV(G)|=2$ being trivial. Now, suppose that $|\calV(G)|=n\ge 3$ and there exists a perfect elimination order on $\overline{G}$: $v_1, \dots, v_n$. It follows from Lemma \ref{peo-restriction} that $v_{1}, \dots, v_{n-1}$ is also a perfect elimination order for the induced subgraph $\overline{G}_{\{v_1,\dots,v_{n-1}\}}$. By induction, the induced subgraph $G'\coloneqq G_{\{v_1,\dots,v_{n-1}\}}$, as the complement of $\overline{G}_{\{v_1,\dots,v_{n-1}\}}$, is ESS. Therefore, there exists a strong shelling order $\succ'$ on the edge set $\calE(G')$: $e_1, \dots, e_t$. We can define a total order $\succ$ on $\calE(G)$ as follows.
    \begin{enumerate}[a]
        \item If $e_i, e_j \in \calE(G')$, $e_i \succ e_j$ if and only if $e_i \succ' e_j$.
        \item For the edges $\{v_i, v_n\}$ and $\{v_j, v_n\}$, $\{v_i, v_n\} \succ \{v_j, v_n\}$ if and only if $i<j$.
        \item  For each edge $e_i \in \calE(G')$ and edge $\{v_j, v_n\}$, $e_i \succ \{v_j, v_n\}$.
    \end{enumerate}

    In the following, we will show that the order $\succ$ is a strong shelling order on $\calE(G)$. In fact, by Remark \ref{ESS-graph}, we only need to show that for an edge $e_i \in \calE(G')$ and an edge $\{v_j, v_n\}$, if they are not adjacent, then there exists an edge $e \succ \{v_j, v_n\}$ which is adjacent to both $e_i$ and $\{v_j, v_n\}$.

    Assume that $e_i=\{v_l, v_k\}$ and $l<k$. 
    Then $G''\coloneqq \bar{G}_{\{v_l,v_j,v_k,v_n\}}$ has an induced perfect elimination order by Lemma \ref{peo-restriction}.

    Now, consider the following two cases.

    \begin{enumerate}[a]
        \item  If $j < l$, then either $\{v_j, v_l\} \in \calE(G)$ or $\{v_j, v_k\} \in \calE(G)$. In fact, if neither happens, then 
            $\{v_l, v_k\} \in \calE(\overline{G})$ by the definition of perfect elimination ordering. It contradicts to the fact that $\{v_l, v_k\} \in \calE(G)$. Note that $\{v_j, v_l\} \succ \{v_j, v_n\}$ and $\{v_j, v_k\} \succ \{v_j, v_n\}$ in this case.
        \item If $l < j$, then, as the discussion above, either $\{v_l, v_j\} \in \calE(G)$ or $\{v_l, v_n\} \in \calE(G)$. Note that $\{v_l, v_j\} \succ \{v_j, v_n\}$ and $\{v_l, v_n\} \succ \{v_j, v_n\}$ in this case.
    \end{enumerate}

    In both cases, we are able to find an edge $e \succ \{v_j, v_n\}$ which is adjacent to both $\{v_l, v_k\}$ and $\{v_j, v_n\}$. This completes the proof.
\end{proof}

\begin{proof}
    [Proof of $\ref{SSGC-1}\Rightarrow \ref{SSGC-4}$ in Theorem \ref{ssg and chordal graph}]
    Assume that $G$ is ESS. Then, for each pair of edges $\{v_i, v_j\}$ and $\{v_l, v_k\}$ with no common vertex, they are adjacent to a common edge by Remark \ref{ESS-graph}. This implies that one of $\{v_i, v_l\}$, $\{v_i, v_k\}$, $\{v_j, v_l\}$ and $\{v_j, v_k\}$ is an edge in $\calE(G)$. In other words, any cycle with 4 vertices in $\overline{G}$ must have a chord.

    We will complete the proof by contradiction. If $\overline{G}$ is not chordal, then there exists a minimal cycle with more than 3 vertices in $\overline{G}$. Let $e_1, \dots, e_t$ be a strong shelling order on $\calE(G)$.  Let $G_s=G\setminus \Set{e_t,e_{t-1},\dots,e_{s+1}}$. Note that removing edges successively from $\calE(G)$ amounts to adding corresponding edges successively to $\calE(\overline{G})$. Therefore, we can find suitable $s\ge 1$ such that $\overline{G_s}$ contains a minimal cycle with $4$ vertices. Note that $e_1, \dots, e_s$ is a strong shelling order on $\calE(G_s)$. This provides a contradiction.
\end{proof}

\subsection{Linear resolution and ESS graphs}

Given a finite simple graph $G$, a simple graph $H$ is called a \Index{quotient graph} of $G$ if there exists a surjective map $f:\calV(G)\to \calV(H)$, such that
\begin{enumerate}[a]
    \item if $u_1,u_2\in \calV(G)$ such that $\{u_1,u_2\}\in \calE(G)$ and $f(u_1)\ne f(u_2)$, then $\{f(u_1),f(u_2)\}\in \calE(H)$;
    \item if $\{v_1,v_2\}\in \calE(H)$, then there exists $\{u_1,u_2\}\in \calE(G)$ such that $f(u_1)=v_1$ and $f(u_2)=v_2$.
\end{enumerate}
The map $f$ here will be called a \Index{quotient map}.
For simplicity, we will denote the edge $\{f(u_1),f(u_2)\}$ by $f(\{u_1,u_2\})$.

Note that if the edge ideals $I(G)\subset S=\KK[x_g:g\in G]$ and $I(H)\subset T=\KK[x_h:h\in H]$ and let $\varphi_f$ be the natural map from $S$ to $T$ sending $x_g$ with $g\in G$ to $x_{f(g)}$ with $f(g)\in H$, it is possible that $\varphi_f(I(G))\ne I(H)$. This happens if and only if
$\varphi_f(I(G))$ contains square monomials, i.e., for some $h\in H$, one can find some $g_1,g_2\in f^{-1}(h)$ such that $\{g_1,g_2\}\in \calE(G)$. Thus, we will call the quotient map $f$ to be \Index{proper}, if for each $h\in H$, ${f^{-1}(h)}$ is an independent subset of $\calV(G)$. Whence, $H$ is a \Index{proper quotient graph} of $G$.

\begin{example}
    Let $G$ be the simple graph on the left side of Figure \ref{Fig:quotient-graph} over the vertex set $\{a,b_1,b_2,c\}$.  Let $H$ be the simple graph on the right side of Figure \ref{Fig:quotient-graph} over the vertex set $\{a,b,c\}$. The map $f:G\to H$ sending both $b_1$ and $b_2$ to $b$ while keeping other vertices, is a non-proper quotient map. The image of the edge ideal $I(G)$ is $\braket{ab,b^2,bc}$, which is not $I(H)=\braket{ab,bc}$.
    \begin{figure}[!ht]
        \ffigbox[\FBwidth]{}{{
        \begin{subfloatrow}[2]
            \ffigbox{\caption{$G$}}{
            \begin{minipage}[h]{\linewidth} \centering
                \includegraphics{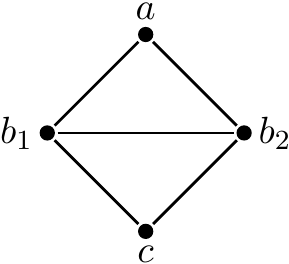}
            \end{minipage}
            }
            \ffigbox{\caption{$H=f(G)$}}{
            \begin{minipage}[h]{\linewidth} \centering
                \includegraphics{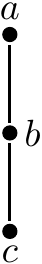}
            \end{minipage}
            }
        \end{subfloatrow}}
        \caption{Quotient graph} \label{Fig:quotient-graph}}
    \end{figure}
\end{example}

\begin{lemma}
    \label{quotient-graph}
    Let $G$ be an ESS graph. If $H$ is a quotient graph of $G$,  then $H$ is also ESS.
\end{lemma}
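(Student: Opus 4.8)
The plan is to transport a strong shelling order from $\calE(G)$ to $\calE(H)$ along the quotient map $f$, using the combinatorial reformulation in Remark \ref{ESS-graph}: it suffices to produce a linear order $\succ_H$ on $\calE(H)$ such that whenever $\epsilon_1 \succ_H \epsilon_2$ are \emph{disjoint} edges of $H$, some $\epsilon_3 \succ_H \epsilon_2$ is adjacent to both. So I would fix a strong shelling order $\succ$ on $\calE(G)$. For an edge $E=\{u,w\}$ of $G$ with $f(u)\ne f(w)$, write $f(E)\coloneqq \{f(u),f(w)\}$, which is an edge of $H$ by property (a) of a quotient map. For each $\epsilon\in\calE(H)$ set $P(\epsilon)\coloneqq \Set{E\in\calE(G): f(E)=\epsilon}$. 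Property (b) guarantees $P(\epsilon)\ne\varnothing$, and the sets $P(\epsilon)$ are pairwise disjoint as $\epsilon$ ranges over $\calE(H)$, since $f(E)$ is determined by $E$; note that edges of $G$ collapsed by a non-proper $f$ simply lie in no $P(\epsilon)$ and will be harmless.

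First I would define $\succ_H$ by \emph{maximal representatives}: let $M(\epsilon)$ be the $\succ$-largest edge in $P(\epsilon)$, and declare $\epsilon\succ_H\epsilon'$ exactly when $M(\epsilon)\succ M(\epsilon')$. Because the sets $P(\epsilon)$ are disjoint, the edges $M(\epsilon)$ are distinct, so $M$ is injective and $\succ_H$ is a genuine linear order on $\calE(H)$.

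To verify the shelling condition I would take disjoint edges $\epsilon_1\succ_H\epsilon_2$ of $H$ and lift them to $E_1\coloneqq M(\epsilon_1)$ and $E_2\coloneqq M(\epsilon_2)$, so that $E_1\succ E_2$. These lifts are disjoint in $G$, for a common vertex $u\in E_1\cap E_2$ would force $f(u)\in\epsilon_1\cap\epsilon_2=\varnothing$. Hence Remark \ref{ESS-graph}, applied to $G$, yields an edge $E_3\succ E_2$ adjacent to both $E_1$ and $E_2$; since $E_1,E_2$ are disjoint, necessarily $E_3=\{x,y\}$ with $x\in E_1$ and $y\in E_2$. Then $f(x)\in\epsilon_1$ and $f(y)\in\epsilon_2$ are distinct (again because $\epsilon_1\cap\epsilon_2=\varnothing$), so $\epsilon_3\coloneqq f(E_3)$ is an edge of $H$ sharing $f(x)$ with $\epsilon_1$ and $f(y)$ with $\epsilon_2$, i.e.\ adjacent to both. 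It then remains to check $\epsilon_3\succ_H\epsilon_2$, and since $E_3\in P(\epsilon_3)$ we have $M(\epsilon_3)\succeq E_3\succ E_2=M(\epsilon_2)$, whence $\epsilon_3\succ_H\epsilon_2$, as needed.

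The only real obstacle I anticipate is this final consistency step, not the lifting itself: an arbitrary choice of representative of $\epsilon_3$ could place $\epsilon_3$ \emph{before} $\epsilon_2$ in $\succ_H$ and break the shelling condition. Choosing the $\succ$-largest preimage $M(\cdot)$ is exactly what resolves it, because the lifted witness $E_3$ is automatically dominated by $M(\epsilon_3)$ while $\epsilon_2$ is itself lifted to its maximum $E_2$. No case analysis beyond Remark \ref{ESS-graph} is required, and the adjacency-only reduction there is what lets me ignore the (automatically satisfied) pairs of adjacent edges of $H$.
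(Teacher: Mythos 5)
Your proof is correct, and it follows essentially the same strategy as the paper's own proof: transport the strong shelling order along $f$ by fixing one distinguished preimage for each edge of $H$, lift a disjoint pair of edges of $H$ to their representatives in $G$, invoke strong shellability there via Remark \ref{ESS-graph}, and push the witness back down. The one substantive point of comparison is the choice of representative, and here your version is the more careful one. The paper takes for each $e \in \calE(H)$ the \emph{least} edge $\tilde{e}$ of $f^{-1}(e)$ with respect to $\succ_G$; read literally as the $\succ_G$-minimum, this does not support the final step, since from a witness $E_3 \in f^{-1}(e_3)$ with $E_3 \succ_G \tilde{e}_2$ one only gets $\tilde{e}_3 \preceq_G E_3$, which yields no comparison between $\tilde{e}_3$ and $\tilde{e}_2$. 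Your choice of the $\succ$-largest preimage $M(\epsilon)$ is exactly what is needed, as you observe: $M(\epsilon_3) \succeq E_3 \succ E_2 = M(\epsilon_2)$, hence $\epsilon_3 \succ_H \epsilon_2$. (In the paper's conventions, where $F \succ F'$ means that $F$ precedes $F'$ in the shelling sequence, the intended representative is evidently the preimage occurring \emph{earliest} in the sequence, i.e.\ the $\succ_G$-greatest one, so the paper's ``least'' should be read as ``first''; your formulation removes this ambiguity.) Your handling of edges collapsed by a non-proper $f$ is also sound and matches the paper in substance: you note such edges never arise as witnesses for a disjoint lifted pair, while the paper notes that a collapsed witness would force $e_1$ and $e_2$ to be adjacent; both observations dispose of the same case.
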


\begin{proof}
    Let $\succ_G$ be a strong shelling order on $\calE(G)$.  Assume that $f:\calV(G)\to \calV(H)$ is the quotient map.  For each $e\in \calE(H)$, let $\tilde{e}$ be the least edge in $f^{-1}(e)$ with respect to $\succ_G$.  We define the induced order $\succ_H$ as follows:
    \[
    \text{$e_1\succ_H e_2$ if and only if $\tilde{e}_1\succ_G \tilde{e}_2$.}
    \]
    Since $G$ is ESS, when $e_1\succ_H e_2$, either $\tilde{e}_1$ is adjacent to $\tilde{e}_2$, or $\tilde{e}_1$ can be connected with $\tilde{e}_2$ by some edge $\{u_1,u_2\}$ such that $\{u_1,u_2\}\succ_G \tilde{e}_2$. In the former case, $e_1$ is adjacent to $e_2$. In the latter case, if $f(u_1)\ne f(u_2)$, then $e_1$ can be connected with $e_2$ by $e_3=f(\{u_1,u_2\})$ with the property that $e_3\succ_H e_2$. Otherwise, $f(u_1)=f(u_2)$. Hence $e_1$ and $e_2$ are adjacent.
\end{proof}

Let $G$ be a finite simple graph. A \Index{blow-up} of $G$ at the vertex $v\in \calV(G)$ is the new graph $H$ such that
\begin{enumerate}[a]
    \item $\calV(H)=(\calV(G)\setminus{v})\cup \{v_1,\dots,v_m\}$ with $\{v_1,\dots,v_m\}\cap \calV(G)=\varnothing$;
    \item $\calE(H)$ consists of the following two parts:
        \begin{enumerate}[i]
            \item $\calE(G\setminus v)$;
            \item $\big\{ \{v_i,u\} \mid 1\le i \le m \text{ and } \{v,u\}\in \calE(G)\big\}$.
        \end{enumerate}
\end{enumerate}
If graph $G'$ can be obtained from $G$ by a sequence of blow-ups, $G'$ will be called a \Index{blow-up graph} of $G$.

\begin{lemma}
    Let $H$ be a blow-up graph of $G$. If $G$ is ESS, then so is $H$.
    \label{blow-up}
\end{lemma}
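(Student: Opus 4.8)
The plan is to recognize a single blow-up as an instance of the \emph{simplicial-complex} expansion from Section~2 and then invoke Theorem~\ref{expansion-complex}. Since a blow-up graph is by definition obtained from $G$ by a finite sequence of single-vertex blow-ups $G=G_0,G_1,\dots,G_r=H$, it suffices by induction on $r$ to treat one blow-up: assuming $G'$ is ESS, I will show that the graph $G''$ obtained by blowing up a single vertex $v$ of $G'$ into $v_1,\dots,v_m$ is again ESS.

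To do this, let $\Delta'$ be the one-dimensional simplicial complex whose facet set is $\calE(G')$, so that ``$G'$ is ESS'' means exactly that $\Delta'$ is strongly shellable. Set $s_v=m$ and $s_u=1$ for every other vertex $u$, and form the expansion $(\Delta')^{(s_1,\dots,s_n)}$. I claim that, after the evident relabeling $x_{v,j}\mapsto v_j$ and $x_{u,1}\mapsto u$, the facet set of $(\Delta')^{(s_1,\dots,s_n)}$ is precisely $\calE(G'')$. Indeed, a facet of $\Delta'$ is an edge of $G'$; an edge $\{v,u\}$ incident to $v$ expands to the facets $\{v_j,u\}$ for $1\le j\le m$, which are exactly the new edges of $G''$, while an edge $\{u,w\}$ avoiding $v$ expands only to itself and thus reproduces $\calE(G'\setminus v)$. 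Hence $(\Delta')^{(s_1,\dots,s_n)}$ is the one-dimensional complex of $G''$, and Theorem~\ref{expansion-complex} yields that $G''$ is strongly shellable if and only if $G'$ is. This closes the induction and proves the lemma.

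The key point to verify carefully---and the one place where an error could creep in---is that the blow-up matches the simplicial-complex expansion of Section~2 rather than the clutter expansion introduced in Section~4. The difference is exactly the extra edges $\{x_{i,j},x_{i,k}\}$ joining distinct copies of a single vertex: the clutter expansion inserts them, whereas the blow-up (whose copies $v_1,\dots,v_m$ form an independent set, each joined only to the neighbors of $v$) does not. Since the expansion appearing in Theorem~\ref{expansion-complex} is the simplicial-complex expansion, no such clique edges arise, and the two facet sets genuinely coincide. A minor bookkeeping remark is that isolated vertices of $G'$ (and their blow-up copies) lie in no edge and are therefore irrelevant to strong shellability of the edge set; one may simply discard them, so there is no loss in assuming every facet of $\Delta'$ is an edge and that $(\Delta')^{(s_1,\dots,s_n)}$ is again the one-dimensional complex of an honest graph. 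Once this identification is in place, the entire statement is an immediate consequence of Theorem~\ref{expansion-complex}.
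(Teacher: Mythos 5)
Your proof is correct and takes essentially the same route as the paper: both reduce to a single blow-up, identify it with the simplicial-complex expansion $\Delta^{(s_1,1,\dots,1)}$ of Section~2, and conclude via Theorem~\ref{expansion-complex}. Your explicit checks (the induction over a sequence of blow-ups, the distinction from the clutter expansion with its clique edges, and the treatment of isolated vertices) merely make precise what the paper leaves implicit.
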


\begin{proof}
    Without loss of generality, we may assume that
    $\calV(G)=\Set{x_1,\dots,x_n}$ and
    $H$ is a blow-up of $G$ at the vertex $x_1$. Let $\Delta$ and $\Delta'$ be the one-dimensional pure simplicial complexes whose facet graphs are $G$ and $H$ respectively. Then $\Delta'=\Delta^{(s_1,1,\dots,1)}$ for some positive integer $s_1$. Now, we apply the ``only if'' part of Theorem \ref{expansion-complex}.
\end{proof}

Let $T$ be a tree, i.e., a connected finite simple graph which contains no cycle.  Without loss of generality, we may assume that $\calV(T)=[n]$.  Note that for arbitrary vertices $i$ and $j$ of $T$, there exists a unique path $P:i=i_0,i_1,\dots,i_r=j$ from $i$ to $j$. Following \cite{MR2457194}, $b(i,j)\coloneqq i_1$ is called the \Index{begin} of $P$ and $e(i,j)=i_{r-1}$ is the \Index{end} of $P$. We can attach a \Index{generic matrix} $A(T)$ to $T$ as follows. Assume that $\calE(T)=\Set{e_1,\dots,e_{n-1}}$. For each edge $e_k=\{i,j\}$ with $i<j$, the $k$-th row of $A(T)$ is
\[
r_k=-x_{i,j}\varepsilon_i+x_{j,i}\varepsilon_{j}.
\]
Here $\varepsilon_i$ is the $i$-th canonical unit vector in $\RR^n$.

On the other hand, after \cite{arXiv:1508.07119}, we can associate a special graph $G_T$ to the tree $T$.  The vertices of the graph $G_T$ is given by
\[
\calV(G_T)=\big\{x_{i,j},x_{j,i}\mid \{i,j\}\in \calE(T)\big\}.
\]
And $\{x_{i,k},x_{j,l}\}$ is an edge of $G_T$ if and only if there exists a path $P$ from $i$ to $j$ such that $k=b(i,j)$ and $l=e(i,j)$. The graph $G_T$ will be called the \Index{generic bi-CM graph} (abbreviated as \Index{generic graph}) attached to $T$.

\begin{example}
    \label{example-2-trees}
    For instance, for the two trees in Figure \ref{trees}, we have the corresponding generic matrices
    \[
    A(T_1)=
    \begin{pmatrix}
        -x_{1,2} & x_{2,1} & 0 & 0 \\
        -x_{1,3} & 0 & x_{3,1} & 0 \\
        -x_{1,4} & 0 & 0 & x_{4,1}
    \end{pmatrix}
    \]
    and
    \[
    A(T_2)=
    \begin{pmatrix}
        -x_{1,2} & x_{2,1} & 0 & 0 \\
        0 & -x_{2,3} & x_{3,2} & 0 \\
        0 & 0 & -x_{3,4} & x_{4,3}
    \end{pmatrix},
    \]
    and the attached generic graphs in Figure \ref{GT}.
\end{example}

\begin{figure}[!ht]
    \ffigbox[\FBwidth]{}{{
    \begin{subfloatrow}[2]
        \ffigbox{\caption{$T_1$}}{
            \begin{minipage}[h]{\linewidth} \centering
                \includegraphics{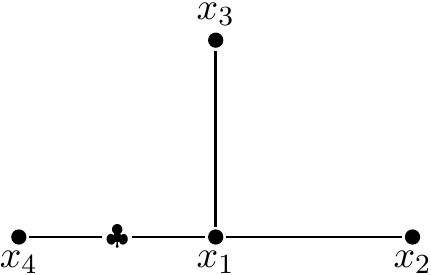}
            \end{minipage}
        }
        \ffigbox{\caption{$T_2$}}{
            \begin{minipage}[h]{\linewidth} \centering
                \includegraphics{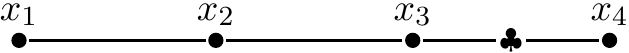}
            \end{minipage}
        }
    \end{subfloatrow}}
    \caption{Some trees} \label{trees} }
\end{figure}

\begin{figure}[!ht]
    \ffigbox[\FBwidth]{}{{
    \begin{subfloatrow}[2]
        \ffigbox{\caption{$G_{T_1}$}}{
            \begin{minipage}[h]{\linewidth} \centering
                \includegraphics{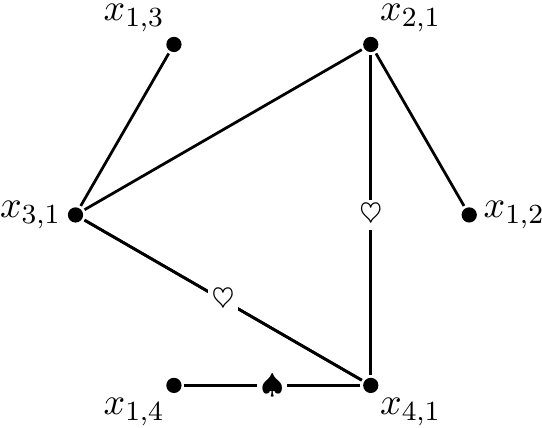}
            \end{minipage}
        }
        \ffigbox{\caption{$G_{T_2}$}}{
            \begin{minipage}[h]{\linewidth} \centering
                \includegraphics{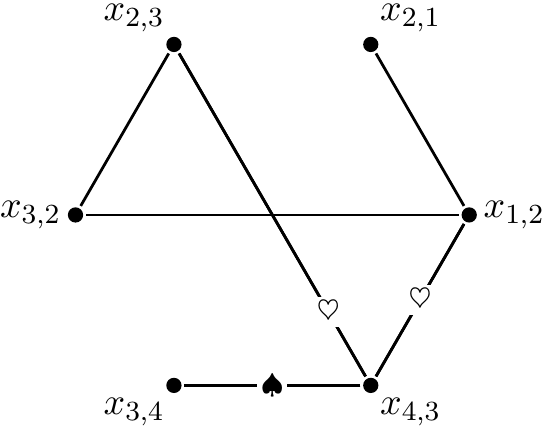}
            \end{minipage}
        }
    \end{subfloatrow}}
    \caption{Corresponding generic graphs} \label{GT} }
\end{figure}

In the following, we will discuss the properties of the generic graph of a tree with at least $3$ vertices, since the $2$-vertices case is clear.   The following observations are easy to check.

\begin{observation}
    \label{obs}
    \begin{enumerate}[a,leftmargin=6mm]
        \item If $i$ is a leaf in $T$, and $k$ is the unique vertex adjacent to $i$, then in the generic graph $G_T$, $x_{k, i}$ is a leaf, and $x_{i, k}$ is the unique vertex adjacent to $x_{k, i}$ in $G_T$.

        \item Let $i, j$ be two leaves in $I$, and let $k, l$ be the unique vertices adjacent to $i$ and $j$ respectively. Then in the unique path connecting $i$ and $j$, $b(i,j)=k$ and $e(i,j)=l$. Hence in the generic graph $G_T$, $\{x_{i,k}, x_{j,l}\} \in \calE(G_T)$.
    \end{enumerate}
\end{observation}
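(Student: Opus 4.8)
The plan is to prove both parts by directly unwinding the definition of the generic graph $G_T$ together with the definitions of the begin $b(i,j)$ and end $e(i,j)$ of a path. Throughout, recall that a vertex $x_{a,b}$ of $G_T$ records an endpoint $a$ of an edge $\{a,b\}\in\calE(T)$ together with its other endpoint $b$, and that $\{x_{a,b},x_{c,d}\}\in\calE(G_T)$ precisely when the unique path from $a$ to $c$ in $T$ satisfies $b=b(a,c)$ and $d=e(a,c)$. The single fact I will use repeatedly is the leaf property: if $i$ is a leaf with unique neighbor $k$, then any nontrivial path in $T$ having $i$ as an endpoint must use the edge $\{i,k\}$ as its first (or last) step, since $i$ has no other incident edge.

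For part (a), I will enumerate all edges of $G_T$ incident to $x_{k,i}$. Writing such an edge as $\{x_{k,i},x_{p,q}\}$ and applying the definition with $x_{k,i}$ playing the role of the path's starting endpoint, it is an edge exactly when the path from $k$ to $p$ has $b(k,p)=i$. The condition $b(k,p)=i$ forces this path to begin $k,i,\dots$; but $i$ is a leaf whose only neighbor is $k$, so the path cannot proceed beyond $i$ (its vertices must be distinct), whence $p=i$ and the path is simply $k,i$. For that path one reads off $e(k,i)=k$, so $q=k$. Hence the only edge meeting $x_{k,i}$ is $\{x_{k,i},x_{i,k}\}$, which says exactly that $x_{k,i}$ is a leaf of $G_T$ with unique neighbor $x_{i,k}$. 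Here I will note once that the adjacency relation of $G_T$ is symmetric --- reversing a path from $a$ to $c$ interchanges the roles of $b(\cdot,\cdot)$ and $e(\cdot,\cdot)$ --- so that listing the edges with $x_{k,i}$ in the first slot indeed captures every incident edge.

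For part (b), let $i,j$ be distinct leaves with unique neighbors $k,l$ and consider the unique path $i=i_0,i_1,\dots,i_r=j$. Since $i$ is a leaf whose only neighbor is $k$, the first step is $i_1=k$, so $b(i,j)=k$; symmetrically, since $j$ is a leaf whose only neighbor is $l$, the last step is $i_{r-1}=l$, so $e(i,j)=l$. Feeding these two equalities into the defining condition for edges of $G_T$ yields $\{x_{i,k},x_{j,l}\}\in\calE(G_T)$ at once. There is essentially no obstacle in either part: the entire content is the bookkeeping of which incident edge of a leaf a path is forced to traverse, and the only point deserving a word of care is the symmetry of the adjacency relation invoked in part (a).
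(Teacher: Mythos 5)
Your proof is correct and is exactly the intended argument: the paper states this Observation without proof (``easy to check''), and your direct unwinding of the definition of $\calE(G_T)$ together with the leaf property of $T$ is that check, including the one point genuinely worth noting---that adjacency in $G_T$ is symmetric under path reversal, so enumerating edges with $x_{k,i}$ in the first slot suffices.
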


\begin{lemma}
    \label{replaced by leaf}
    Let $G_T$ be the generic graph of a tree $T$. If $x_{i_1,k_1}, x_{i_2,k_2}, \dots, x_{i_t,k_t}$ are adjacent to $x_{j_1,l_1}$ in $G_T$, then there exists a leaf $x_{l_2, j_2}$, such that $x_{j_2,l_2}$ is adjacent to all of $x_{i_1,k_1}, x_{i_2,k_2}, \dots, x_{i_t,k_t}$.
\end{lemma}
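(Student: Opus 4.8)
The plan is to reinterpret adjacency in $G_T$ via paths in $T$ and then to slide the common neighbor $x_{j_1,l_1}$ outward to a leaf of $T$. Recall that $\{x_{a,b},x_{c,d}\}\in\calE(G_T)$ precisely when the unique path in $T$ from $a$ to $c$ leaves $a$ through $b$ and enters $c$ through $d$, that is, $b=b(a,c)$ and $d=e(a,c)$. Under this dictionary, the hypothesis asserts that for every $s$ the path $P_s$ from $i_s$ to $j_1$ satisfies $b(i_s,j_1)=k_s$ and $e(i_s,j_1)=l_1$; in particular each $i_s\neq j_1$.

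The first step is to exploit that all these edges share the \emph{same} second coordinate $l_1$. Since $e(i_s,j_1)=l_1$ for every $s$, each path $P_s$ enters $j_1$ through the fixed neighbor $l_1$; hence every $i_s$ lies in the component $D$ of $T\setminus\{j_1\}$ containing $l_1$. Thus all the $i_s$ approach $j_1$ from one and the same side, whereas the begin-coordinates $k_s$ only record how the paths leave the various $i_s$ deep inside $D$.

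Next I would produce the leaf. If $j_1$ is already a leaf of $T$, then $l_1$ is its unique neighbor and I simply take $j_2=j_1$, $l_2=l_1$: by Observation~\ref{obs}(a) the vertex $x_{l_1,j_1}$ is a leaf of $G_T$, while $x_{j_1,l_1}$ is adjacent to all of $x_{i_1,k_1},\dots,x_{i_t,k_t}$ by hypothesis. Otherwise $\deg_T(j_1)\ge 2$, so $T\setminus\{j_1\}$ has a component $D'$ distinct from $D$. In the subtree induced on $D'\cup\{j_1\}$ the vertex $j_1$ has degree one, so this subtree possesses a further leaf $j_2\in D'$; since the $T$-neighbors of $j_2$ all lie in $D'\cup\{j_1\}$, they coincide with its neighbors in the subtree, and therefore $j_2$ is in fact a leaf of $T$. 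Let $l_2$ denote its unique neighbor.

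Finally I would check that the new vertex works. As $j_2\in D'$ while every $i_s\in D$, the vertex $j_1$ lies on the path in $T$ from $i_s$ to $j_2$, so that path is $P_s$ followed by the path from $j_1$ to $j_2$. Its first edge is the first edge of $P_s$, whence $b(i_s,j_2)=b(i_s,j_1)=k_s$; and because $j_2$ is a leaf, $e(i_s,j_2)=l_2$ automatically. Therefore $\{x_{j_2,l_2},x_{i_s,k_s}\}\in\calE(G_T)$ for every $s$, and by Observation~\ref{obs}(a) the vertex $x_{l_2,j_2}$ is a leaf of $G_T$, as required. The crux of the argument is the realization that it is the \emph{common end-coordinate} $l_1$, rather than the individual begin-coordinates $k_s$, that confines all the $i_s$ to one side of $j_1$; this is exactly what permits replacing $j_1$ by a leaf on the opposite side without disturbing any begin-coordinate. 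The only technical points requiring care are the degenerate case in which $j_1$ is itself a leaf and the verification that the component $D'$ genuinely contains a leaf of $T$.
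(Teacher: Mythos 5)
Your proof is correct and follows essentially the same route as the paper's: split into the cases where $j_1$ is or is not a leaf of $T$, and in the latter case pick a leaf $j_2$ of $T$ lying in a component of $T\setminus\{j_1\}$ other than the one containing $l_1$ (the paper phrases this as choosing $j_2$ with $b(j_1,j_2)\neq l_1$), so that every path from $i_s$ to $j_2$ passes through $j_1$ and keeps its begin-coordinate $k_s$. The only difference is that you spell out the details --- the component argument and the path concatenation --- that the paper dismisses as ``clear'' in its verification of $N(x_{j_1,l_1})\subseteq N(x_{j_2,l_2})$.
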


\begin{proof}
    If $j_1$ is a leaf in the graph $T$, then $x_{l_1, j_1}$ is a leaf in $G_T$ satisfying the requirement.

    If $j_1$ is not a leaf in the graph $T$, then there exists a leaf $j_2$, such that $b(j_1, j_2) \neq l_1$. Assume that $e(j_1, j_2) = l_2$. It is clear that $x_{l_2, j_2}$ is a leaf in the generic graph $G_T$, and $N(x_{j_1,l_1}) \subseteq N(x_{j_2,l_2})$.
\end{proof}

Given a finite simple graph $G$, the \Index{clique number} of $G$ is the largest cardinality of the cliques in $G$.

\begin{proposition}
    \label{clique number}
    Let $G_T$ be the generic graph of a tree $T$. Then the following quantities coincide:
    \begin{enumerate}[1]
        \item \label{q-1} The clique number of $G_T$;
        \item \label{q-2} the number of leaves in $G_T$;
        \item \label{q-3} the number of leaves in $T$.
    \end{enumerate}
\end{proposition}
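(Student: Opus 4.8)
The plan is to establish the three-way equality by first determining exactly which vertices of $G_T$ are leaves, and then trapping the clique number between the number of leaves of $T$ from both directions.

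\emph{Identifying the leaves (\ref{q-2} $=$ \ref{q-3}).} I would begin by computing the degree of an arbitrary vertex $x_{a,b}$ of $G_T$. Let $T_b$ denote the connected component containing $b$ after deleting the edge $\{a,b\}$ from $T$. The adjacency rule for $G_T$ shows that $x_{a,b}$ is joined to $x_{j,l}$ exactly when $j\in T_b$ (so that the path from $a$ to $j$ starts with the edge $\{a,b\}$, i.e. $b(a,j)=b$) and $l=e(a,j)$ is then forced. Distinct choices of $j\in T_b$ give distinct neighbours, so $\deg_{G_T}(x_{a,b})=|T_b|$. Hence $x_{a,b}$ is a leaf of $G_T$ if and only if $T_b=\{b\}$, i.e. if and only if $b$ is a leaf of $T$ with unique neighbour $a$. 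The map $x_{a,b}\mapsto b$ is then a bijection between the leaves of $G_T$ and the leaves of $T$, yielding \ref{q-2} $=$ \ref{q-3}. (One direction is already Observation \ref{obs}(a).)

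\emph{Lower bound (\ref{q-1} $\ge$ \ref{q-3}).} For each leaf $i$ of $T$ write $k_i$ for its unique neighbour. By Observation \ref{obs}(b), for any two distinct leaves $i,j$ the pair $\{x_{i,k_i},x_{j,k_j}\}$ is an edge of $G_T$. Therefore $\Set{x_{i,k_i}: i \text{ a leaf of } T}$ is a clique of $G_T$ whose cardinality is the number of leaves of $T$, which gives the lower bound.

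\emph{Upper bound (\ref{q-1} $\le$ \ref{q-3}).} This is the main obstacle. Let $C=\Set{x_{a_1,b_1},\dots,x_{a_m,b_m}}$ be a clique. I would first note that the base vertices $a_1,\dots,a_m$ are pairwise distinct, since two vertices $x_{a,b},x_{a,b'}$ sharing a base can never be adjacent (adjacency would require a nontrivial path from $a$ to itself). The clique condition $b(a_s,a_t)=b_s$, valid for every $t\ne s$, then says that every other base lies on the $b_s$-side of the edge $\{a_s,b_s\}$. Writing $T_s^{-}$ for the component containing $a_s$ after deleting $\{a_s,b_s\}$ from $T$, this means $a_t\notin T_s^{-}$ for all $t\ne s$. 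The sets $T_1^{-},\dots,T_m^{-}$ are pairwise disjoint: if $z\in T_s^{-}\cap T_t^{-}$ with $s\ne t$, then $a_s$ lies on the path from $z$ to $a_t$ while $a_t$ lies on the path from $z$ to $a_s$, which forces $a_s=a_t$ in a tree, a contradiction. Finally, each $T_s^{-}$, being a full branch of $T$, contains a leaf of $T$: a vertex of $T_s^{-}$ at maximal distance from $b_s$ has all its $T$-neighbours closer to $b_s$, hence has degree $1$. Selecting one such leaf in each of the disjoint branches $T_s^{-}$ produces $m$ distinct leaves of $T$, so $m$ is at most the number of leaves of $T$. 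Combining the three steps gives \ref{q-1} $=$ \ref{q-2} $=$ \ref{q-3}. The crux is recognising that membership in a clique forces the base vertices to be mutually ``outward pointing'', so that they behave like the leaves of the subtree they span; converting this into disjoint branches each harbouring a genuine leaf of $T$ then closes the estimate. (Lemma \ref{replaced by leaf} could instead be iterated to replace each clique vertex by one based at a leaf of $T$, but the branch-disjointness argument is cleaner and avoids tracking possible collisions.)
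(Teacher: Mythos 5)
Your proposal is correct, and while your identification of the leaves and your lower bound agree in substance with the paper (the paper also proves \ref{q-2} $=$ \ref{q-3} via the bijection $x_{a,b}\leftrightarrow b$, just without your explicit degree formula $\deg_{G_T}(x_{a,b})=|T_b|$, and its clique of leaf-based vertices is exactly your Observation \ref{obs}(b) argument), your upper bound takes a genuinely different route. The paper proves \ref{q-1} $\le$ \ref{q-3} by neighborhood domination: in a maximum clique, every member $x_{i_a,k_a}$ not based at a leaf of $T$ is replaced, via Lemma \ref{replaced by leaf}, by a leaf-based vertex whose neighborhood contains $N(x_{i_a,k_a})$; after all replacements the clique consists of leaf-based vertices, of which there are only as many as leaves of $T$. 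You instead build an injection from the clique into the leaf set of $T$ directly: distinct bases, the clique condition $b(a_s,a_t)=b_s$ expelling every other base from the branch $T_s^{-}$, pairwise disjointness of the branches by the two-sided betweenness argument, and a leaf of $T$ inside each branch. Each route has its merits: the paper's is shorter once Lemma \ref{replaced by leaf} is in hand, and that lemma isolates the reusable fact that leaf-based vertices have maximal neighborhoods; yours is self-contained and, as your parenthetical notes, sidesteps the one delicate point the paper leaves unsaid, namely that a replacement vertex cannot collide with another clique member. (That point is in fact reparable in one line: if the replacement $x_{j',l'}$ equalled another member $x_{i_b,k_b}$, then $x_{i_b,k_b}\in N(x_{i_a,k_a})\subseteq N(x_{j',l'})$ would make $x_{j',l'}$ adjacent to itself, which is absurd; so the paper's proof also stands.) Your argument additionally gives a bit of structural insight the paper's does not: the bases of any clique in $G_T$ are distinct and point into pairwise disjoint branches of $T$.
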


\begin{proof}
    Take an arbitrary vertex $x_{l,j}\in \calV(G_T)$.
    If $j$ is a leaf in $T$, then $l$ is the unique vertex adjacent to $j$ in $G$. Thus, by Observation \ref{obs}, $x_{l,j}$ is a leaf in $G_T$.
    On the other hand, if $j$ is not a leaf in $T$, then there exists another vertex, say, $k$, adjacent to $j$ in $G$. As $\{x_{l,j}, x_{j,l}\}, \{x_{l,j}, x_{k,j}\} \in \calE(G_T)$, it is clear that $x_{l,j}$ is not a leaf in $G_T$. Hence, \ref{q-2} is identical to \ref{q-3}.

    Assume that $j_1, j_2, \dots, j_t$ are all the leaves in $T$, and the unique vertices adjacent to them are $l_1, l_2, \dots, l_t$ respectively. We claim that $\Set{x_{j_1,l_1}, x_{j_2,l_2}, \dots, x_{j_t,l_t}}$ is a maximum clique of $G_T$. If fact, for distinct $a,b \in [t]$, it is clear that $b(j_a, j_b)=l_a$ and $e(j_a, j_b)=l_b$. Therefore, $x_{j_1,l_1}, x_{j_2,l_2}, \dots, x_{j_t,l_t}$ is a clique of $G_T$.

    On the other hand, assume that $\Set{x_{i_1, k_1}, x_{i_2, k_2}, \dots, x_{i_s, k_s}}$ is also a maximum clique of $G_T$. By Lemma \ref{replaced by leaf}, if $x_{k_a,i_a}$ is not a leaf of $G_t$, we can replace $x_{i_a, k_a}$ by suitable $x_{j_{a'},l_{a'}}$ and get a clique of same size.
    As we will check all the vertices in this maximum clique and do the replacement whenever necessary, we arrive at the expected inequality $s \leq t$.
    Hence, the maximum clique number is indeed $t$, i.e., \ref{q-1} is identical to \ref{q-2}.
\end{proof}

Next, we study the diameter of the generic graph of a tree.

\begin{lemma}
    \label{there is edge}
    Let $G_T$ be the generic graph of a tree $T$. If $\{x_{i_1,k_1}, x_{j_1,l_1}\}$ and $\{x_{i_2,k_2}, x_{j_2,l_2}\}$ are two edges of $G_T$, then at least one of the following four edges belongs to $\calE(G_T)$: $\{x_{i_1,k_1}, x_{i_2,k_2}\}$, $\{x_{i_1,k_1}, x_{j_2,l_2}\}$, $\{x_{j_1,l_1}, x_{i_2,k_2}\}$, $\{x_{j_1,l_1}, x_{j_2,l_2}\}$.
\end{lemma}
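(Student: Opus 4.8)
My plan is to first strip the labels off the edges. An edge $\{x_{i,k},x_{j,l}\}$ of $G_T$ carries no more information than the unordered pair $\{i,j\}$, since the labels are forced: $k=b(i,j)$ and $l=e(i,j)=b(j,i)$. Thus the two given edges are encoded by the paths $\pi_1$ from $i_1$ to $j_1$ and $\pi_2$ from $i_2$ to $j_2$ in $T$, and the four candidate edges correspond to the four cross pairs $\{i_1,i_2\},\{i_1,j_2\},\{j_1,i_2\},\{j_1,j_2\}$. Introducing, for a vertex $v$ and a neighbor $w$, the set $S(v,w)=\{u\in\calV(T):b(v,u)=w\}$, I observe that the candidate $\{x_{i_1,k_1},x_{i_2,k_2}\}$ is a genuine edge precisely when $b(i_1,i_2)=k_1$ and $b(i_2,i_1)=k_2$, i.e. when $i_2\in S(i_1,k_1)$ and $i_1\in S(i_2,k_2)$, and similarly for the other three. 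First I would record the elementary facts $j_1\in S(i_1,k_1)$, $i_1\in S(j_1,l_1)$, together with the covering identity $S(i_1,k_1)\cup S(j_1,l_1)=\calV(T)$ and its $\pi_2$-analogue (a vertex escapes the first set only if it hangs off $\pi_1$ at $i_1$, and then lies in the second).

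Next I would phrase everything through nearest-point projections onto the two paths. Writing $m_1,m_1'$ for the projections of $i_2,j_2$ onto $\pi_1$ and $n_2,n_2'$ for those of $i_1,j_1$ onto $\pi_2$, the condition $i_2\in S(i_1,k_1)$ says exactly that $m_1\neq i_1$, and so on. Thus candidate (A) holds iff $m_1\neq i_1$ and $n_2\neq i_2$; (B) iff $m_1'\neq i_1$ and $n_2\neq j_2$; (C) iff $m_1\neq j_1$ and $n_2'\neq i_2$; (D) iff $m_1'\neq j_1$ and $n_2'\neq j_2$. The lemma is now a purely combinatorial assertion about where four projection points land.

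The argument then divides according to how $\pi_1$ and $\pi_2$ meet. If they share at most one vertex, the gate structure of the tree forces every point of $\pi_2$ to project to a single vertex $g_1\in\pi_1$ and every point of $\pi_1$ to a single $g_2\in\pi_2$; hence $m_1=m_1'=g_1$ and $n_2=n_2'=g_2$, and the disjunction of (A)--(D) factors as $\bigl(g_1\neq i_1\ \text{or}\ g_1\neq j_1\bigr)\ \text{and}\ \bigl(g_2\neq i_2\ \text{or}\ g_2\neq j_2\bigr)$, which holds automatically because $i_1\neq j_1$ and $i_2\neq j_2$. If instead $\pi_1$ and $\pi_2$ overlap in a segment $\sigma$ containing at least one edge, then the projections of $i_2,j_2$ onto $\pi_1$ are exactly the two endpoints of $\sigma$, and likewise on $\pi_2$, so $\{m_1,m_1'\}=\{n_2,n_2'\}$ equals the endpoint set of $\sigma$. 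Orienting both paths and comparing the order in which these two endpoints occur along $\pi_1$ and along $\pi_2$, the segment is traversed either in the same sense by the two paths or in opposite senses; reading off the resulting values of $m_1,m_1',n_2,n_2'$ shows that candidate (B) is an edge in the first case and candidate (A) in the second.

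The hard part will be the overlapping case. The covering identity alone does not suffice: one can consistently set $m_1=i_1$, $m_1'=j_1$, $n_2=j_2$, $n_2'=i_2$, which defeats all four candidates, and only the tree geometry — the fact that the four projection points are pinned to the two endpoints of $\sigma$ — excludes it. The remaining effort is bookkeeping: the degenerate sub-cases in which an endpoint of a path already lies on $\sigma$ (say $i_2=p$), and the limiting case where the two given edges coincide, should be absorbed by stating the order relations along each path with non-strict inequalities, so that conclusions such as $m_1'\neq i_1$ or $n_2\neq j_2$ persist unchanged; and I would verify the orientation conventions on a small explicit model (an ``H''-shaped tree) to confirm that the correct candidate, (A) or (B), is flagged in each sense.
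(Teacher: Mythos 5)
Your proposed argument is correct, but it takes a genuinely different route from the paper's. The paper's proof never analyzes how the two paths $\pi_1$ and $\pi_2$ intersect: it fixes the single vertex $k_2$, observes that for any vertex $a$ of $T$ the farthest vertex among $i_1,k_1,j_1,l_1$ is one of the endpoints $i_1,j_1$ (so, without loss of generality, $i_1$ is farthest from $k_2$), and then compares $d_T(i_1,i_2)$ with $d_T(i_1,k_2)$, which differ by exactly $1$ since $i_2$ and $k_2$ are adjacent; when the first is larger, the path from $i_1$ to $i_2$ begins at $k_1$ and ends at $k_2$, giving $\{x_{i_1,k_1},x_{i_2,k_2}\}$, and when it is smaller, the path from $i_1$ to $j_2$ begins at $k_1$ and ends at $l_2$, giving $\{x_{i_1,k_1},x_{j_2,l_2}\}$. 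Your route instead translates each of the four candidates into two inequalities on nearest-point projections (gates) and case-splits on $\pi_1\cap\pi_2$: at most one common vertex, where all four projections collapse to a single gate on each side and the disjunction factors as $\bigl(g_1\neq i_1 \text{ or } g_1\neq j_1\bigr)$ and $\bigl(g_2\neq i_2 \text{ or } g_2\neq j_2\bigr)$, versus a shared segment $\sigma$ containing an edge, where the projections are pinned to the two endpoints of $\sigma$ and the orientation comparison singles out your candidate (B) (same sense) or (A) (opposite sense). I checked your intermediate claims---the label-stripping, the equivalences between the four candidates and the projection conditions, the constancy of gates when $|\pi_1\cap\pi_2|\le 1$, and the pinning-plus-orientation analysis including the degenerate sub-cases where a path endpoint lies on $\sigma$ or the two edges coincide---and they all hold; in particular the needed inequalities in the overlap case follow just from the two endpoints of $\sigma$ being distinct. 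What the paper's argument buys is brevity: two cases, no intersection analysis, at the price of leaving the assertions ``it is easy to see'' and ``then clearly'' to the reader (their justification ultimately uses the same tree geometry that your gates make explicit). What your argument buys is precision: it identifies exactly which of the four candidates is realized as a function of the intersection pattern of the two paths, and it absorbs the degenerate configurations without separate treatment, at the cost of more setup and bookkeeping.
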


\begin{proof}
    As $\{x_{i_1,k_1}, x_{j_1,l_1}\} \in \calE(G_T)$, there exists a unique path in $T$ connecting $i_1$ and $j_1$, such that $b(i_1,j_1)=k_1$ and $e(i_1,j_1)=l_1$. It is easy to see that for any vertex $a$ in $T$, the farthest vertex among $i_1, k_1, j_1, l_1$ is either $i_1$ or $j_1$. Without loss of generality, we assume that $i_1$ is the farthest one for the vertex $k_2$. We have the following two cases.
    \begin{enumerate}[i]
        \item If the distance $d_T(i_1, i_2)>d_T(i_1, k_2)$, then there exists a path connecting $i_1$ and $i_2$, such that $b(i_1,i_2)=k_1$ and $e(i_1,i_2)=k_2$. Hence $\{x_{i_1,k_1}, x_{i_2,k_2}\} \in \calE(G_T)$.
        \item If $d_T(i_1, i_2)<d_T(i_1, k_2)$, then clearly there exists a path connecting $i_1$ and $j_2$, such that $b(i_1,j_2)=k_1$ and $e(i_1,i_2)=l_2$. Hence $\{x_{i_1,k_1}, x_{j_2,l_2}\} \in \calE(G_T)$. \qedhere
    \end{enumerate}
\end{proof}

\begin{corollary}
    \label{leaf connect with edge}
    Let $i$ be a leaf in $T$, and let $j$ be the unique vertex adjacent to $i$. If $\{x_{k_1,l_1}, x_{k_2,l_2}\}$ is an edge in the generic graph $G_T$, then $\{x_{i,j}, x_{k_1,l_1}\} \in \calE(G_T)$ or $\{x_{i,j}, x_{k_2,l_2}\} \in \calE(G_T)$.
\end{corollary}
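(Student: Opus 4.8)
The plan is to derive this directly from Lemma \ref{there is edge} together with Observation \ref{obs}(a), by manufacturing a genuine edge of $G_T$ that carries the leaf vertex $x_{i,j}$. The point is that $x_{i,j}$ looks like an isolated endpoint in the statement, but it always sits on a distinguished edge of $G_T$ which we can feed into Lemma \ref{there is edge}.

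First I would observe that, since $i$ is a leaf of $T$ with unique neighbor $j$, Observation \ref{obs}(a) (applied with $k=j$) says that $x_{j,i}$ is a leaf of $G_T$ and that $x_{i,j}$ is its unique neighbor. In particular $\{x_{i,j},x_{j,i}\}\in\calE(G_T)$. (One also sees this straight from the definition: the path from $i$ to $j$ is the single edge, so $b(i,j)=j$ and $e(i,j)=i$, which is exactly the incidence condition for $\{x_{i,j},x_{j,i}\}$.) This provides a bona fide edge on which the vertex $x_{i,j}$ lies.

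Next I would apply Lemma \ref{there is edge} to the two edges $\{x_{i,j},x_{j,i}\}$ and $\{x_{k_1,l_1},x_{k_2,l_2}\}$. The lemma guarantees that at least one of the four cross-pairs
\[
\{x_{i,j},x_{k_1,l_1}\},\quad \{x_{i,j},x_{k_2,l_2}\},\quad \{x_{j,i},x_{k_1,l_1}\},\quad \{x_{j,i},x_{k_2,l_2}\}
\]
lies in $\calE(G_T)$. If one of the first two occurs, the desired conclusion is immediate.

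Finally I would absorb the two cross-pairs involving $x_{j,i}$ using the leaf property. Since $x_{j,i}$ is a leaf of $G_T$ whose only neighbor is $x_{i,j}$, an edge $\{x_{j,i},x_{k_1,l_1}\}$ forces $x_{k_1,l_1}=x_{i,j}$, whence the given edge equals $\{x_{i,j},x_{k_2,l_2}\}$ and so $\{x_{i,j},x_{k_2,l_2}\}\in\calE(G_T)$; symmetrically, $\{x_{j,i},x_{k_2,l_2}\}\in\calE(G_T)$ forces $\{x_{i,j},x_{k_1,l_1}\}\in\calE(G_T)$. In every case one of the two required edges belongs to $G_T$. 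I do not anticipate a real obstacle: the whole argument is a short reduction to the already-established Lemma \ref{there is edge}, and the only delicate point is discarding the two ``wrong'' cross-pairs, which is handled cleanly by the fact that $x_{j,i}$ is a leaf with unique neighbor $x_{i,j}$.
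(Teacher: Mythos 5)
Your proposal is correct and follows essentially the same route as the paper: apply Lemma \ref{there is edge} to the pair of edges $\{x_{k_1,l_1}, x_{k_2,l_2}\}$ and $\{x_{i,j}, x_{j,i}\}$, then use Observation \ref{obs} (that $x_{j,i}$ is a leaf of $G_T$ with unique neighbor $x_{i,j}$) to rule out or absorb the two cross-pairs involving $x_{j,i}$. Your explicit handling of those two cases (forcing $x_{k_1,l_1}=x_{i,j}$ or $x_{k_2,l_2}=x_{i,j}$, so the given edge itself is the required one) just spells out what the paper leaves implicit.
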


\begin{proof}
    Since $\{x_{k_1,l_1}, x_{k_2,l_2}\}, \{x_{i,j}, x_{j,i}\} \in \calE(G_T)$, by Lemma \ref{there is edge}, at least one of  $\{x_{i,j}, x_{k_1,l_1}\},\, \{x_{i,j}$, $x_{k_2,l_2}\}$, $\{x_{j,i}$, $x_{k_1,l_1}\}$ and $\{x_{j,i}, x_{k_2,l_2}\}$ is an edge in $G_T$. Note by Observation \ref{obs} that  $x_{j,i}$ is a leaf in $G_T$, and the unique vertex adjacent to $x_{j,i}$ is $x_{i,j}$. Thus, we have $\{x_{i,j}, x_{k_1,l_1}\} \in \calE(G_T)$ or $\{x_{i,j}, x_{k_2,l_2}\} \in \calE(G_T)$.
\end{proof}

\begin{proposition}
    \label{connect and diameter}
    Let $G_T$ be the generic graph of a tree $T$ with at least $3$ vertices. Then $G_T$ is connected and the diameter of $G_T$ is $3$.
\end{proposition}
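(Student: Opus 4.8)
The plan is to prove the two assertions separately: that the diameter is at most $3$ (which simultaneously yields connectedness, since a finite diameter means any two vertices are joined by a path), and that it is at least $3$ (by exhibiting one pair of vertices at distance exactly $3$).

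For the upper bound the starting point is the elementary observation that every vertex $x_{a,b}$ of $G_T$ is adjacent to $x_{b,a}$: the unique path in $T$ from $a$ to $b$ is the single edge $\{a,b\}$, so $b(a,b)=b$ and $e(a,b)=a$, whence $\{x_{a,b},x_{b,a}\}\in\calE(G_T)$. Now fix two arbitrary vertices $x_{i_1,k_1}$ and $x_{i_2,k_2}$. Then $\{x_{i_1,k_1},x_{k_1,i_1}\}$ and $\{x_{i_2,k_2},x_{k_2,i_2}\}$ are two edges of $G_T$, so Lemma \ref{there is edge} applies and guarantees that at least one of the four edges $\{x_{i_1,k_1},x_{i_2,k_2}\}$, $\{x_{i_1,k_1},x_{k_2,i_2}\}$, $\{x_{k_1,i_1},x_{i_2,k_2}\}$, $\{x_{k_1,i_1},x_{k_2,i_2}\}$ lies in $\calE(G_T)$. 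I would then read off a connecting path in each of the four cases: the first case gives distance $1$; the second and third give paths of length $2$ after inserting the pair-edge at $x_{k_2,i_2}$ or at $x_{k_1,i_1}$; and the last gives the path $x_{i_1,k_1}-x_{k_1,i_1}-x_{k_2,i_2}-x_{i_2,k_2}$ of length $3$. Hence every pair of vertices is joined by a path of length at most $3$, proving both connectedness and $\operatorname{diam}(G_T)\le 3$.

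For the lower bound I would exhibit two vertices realizing distance $3$. Since $T$ has at least $3$ vertices it has at least two distinct leaves $i$ and $j$; let $k$ and $l$ be their unique neighbors. By Observation \ref{obs}(a) the vertices $x_{k,i}$ and $x_{l,j}$ are leaves of $G_T$, with unique neighbors $x_{i,k}$ and $x_{j,l}$ respectively, while Observation \ref{obs}(b) gives $\{x_{i,k},x_{j,l}\}\in\calE(G_T)$. This produces the path $x_{k,i}-x_{i,k}-x_{j,l}-x_{l,j}$ of length $3$. To see that $3$ is optimal, I would argue that $x_{k,i}$ and $x_{l,j}$ are distinct (immediate from $i\neq j$), non-adjacent, and share no common neighbor: each is a leaf of $G_T$ with a single neighbor, so any walk of length $\le 2$ between them would force $x_{i,k}=x_{j,l}$, i.e.\ $i=l$ and $k=j$; this would make $\{i,j\}$ an edge with both endpoints leaves, so $T=K_2$, contradicting $|\calV(T)|\ge 3$. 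Thus their distance is exactly $3$.

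Combining the two parts gives connectedness together with $\operatorname{diam}(G_T)=3$. The main obstacle is not any single deep step—the heavy lifting has already been done in Lemma \ref{there is edge} and Observation \ref{obs}—but rather the careful bookkeeping in the lower bound: verifying that the two chosen leaf-vertices are distinct, non-adjacent, and share no common neighbor, so that the exhibited length-$3$ path genuinely cannot be shortened. I expect the degenerate configurations (small trees, or two leaves adjacent to one another) to be exactly the places where the hypothesis $|\calV(T)|\ge 3$ must be invoked explicitly.
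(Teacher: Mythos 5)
Your proposal is correct and follows essentially the same route as the paper: the pair edges $\{x_{a,b},x_{b,a}\}$ together with Lemma \ref{there is edge} give connectedness and diameter at most $3$, and two leaves of $T$ yield, via Observation \ref{obs}, the vertices $x_{k,i}$ and $x_{l,j}$ at distance exactly $3$; you merely spell out the case analysis and the optimality check that the paper leaves as ``clear.'' One small notational slip in the lower bound: adjacency of $x_{k,i}$ and $x_{l,j}$ forces $x_{i,k}=x_{l,j}$ (i.e.\ $i=l$ and $k=j$, giving the $K_2$ contradiction), whereas $x_{i,k}=x_{j,l}$ is what a common neighbor would force (i.e.\ $i=j$, contradicting the distinctness of the leaves) --- both contradictions go through, so the argument stands.
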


\begin{proof}
    Note that for any vertex $x_{i,k}$, there exists at least one edge $\{x_{i,k}, x_{k,i}\}$ adjacent to $x_{i,k}$. So, the connectivity follows from Lemma \ref{there is edge}. Again, by Lemma \ref{there is edge}, the diameter of $G_T$ is at most $3$. Assume that $i, j$ are two leaves, and $\{i, k\}, \{j, l\} \in \calE(T)$.
    Since $T$ has at least $3$ vertices, $k\ne j$ and $i\ne l$.
    It is clear that $\{x_{i,k}, x_{j,l}\} \in \calE(G_T)$, and the distance between $x_{k, i}$ and $x_{l, j}$ in $G_T$ is 3.
\end{proof}

The following is an important property of the generic graph of a tree.

\begin{proposition}
    [{\cite[Proposition 6]{arXiv:1508.07119}}]\label{bi-CM}
    For any tree $T$, the generic graph $G_T$ is bi-CM.
\end{proposition}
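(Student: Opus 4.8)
The plan is to unwind the definition: $G_T$ is bi-CM means that both $\Delta\coloneqq \calI(G_T)$ and its Alexander dual $\Delta^\vee$ are Cohen--Macaulay. By the Eagon--Reiner theorem, $\Delta^\vee$ is CM precisely when the edge ideal $I(G_T)=I_\Delta$ has a linear resolution, and by Fröberg's theorem \cite{MR1171260} this happens exactly when the complement graph $\overline{G_T}$ is chordal. Thus the whole statement reduces to two tasks: (i) show $\overline{G_T}$ is chordal, and (ii) show $\calI(G_T)$ is CM. Throughout write $n=|\calV(T)|$, so $G_T$ has $2(n-1)$ vertices.

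For (i) I would produce a perfect elimination ordering of $\overline{G_T}$ by induction on $n$, peeling off the vertex $x_{i,k}$ attached to a leaf $i$ of $T$ (with $k$ its unique neighbor). The key point is that $x_{i,k}$ is a simplicial vertex of $\overline{G_T}$, i.e. $\calV(G_T)\setminus N_{G_T}[x_{i,k}]$ is independent in $G_T$: every vertex outside $N_{G_T}[x_{i,k}]$ corresponds to a direction pointing \emph{away} from $k$, and two such directions can never point toward each other, so by the path criterion defining $\calE(G_T)$ they are non-adjacent. Deleting $x_{i,k}$ leaves $x_{k,i}$ isolated in $G_T$ (Observation \ref{obs}) sitting beside the induced copy of $G_{T\setminus i}$, so $\overline{G_T}\setminus x_{i,k}$ is a cone over $\overline{G_{T\setminus i}}$, which is chordal by induction. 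Lemma \ref{there is edge} and Lemma \ref{replaced by leaf} are the structural facts keeping this bookkeeping honest.

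For (ii) the first step is purity. The $n-1$ pairs $\{x_{a,b},x_{b,a}\}$ (for $\{a,b\}\in\calE(T)$) form a perfect matching of $G_T$, so an independent set meets each pair at most once and $\calI(G_T)$ has dimension at most $n-2$. Conversely, if a maximal independent set $S$ avoided both $x_{a,b}$ and $x_{b,a}$, then $S$ would contain a vertex blocking each of them; a short case analysis with the path criterion shows that these two blockers point toward each other, hence are adjacent, contradicting independence of $S$. Therefore every facet of $\calI(G_T)$ has exactly $n-1$ vertices and $\calI(G_T)$ is pure. It then remains to show $\calI(G_T)$ is shellable (equivalently vertex decomposable); I would again induct via the leaf, using $x_{i,k}$ as shedding vertex, since the deletion $\calI(G_T\setminus x_{i,k})$ is the cone over $\calI(G_{T\setminus i})$ with apex the now-isolated $x_{k,i}$, which is handled by induction.

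The main obstacle is precisely the link in this last induction. One computes $\link_{\calI(G_T)}(x_{i,k})=\calI(H)$, where $H$ is the induced subgraph of $G_{T\setminus i}$ obtained by deleting all directions pointing toward $k$; this $H$ is \emph{not} the generic graph of a tree, so the induction does not close inside the class of generic graphs. I would resolve this either by strengthening the inductive hypothesis to cover such vertex-deleted generic graphs, or by writing down an explicit shelling order of $\calI(G_T)$ directly. I would also remark that (ii), and hence the whole proposition, is subsumed by the stronger conclusion that $G_T$ is bi-strongly shellable (Theorem \ref{bi-strongly shellable theorem}), since a pure strongly shellable complex is shellable and therefore Cohen--Macaulay; the cited result of Herzog and Rahimi supplies an independent route.
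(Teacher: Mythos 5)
Your overall reduction is sound, and the two substantive pieces you do carry out are correct: the simpliciality of $x_{i,k}$ in $\overline{G_T}$ (two arcs pointing away from the leaf $i$ can never point toward each other, by the median/path criterion), and the purity of $\calI(G_T)$ via the perfect matching $\{x_{a,b},x_{b,a}\}$ together with the blocker argument. However, the ``main obstacle'' you flag at the end is not an obstacle, and your own part (i) dissolves it. The link $\link_{\calI(G_T)}(x_{i,k})$ is the independence complex of the induced subgraph of $G_T$ on $\calV(G_T)\setminus N_{G_T}[x_{i,k}]$, and this vertex set is exactly the set you proved to be independent in $G_T$ when showing that $x_{i,k}$ is simplicial in $\overline{G_T}$. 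So your graph $H$ is edgeless, and the link is a full simplex on its $|\calV(T)|-2$ vertices, which is trivially vertex decomposable; there is no need for $H$ to be the generic graph of a tree, no need to strengthen the inductive hypothesis, and no need for an explicit shelling order. The induction then closes at once: the deletion is the cone with apex $x_{k,i}$ over $\calI(G_{T\setminus i})$ (vertex decomposable by induction, since coning preserves vertex decomposability), the link is a simplex, and $x_{i,k}$ is a shedding vertex because the unique facet of the link has cardinality $|\calV(T)|-2$ while, by your purity argument, every facet of the deletion has cardinality $|\calV(T)|-1$. One terminological slip: shellable and vertex decomposable are not equivalent; you only need pure vertex decomposable $\Rightarrow$ shellable $\Rightarrow$ Cohen--Macaulay, which is what you actually use.

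For comparison, the paper does not prove Proposition \ref{bi-CM} at all: it quotes it from Herzog--Rahimi \cite{arXiv:1508.07119} and then derives it anew as a corollary of the stronger Theorem \ref{bi-strongly shellable theorem}, whose proof goes through Proposition \ref{tree-graph} ($G_T$ is ESS, which settles the Alexander-dual side via Lemma \ref{complement shellable}), the description of the facets of $\calI(G_T)$ as out-tree orientation assignments (Proposition \ref{facet of independence complex of G_T}), and the distance computation of Lemma \ref{lem:Assign-Tree} combined with Lemma \ref{d=d-1+1}. Your route is genuinely different and more elementary: Eagon--Reiner plus Fr\"oberg \cite{MR1171260} for the dual side (equivalently, one could invoke Proposition \ref{tree-graph} with Theorem \ref{ssg and chordal graph} here), and a leaf-induction for vertex decomposability of $\calI(G_T)$; it yields only Cohen--Macaulayness, not strong shellability, but once the spurious obstacle is removed it is a complete and self-contained proof of the proposition.
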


We will generalize it and consider bi-strong-shellability; see Theorem \ref{bi-strongly shellable theorem}.


\begin{proposition}
    \label{tree-graph}
    For any tree $T$,  the generic graph $G_T$ is ESS.
\end{proposition}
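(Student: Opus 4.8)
The plan is to induct on the number of vertices $n$ of $T$, the case $n=2$ being immediate since $G_T$ is then a single edge. For the inductive step I would choose a leaf $i$ of $T$ with unique neighbor $j$ and set $T'\coloneqq T\setminus i$, which is again a tree, now on $n-1$ vertices. The first point to establish is that $G_{T'}$ is precisely the induced subgraph of $G_T$ obtained by deleting the two vertices $x_{i,j}$ and $x_{j,i}$: since $i$ is a leaf it lies on no path joining two other vertices, so the begin- and end-data $b(\cdot,\cdot)$, $e(\cdot,\cdot)$ computed in $T'$ agree with those in $T$, and adjacency among the surviving vertices is unchanged. By the induction hypothesis $G_{T'}$ is ESS, and I would fix a strong shelling order $\succ'$ on $\calE(G_{T'})$.

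Next I would isolate the edges of $G_T$ that are not edges of $G_{T'}$, calling them \emph{new}. By Observation \ref{obs}(a) the vertex $x_{j,i}$ is a leaf of $G_T$ whose only neighbor is $x_{i,j}$, so every new edge contains $x_{i,j}$; in particular the new edges pairwise share $x_{i,j}$ and are therefore pairwise adjacent. The order $\succ$ on $\calE(G_T)$ I propose is to declare every new edge to be $\succ$ every old edge, to order the old edges by $\succ'$, and to order the new edges among themselves arbitrarily.

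To verify that $\succ$ is a strong shelling order I would invoke Remark \ref{ESS-graph}, so that it suffices to connect any two disjoint edges $E\succ E'$ by a third edge that is $\succ E'$ and meets both. If $E$ and $E'$ are both old, the connector supplied by $\succ'$ is old, hence still $\succ E'$, and the adjacencies persist because $G_{T'}$ is induced in $G_T$. If $E$ and $E'$ are both new they share $x_{i,j}$ and so are not disjoint; and no old edge can exceed a new one, which rules out $E'$ new with $E\succ E'$. The only remaining case is $E$ new, $E'$ old: here $E$ contains $x_{i,j}$, and writing $E'=\{x_{k_1,l_1},x_{k_2,l_2}\}$, Corollary \ref{leaf connect with edge} guarantees that $x_{i,j}$ is adjacent to $x_{k_1,l_1}$ or to $x_{k_2,l_2}$. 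The corresponding edge $\{x_{i,j},x_{k_s,l_s}\}$ is a new edge meeting $E$ in $x_{i,j}$ and meeting $E'$ in $x_{k_s,l_s}$, and being new it is $\succ E'$, as required.

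The step that really needs care — and the crux of the argument — is the choice of \emph{direction} of the order. The mere existence of a connecting edge for any two disjoint edges is already guaranteed in full generality by Lemma \ref{there is edge}, so the entire difficulty lies in arranging that the connector outranks the smaller edge $E'$. Placing the leaf-born edges through $x_{i,j}$ at the $\succ$-\emph{top} is exactly what forces every connector produced by Corollary \ref{leaf connect with edge} to dominate the old edge $E'$ automatically; had they instead been placed at the bottom, the connector $\{x_{i,j},x_{k_s,l_s}\}$ could fall below $E'$ and the verification would collapse. Once this orientation is fixed, the remaining verifications are the routine case analysis indicated above.
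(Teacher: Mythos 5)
Your proposal is correct and follows essentially the same route as the paper's proof: induct by removing a leaf $i$ of $T$, note that the edges of $G_T$ outside $G_{T'}$ (the paper's ``cone'' and ``handle'' parts) all pass through $x_{i,j}$, place these new edges at the top of the order extending the inductive shelling order on $\calE(G_{T'})$, and settle the only nontrivial case (new edge vs.\ old edge) by Corollary \ref{leaf connect with edge}. Your additional verifications (that $G_{T'}$ is the induced subgraph of $G_T$ on $\calV(G_T)\setminus\{x_{i,j},x_{j,i}\}$, and the explicit case analysis) are details the paper leaves implicit, not a different argument.
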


\begin{proof}
    We prove by induction on $n=|\calV(T)|$. If $n$ is $2$, the situation is clear. Hence we may assume that $n\ge 3$ and for all trees $T'$ with $|\calV(T')|\le n-1$, this proposition holds for $T'$.

    Without loss of generality, we may assume that $\calV(T)=[n]$ and $n$ is a leaf of $T$ which is adjacent to the vertex ${n-1}$ in $T$. Let $T'$ be the tree by deleting $n$ from $T$. Thus the edge set $\calE(G_{T'})$ of the generic graph $G_{T'}$ has a strong shelling order $\succ_{T'}$. We will extend $\succ_{T'}$ to give a strong shelling order on $\calE(G_T)$.
    Note that $G_{T}$ can be built from $G_{T'}$ by attaching the following edges:
    \begin{enumerate}[i]
        \item The ``cone'' part: the edges $\{x_{n,n-1},x_{i,j}\}$ for all $i\in [n-2]$ such that $j=b(i,n)$ for a path from $i$ to $n$ in $T$.
        \item The ``handle'' part: the edge $\{x_{n,n-1},x_{n-1,n}\}$.
    \end{enumerate}

    Let $\succ_{T}$ be an arbitrary total order on $\calE(G_T)$ satisfying:
    \begin{enumerate}[1]
        \item For two edges $e_1, e_2 \in \calE(G_{T'})$, $e_1 \succ_{T} e_2$ if and only if $e_1 \succ_{T'} e_2$;
        \item If $e_1$ is in the ``cone'' part or in the ``handle'' part, and $e_2 \in \calE(G_{T'})$, then $e_1 \succ_{T} e_2$.
    \end{enumerate}

    The existence of such an ordering is without question.
    We claim that $\succ_{T}$ is a strong shelling order on $\calE(G_T)$. In fact, the only case we need to check is when $e_1$ is in the ``cone'' part or in the ``handle'' part, and $e_2 \in \calE(G_{T'})$. Assume that $e_1= \{x_{n,n-1}, x_{i,j}\}$ and $e_2= \{x_{k_1,l_1}, x_{k_2,l_2}\}$. Note that $n$ is a leaf in the graph $T$, by Corollary \ref{leaf connect with edge}. Thus $\{x_{n,n-1}, x_{k_1,l_1}\} \in \calE(G_T)$ or $\{x_{n,n-1} x_{k_2,l_2}\} \in \calE(G_T)$. Note that $\{x_{n,n-1}, x_{k_1,l_1}\} \succ_{T} e_2$ and $\{x_{n,n-1}, x_{k_2,l_2}\} \succ_{T} e_2$ by the construction of $\succ_{T}$. This complete the proof.
\end{proof}

\begin{example}
    To illustrate the previous proof, let's look back at the aforementioned two figures in Example \ref{example-2-trees}.
    In the Figure \ref{trees}, the edge of the leaf that we are considering are labeled with $\clubsuit$. In the Figure \ref{GT}, the ``cone'' part edges are labeled with $\heartsuit$ while the ``handle'' part are labeled with $\spadesuit$.
\end{example}

Recall that if $I$ is a squarefree monomial ideal in $S=\KK[x_1,\dots,x_n]$ generated by squarefree monomials $u_i$, $1\le i\le m$, then the \Index{Alexander dual} of $I$, denoted by $I^\vee$, is defined to be the squarefree monomial ideal
\begin{equation}
    \label{dual}
    I^\vee\coloneqq \bigcap_{i=1}^m \braket{x_j\mid x_j \text{ divides } u_i}. \tag{$\star$}
\end{equation}
It is well-known that $(I^\vee)^\vee=I$. Furthermore, if $I$ is minimally generated by $u_i$, $1\le i \le m$, then above equation \eqref{dual} actually gives a minimal irredudant primary decomposition of $I^{\vee}$.
Finally, we are ready to connect the linear resolution property with the strong shellability in Theorem \ref{ssg and chordal graph}.

\begin{proof}
    [Proof of $\ref{SSGC-3}\Rightarrow \ref{SSGC-1}$ in Theorem \ref{ssg and chordal graph}]
    Without loss of generality, we may assume that $\calV(G)=\Set{x_1,\dots,x_n}$.  Suppose that the quadratice squarefree monomial ideal $I(G)\subset S=\KK[x_1,\dots,x_n]$ has a linear resolution and let $J=I(G)^\vee$ be its Alexander dual ideal. Then, by the well-known Eagon-Reiner theorem \cite[Theorem 3]{MR1633767}, $J$ is Cohen-Macaulay with codimension $2$.
    Suppose that $\Set{u_1,\dots,u_{m+1}}$ is the minimal monomial generating set of $J$.
    Applying the graded Nakayama Lemma to the Taylor resolution of $J$, we have a minimal graded free resolution of of $S/J$ in the form
    \[
    0\to S^m \stackrel{A}{\to} S^{m+1} \to S\to S/J \to 0,
    \]
    since it has to satisfy the Hilbert-Burch theorem \cite[Theorem 1.4.17]{MR1251956}.  Here, $A$ is called a \emph{Hilbert-Burch matrix} of $J$. Each row of $A$ has exactly two nonzero entries, say, on the $i$-th and $j$-th complements with $i<j$, corresponding to a Taylor relation of $u_i$ and $u_j$:
    \[
    r_k=-u_{i,j}\varepsilon_i+u_{j,i}\varepsilon_j,
    \]
    where $u_{i,j}=u_i/\gcd(u_i,u_j)$ and $u_{j,i}=u_j/\gcd(u_i,u_j)$.
    Now we have a finite simple graph $T=T_A$ on $[m+1]$ such that the row of $A$ in above form contributes an edge $\{i,j\}$ to $T$. Since $T$ has $m+1$ vertices, $m$-edges and no isolated vertex, $T$ is indeed a tree. For more details of these preparations, see the discussions in \cite[Remark 6.3]{MR1341789} and \cite{MR2457194}.

    Actually, the matrix $A$ can be obtained from the generic matrix $A(T)$ by the substitution:
    \[
    x_{i,j}\mapsto u_{i,j}.
    \]
    Therefore,
    \[
    J=\bigcap_{i<j} (u_{i,b(i,j)},u_{j,e(i,j)})
    \]
    by \cite[Proposition 1.4]{MR2457194}.

    For each squarefree monomial $u_{i,j}$, we may denote $\deg(u_{i,j})$ by $d(i,j)$ and assume that
    \[
    u_{i,j}=x_{i,j;1}x_{i,j;2}\dots x_{i,j;{d(i,j)}},
    \]
    a product of distinct variables in $S$. We consider the matrix $A_p$ obtained from generic matrix $A(T)$ by the substitution:
    \[
    x_{i,j} \mapsto x_{i,j}^{(1)}x_{i,j}^{(2)}\dots x_{i,j}^{(d(i,j))},
    \]
    a product of distinct new variables.  The maximal minor ideal of $A_p$ is
    \begin{align*}
        J_p= & \bigcap_{i<j} (x_{i,b(i,j)}^{(1)}\dots x_{i,b(i,j)}^{(d(i,b(i,j)))},x_{j,e(i,j)}^{(1)}\dots x_{j,e(i,j)}^{(d(j,e(i,j)))})\\
        = & \bigcap_{i<j} \bigcap_{1\le k_1 \le d(i,b(i,j))} \bigcap_{1\le k_2 \le d(j,e(i,j))} (x_{i,b(i,j)}^{k_1},x_{j,e(i,j)}^{k_2}).
    \end{align*}
    Let $G_p$ be the finite simple graph whose edge ideal $I(G_p)=J_p^\vee$. Obviously $G_p$ is a blow-up graph of the generic graph $G_T$; the vertex $x_{i,j}$ is replaced by the vertices $x_{i,j}^{(k)}$, $1\le k\le d(i,j)$. As $G_T$ is ESS by Proposition \ref{tree-graph}, so is $G_p$ by Lemma \ref{blow-up}.

    On the other hand, $J$ can be obtained from $J_p$ by the substitution:
    \[
    x_{i,j}^{(l)} \mapsto x_{i,j;l}.
    \]
    Notice that $J$ is squarefree with codimension $2$. This implies that $G$ is a proper quotient graph of the graph $G_p$:
    \[
    \calE(G)=\Set{\{x_{i,b(i,j);k_1},x_{j,e(i,j);k_2}\}\mid 1\le i<j\le m+1, 1\le k_1\le d(i,b(i,j)), 1\le k_2\le d(j,e(i,j))}.
    \]
    As $G_p$ is ESS, so is $G$ by Lemma \ref{quotient-graph}.
\end{proof}

The rest of this subsection is devoted to the bi-SS property of the generic graph $G_T$ of a tree $T$.
Note that we can assign directions to every edge of $T$ and end up with an \Index{oriented tree} $D_T$. A directed edge from $v$ to $u$ will be called an \Index{arc} and can be represented by $\overrightarrow{vu}$.
The set of arcs will be represented by $\calA(D_T)$.
With respect to $D_T$, the \Index{in-neighborhood} of $v\in \calV(T)$ is
\[
N_{D_T}^-(v)\coloneqq \Set{u\in N_T(v)\mid \overrightarrow{uv}\in D}.
\]
One can similarly define the \Index{out-neighborhood} $N_{D_T}^+(v)$ of $v$ in $D_T$.  A vertex of $D_T$ is called a \Index{source} if it does not have in-neighbors. The oriented tree $D_T$ is called an \Index{out-tree} if it has exactly one source, which will be called the \Index{root} of $D_T$. An out-tree $D_T$ with root $v$ will be denoted by $D_{T,v}^+$. Obviously, given a vertex $v\in T$, one can build up a unique oriented tree $D_T$ with $D_T=D_{T,v}^+$: for every edge $\{u_1,u_2\}\in \calE(T)$, $\overrightarrow{u_1u_2}\in \calA(D_T)$ if and only if $\dis_T(v,u_1)<\dis_T(v,u_2)$.

Given an oriented tree $D_T$, the subset
\[
A_{D_T}\coloneqq \Set{x_{u,v}\mid \overrightarrow{uv}\in\calA(D_T)} \subset \calV(G_T)
\]
will be called an \Index{orientation assignment} (associated to $D_T$).
Obviously, $|A_{D_T}|=|\calV(T)|-1$  and $|A_{D_T}\cap \{x_{u,v},x_{v,u}\}|=1$ for every edge $\{u,v\}\in T$.
Conversely, given a subset $A\subset G_T$ such that $|A|=|\calV(T)|-1$  and $|A\cap \{x_{u,v},x_{v,u}\}|=1$ for every edge $\{u,v\}\in T$, one can recover easily the oriented tree $D_T$ such that $A=A_{D_T}$. If $D_T$ is an out-tree, we will also call $A_{D_T}$ an \Index{out-tree orientation assignment}.

\begin{proposition}
    \label{facet of independence complex of G_T}
    Let $\calI(G_T)$ be the independence complex of $G_T$, where $G_T$ is the generic graph of a tree $T$. Then the facet set
    \[
    \calF(\calI(G_T))= \Set{A \subset G_T \mid \text{$A$ is an out-tree orientation assignment}}.
    \]
    In particular, $\calI(G_T)$ is pure of dimension $|\calV(T)|-2$.
\end{proposition}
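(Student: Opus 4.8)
The plan is to identify the independent sets of $G_T$ with partial orientations of the tree $T$ and then single out the maximal ones as exactly the out-tree orientation assignments. First I would record the basic observation that for every edge $\{u,v\}\in\calE(T)$ the pair $\{x_{u,v},x_{v,u}\}$ is itself an edge of $G_T$: the path from $u$ to $v$ is just $u,v$, so $b(u,v)=v$ and $e(u,v)=u$. Consequently any independent set $A$ meets each pair $\{x_{u,v},x_{v,u}\}$ in at most one vertex, so $A$ determines a partial orientation of the edges of $T$ (orient $\{u,v\}$ as $\overrightarrow{uv}$ whenever $x_{u,v}\in A$), and in particular $|A|\le|\calV(T)|-1$. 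Unwinding the adjacency rule of $G_T$, two chosen arcs $\overrightarrow{uv}$ and $\overrightarrow{xy}$ coming from distinct edges of $T$ are adjacent in $G_T$ precisely when $v=b(u,x)$ and $y=b(x,u)$, i.e. when the two arcs point toward one another along the path joining the two edges. Hence $A$ is independent if and only if no two of its arcs point toward each other.

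Next I would show that every out-tree orientation assignment is a facet. For a root $r$, the out-tree $D_{T,r}^{+}$ orients each edge away from $r$; writing $C_u$ for the component containing $u$ after deleting the edge $\{u,v\}$ from $T$, the statement ``$\overrightarrow{uv}$ points away from $r$'' is exactly $r\in C_u$. If two arcs pointed toward each other, a short check (the two edges lie at opposite ends of the connecting path) shows their tail-components $C_u$ and $C_x$ would be disjoint, which is impossible once both contain $r$. Thus $A_{D_{T,r}^{+}}$ contains no edge of $G_T$, so it is independent; since it has the maximal possible cardinality $|\calV(T)|-1$, it is a facet.

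The main work is the converse, for which I would show that every independent set $A$ extends to an out-tree orientation assignment. I claim the tail-components $\{C_u:\overrightarrow{uv}\in A\}$ pairwise intersect. If $\overrightarrow{uv}$ and $\overrightarrow{xy}$ are two arcs of $A$, then by independence they do not face each other, so $v\ne b(u,x)$ or $y\ne b(x,u)$; in the first case $v$ lies off the $u$--$x$ path, so deleting $\{u,v\}$ leaves $u$ connected to $x$ and hence $x\in C_u\cap C_x$, and the second case is symmetric. Since the $C_u$ are subtrees of the tree $T$ and pairwise meet, the Helly property for subtrees of a tree yields a common vertex $r\in\bigcap_u C_u$. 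Orienting every not-yet-oriented edge of $T$ away from $r$ then produces an out-tree orientation assignment $A_{D_{T,r}^{+}}\supseteq A$. As this extension is itself independent by the previous paragraph and $A$ is a facet, the two coincide, so $A$ is an out-tree orientation assignment. Combining the two directions identifies $\calF(\calI(G_T))$ with the out-tree orientation assignments; since each has exactly $|\calV(T)|-1$ vertices, $\calI(G_T)$ is pure of dimension $|\calV(T)|-2$.

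I expect the extension step to be the crux: translating ``independent'' into the geometric statement that the tail-components pairwise intersect, and then invoking the Helly property of subtrees, is where the argument genuinely uses the tree structure. The remaining bookkeeping (the pair edges, the adjacency-to-``facing'' translation, and the maximality argument) is routine by comparison.
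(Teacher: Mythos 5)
Your proof is correct, and its main step takes a genuinely different route from the paper's. The two arguments agree on the preliminaries: both use the pair edges $\{x_{u,v},x_{v,u}\}$ to bound independent sets by $|\calV(T)|-1$, and both verify that out-tree orientation assignments are independent of maximum size (the paper via distances from the root and its ``farthest endpoint'' observation from Lemma \ref{there is edge}; you via the equivalent reformulation that all tail-components contain the root, so no two arcs can face each other). The divergence is in the converse. The paper first proves purity by a greedy one-vertex-at-a-time extension, using the neighborhood containment $N(x_{i,j})\subseteq N(x_{l,k})$ to show every maximal independent set has exactly $|\calV(T)|-1$ vertices; it then only needs to handle full-size independent sets, which are automatically orientation assignments, and it finishes by citing the existence of a source in the oriented tree and deriving an adjacency contradiction if the orientation fails to be an out-tree. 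You instead prove the stronger statement that \emph{every} independent set is contained in an out-tree orientation assignment: independence translates into ``no two arcs face each other,'' facing arcs are exactly those with disjoint tail-components, so the tail-components of an independent set pairwise intersect, and the Helly property for subtrees of a tree produces a common root $r$ with $A\subseteq A_{D_{T,r}^+}$. This single extension argument subsumes both the paper's purity step and its source argument, and is arguably more conceptual; its cost is reliance on the Helly property as an external fact, whereas the paper's external input is only the existence of a source in an oriented tree. Both proofs are complete; yours packages the tree structure into one lemma where the paper spreads it over two separate arguments.
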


\begin{proof}
    We may assume that $|\calV(T)|=n$. Since $|\calV(G_T)|=2(n-1)$, for every subset $B\subset \calV(G_T)$ with $|B|\ge n$, one can find suitable $x_{i,j},x_{j,i}\in B$ by the pigeonhole principle. But these two vertices are adjacent in $G_T$, meaning that $B$ is dependent. Hence the cardinality of any independent set of $G_T$ is at most $n-1$.

    On the other hand, if $B$ is an independent set containing less than $n-1$ vertices, we claim that there exists some vertex $x_{i,j} \in \calV(G_T)$, such that $B \cup \{x_{i,j}\}$ is an independent set. Hence every maximal independent set contains exactly $n-1$ vertices.
    In fact, since $|B| < n-1$, there exists some $x_{i,j} \in \calV(G_T)$, such that $x_{i,j} \notin B$ and $x_{j,i} \notin B$. If $x_{j,i}$ is not adjacent to any vertex in $B$, then clearly $B \cup \{x_{j,i}\}$ is an independent set. Otherwise, assume that there exists some $x_{l,k} \in B$, such that $\{x_{j,i}, x_{l,k}\} \in \calE(G_T)$. It is easy to see that neighborhoods satisfy $N(x_{i,j}) \subseteq N(x_{l,k})$. Since $x_{l,k}$ is a vertex in the independent set $B$,
    $N(x_{l,k})\cap B=\varnothing$. Therefore $N(x_{i,j})\cap B=\varnothing$, i.e., $B \cup \{x_{i,j}\}$ is also an independent set.

    To establish the description of the facet set, we first show that every out-tree orientation assignment of $G_T$ is a facet of $\calI(G_T)$. Let $A=A_{D_T}$ be such an assignment, and let $v$ be the root of $D_T$. Then for every distinct vertices $x_{u_1,v_1}$ and $x_{u_2,v_2}$ in $A$, $\dis_T(v,u_1)<\dis_T(v,v_1)$ and $\dis_T(v,u_2)<\dis_T(v,v_2)$. We claim that $x_{u_1,v_1}$ and $x_{u_2,v_2}$ are not adjacent in $G_T$. In fact, if $\{x_{u_1,v_1}, x_{u_2,v_2}\} \in \calE(G_T)$, then for the root $v$, the farthest vertex among $u_1, v_1, u_2, v_2$ is either $u_1$ or $u_2$, a contradiction.  On the other hand, for any vertex $x_{i,j} \in \calV(G_T) \setminus A$, $x_{j,i} \in A$ and these two vertices are adjacent in $G_T$. Therefore, $A$ cannot be properly expanded to a bigger independent set.

    Finally, we will show that every independent set $A$ with $n-1$ vertices in $G_T$ is an out-tree orientation assignment.
    Note that any such set $A$ is an orientation assignment. Say $A=A_{D_T}$. It follows from \cite[Proposition 2.1.1]{MR2472389}
    that $D_T$ contains at least one source.  Let $v$ be one such vertex. If $D_T\ne D_{T,v}^+$, then there exists $x_{u,u'} \in A$, such that $\dis_T(v, u) > \dis_T(v, u')$.
    Let $w=e(u,v)$. As $v$ is a source,
    $x_{v,w} \in A$ and  $\{x_{v,w}, x_{u,u'}\} \in \calE(G_T)$. This contradicts to the assumption that $A$ is an independent set. Hence $A=A_{D_{T,v}^+}$ is an out-tree orientation assignment. This completes the proof.
\end{proof}

\begin{lemma}
    Let $A_{D_{T,u}^+}$ and $A_{D_{T,v}^+}$ be two distinct out-tree orientation assignments. Then $\dis_{\calI(G_T)}(A_{D_{T,u}^+}, A_{D_{T,v}^+})=\dis_T(u,v)$.
    \label{lem:Assign-Tree}
\end{lemma}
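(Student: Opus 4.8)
The plan is to compute the distance straight from its definition. By Proposition \ref{facet of independence complex of G_T}, the complex $\calI(G_T)$ is pure and its facets are exactly the out-tree orientation assignments; in particular $A_{D_{T,u}^+}$ and $A_{D_{T,v}^+}$ are facets of the common cardinality $n-1$, where $n=|\calV(T)|$. Hence $\dis_{\calI(G_T)}(A_{D_{T,u}^+},A_{D_{T,v}^+})=|A_{D_{T,u}^+}\setminus A_{D_{T,v}^+}|$, and the whole task reduces to counting which vertices $x_{a,b}$ lie in the first assignment but not in the second.

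First I would record the orientation rule one edge at a time. For each edge $\{a,b\}\in\calE(T)$, the defining condition of the out-tree $D_{T,w}^+$ puts $x_{a,b}$ into $A_{D_{T,w}^+}$ exactly when $\dis_T(w,a)<\dis_T(w,b)$; thus, for every root $w$, precisely one of the two vertices $x_{a,b},x_{b,a}$ appears in the assignment. Deleting the edge $\{a,b\}$ splits $T$ into two subtrees, which I will call $T_a$ (containing $a$) and $T_b$ (containing $b$). Since $T$ is a tree, any path from a vertex $w$ to the endpoint lying across $\{a,b\}$ must traverse that edge, so $\dis_T(w,a)<\dis_T(w,b)$ if and only if $w\in T_a$. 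Consequently, the orientation of $\{a,b\}$ in $D_{T,w}^+$ depends only on the component of $T\setminus\{a,b\}$ containing $w$.

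Next I would compare the two roots. By the previous observation, the edge $\{a,b\}$ contributes a vertex to $A_{D_{T,u}^+}\setminus A_{D_{T,v}^+}$ exactly when $u$ and $v$ sit in different components of $T\setminus\{a,b\}$; and in a tree the deletion of $\{a,b\}$ separates $u$ from $v$ if and only if $\{a,b\}$ is one of the edges on the unique path from $u$ to $v$. Therefore the edges whose orientation differs between the two assignments are precisely the $\dis_T(u,v)$ edges along that path, and each such edge contributes exactly one element to $A_{D_{T,u}^+}\setminus A_{D_{T,v}^+}$ (the endpoint label chosen by $u$, which differs from the one chosen by $v$).

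Assembling these steps yields $|A_{D_{T,u}^+}\setminus A_{D_{T,v}^+}|=\dis_T(u,v)$, which is the asserted identity. I do not expect a genuine obstacle here; the only point demanding care is the clean reduction of the orientation of a single edge to the side of that edge on which the root lies, and this is exactly where the tree structure—unique $u$--$v$ paths together with the fact that removing one edge yields exactly two components—is used in an essential way.
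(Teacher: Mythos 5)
Your proof is correct, but it follows a genuinely different route from the paper's. The paper splits the equality into two inequalities: for adjacent $u,v$ the two assignments differ only by exchanging $x_{u,v}$ with $x_{v,u}$, so stepping along the $u$--$v$ path and invoking the triangle inequality for $\dis_{\calI(G_T)}$ gives the upper bound $\dis_{\calI(G_T)}(A_{D_{T,u}^+},A_{D_{T,v}^+})\le \dis_T(u,v)$; for the lower bound it exhibits the $t=\dis_T(u,v)$ explicit elements $x_{i_0,i_1},\dots,x_{i_{t-1},i_t}$ coming from the path, all of which lie in $A_{D_{T,u}^+}\setminus A_{D_{T,v}^+}$. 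You instead compute the difference set exactly: the orientation a root $w$ imposes on an edge $\{a,b\}$ depends only on which component of $T$ minus that edge contains $w$, so an edge is oriented differently by the two roots precisely when it separates $u$ from $v$, i.e., lies on the unique $u$--$v$ path. Your argument is self-contained (no appeal to the triangle inequality) and proves slightly more, namely that $A_{D_{T,u}^+}\setminus A_{D_{T,v}^+}$ consists exactly of the path edges oriented away from $u$, whereas the paper's version is shorter because the upper bound comes for free from the metric structure already set up in Section 2. Both arguments use Proposition \ref{facet of independence complex of G_T} only to know that the assignments are facets of the pure complex $\calI(G_T)$, so that $\dis_{\calI(G_T)}$ is indeed the cardinality of the set difference.
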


\begin{proof}
    When $\{u,v\}$ is an edge of $T$, it is clear that $A_{D_{T,u}^+}$ and $A_{D_{T,v}^+}$ exchange $x_{u,v}$ with $x_{v,u}$. Hence $\dis(A_{D_{T,u}^+}, A_{D_{T,v}^+})=1$. As the distance function on $\calF(\calI(G_T))$ satisfies the triangle inequality, for general $u,v\in \calV(T)$, one has $\dis_{\calI(G_T)}(A_{D_{T,u}^+}, A_{D_{T,v}^+})\le \dis_T(u,v)$.

    On the other hand, suppose that $\dis_T(u,v)=t\ge 2$. Say $u=x_{i_0},x_{i_1},\dots,x_{i_t}=v$ is the unique path from $u$ to $v$ in $T$. Then
    \[
    \{x_{i_0,i_1},x_{i_1,i_2},\dots,x_{i_{t-1},i_t}\} \subseteq A_{D_{T,u}^+}\setminus A_{D_{T,v}^+}.
    \]
    Hence,
    \[
    \dis(A_{D_{T,u}^+}, A_{D_{T,v}^+})=|A_{D_{T,u}^+}\setminus A_{D_{T,v}^+}|\ge t=\dis_T(u,v). \qedhere
    \]
\end{proof}

\begin{remark}
    Let $G$ be a finite simple graph. One can check directly that the Stanley-Reisner complex $\Delta_G$ of the edge ideal $I(G)$ is actually the independence complex $\calI(G)$. Its Alexander dual $\calI(G)^\vee$ is $\braket{E^c \mid E\in\calE(G)}$, where the complement is taken with respect to $\calV(G)$. Notice that $\calI(G)^\vee$ is strongly shellable if and only if $G$ is ESS by Lemma \ref{complement shellable}.
\end{remark}

\begin{theorem}
    \label{bi-strongly shellable theorem}
    For any tree $T$,  the generic graph $G_T$ is bi-SS.
\end{theorem}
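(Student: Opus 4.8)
The plan is to split the bi-SS property into its two constituent requirements and dispatch them separately. By the remark immediately preceding the theorem, the Alexander dual $\calI(G_T)^\vee$ is strongly shellable if and only if $G_T$ is ESS; since Proposition \ref{tree-graph} already guarantees that $G_T$ is ESS, the dual half costs nothing. All the genuine work therefore lies in showing that the independence complex $\calI(G_T)$ itself is strongly shellable.

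For that, I would first translate the problem entirely into the combinatorics of the tree $T$. By Proposition \ref{facet of independence complex of G_T}, the facets of $\calI(G_T)$ are precisely the out-tree orientation assignments $A_{D_{T,v}^+}$, one for each $v\in\calV(T)$, so $\calF(\calI(G_T))$ is in bijection with $\calV(T)$; moreover $\calI(G_T)$ is pure, so Lemma \ref{d=d-1+1} applies. By Lemma \ref{lem:Assign-Tree} this bijection is an isometry, $\dis_{\calI(G_T)}(A_{D_{T,u}^+}, A_{D_{T,v}^+})=\dis_T(u,v)$. Consequently, finding a strong shelling order of $\calI(G_T)$ reduces, via Lemma \ref{d=d-1+1}, to finding a linear order $\succ$ on $\calV(T)$ such that whenever $u\succ v$ there is a vertex $w\succ v$ with $\dis_T(w,v)=1$ and $\dis_T(u,w)=\dis_T(u,v)-1$; that is, the first step on the unique path from $v$ toward any earlier vertex $u$ must itself land on an earlier vertex.

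The key step is to produce such an order, and here I would use a connected ordering: root $T$ at an arbitrary vertex and list the vertices $v_1,v_2,\dots,v_n$ in any linear extension of the rooted-tree order, so that every non-root vertex appears after its parent. Then each prefix $\{v_1,\dots,v_{b-1}\}$ induces a subtree, and $v_b$ has exactly one neighbor $p(v_b)$ in this prefix, namely its parent (its children all appear later). Removing the edge $\{v_b,p(v_b)\}$ separates $v_b$ from the entire prefix, so for every earlier vertex $v_a$ the unique path from $v_b$ to $v_a$ begins with the step $v_b\to p(v_b)$. Taking $w=p(v_b)$, which is earlier than $v_b$ and a tree-neighbor lying one step closer to $v_a$, verifies the condition of Lemma \ref{d=d-1+1} simultaneously for all earlier $v_a$. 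This shows that $\calI(G_T)$ is strongly shellable and, together with the dual half, completes the proof.

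I expect the main obstacle to be essentially bookkeeping: correctly matching the distance reformulation of Lemma \ref{d=d-1+1} with the isometry of Lemma \ref{lem:Assign-Tree}, and confirming that a parent-before-child listing really does force the unique prefix-neighbor to lie on every inward path. Once those two facts are in hand the verification is immediate, so the difficulty is in the setup rather than in any estimate.
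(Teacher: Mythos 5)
Your proof is correct and follows essentially the same route as the paper's: both reduce the dual half to Proposition \ref{tree-graph} via the remark on $\calI(G)^\vee$, then use Propositions \ref{facet of independence complex of G_T}, Lemma \ref{lem:Assign-Tree} and Lemma \ref{d=d-1+1} to turn the strong shellability of $\calI(G_T)$ into finding a connected (parent-before-child) ordering of $\calV(T)$. Your rooted linear-extension construction is just an explicit realization of the connected order whose existence the paper invokes, so the two arguments coincide.
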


\begin{proof}
    By Proposition \ref{tree-graph}, we have already seen that $G_T$ is ESS. Therefore, it remains to show that the independence complex of $G_T$ is strongly shellable. By Proposition \ref{facet of independence complex of G_T}, it suffices to investigate the out-tree orientation assignments of $G_T$.

    There exists a total order on $\calV(T)$: $u_1, \dots, u_n$, such that the induced subgraph $T_k$ of $T$ on $\{u_1, \dots, u_k\}$ is connected for each $k \in [n]$. We will simply write $A_{D_{T,u_i}^+}$ as $A_{u_i}$.  It remains to prove that $A_{u_1}, \dots, A_{u_n}$ is a strong shelling order on $\calF(\calI(G_T))$. In fact, for each pair $i<j$, since $T_j$ is connected, there exists a $k<j$, such that $\{u_k, u_j\} \in \calE(T)$ and $\dis_T(u_i, u_j)=\dis_T(u_i, u_k)+1$. By applying Lemma \ref{lem:Assign-Tree} and Lemma \ref{d=d-1+1}, one can complete the proof.
\end{proof}

As for pure simplicial complexes, strong shellability implies Cohen-Macaulayness, we see immediately that Theorem \ref{bi-strongly shellable theorem} generalizes Proposition \ref{bi-CM}.

\begin{remark}
    We considered the codimension one graph $\Gamma(\Delta)$ of a pure simplicial complex $\Delta$ in \cite{SSC}.
    This is a finite simple graph whose vertex set is $\mathcal{F}(\Delta)$, and two vertices $F, \, G$ are adjacent in $\Gamma(\Delta)$ if and only if $|F\setminus G|=1$.

    Now given the generic graph $G_T$ of some tree $T$, we look that $\Gamma(\calI(G_T))$. It follows immediately from Lemma \ref{lem:Assign-Tree} that $\Gamma(\calI(G_T))$ is isomorphic to $T$.  This fact means that from the generic graph $G_T$ of some tree $T$, we can indeed recover $T$ up to isomorphism.

    Also from this point of view, the strong shellability of $\calI(G_T)$ in Theorem \ref{bi-strongly shellable theorem} can be regarded as a special case of \cite[Theorem 4.7]{SSC}, where we gave a characterization of strongly shellable pure complexes in terms of their codimension one graphs.
\end{remark}

\section{ESS bipartite graphs}
Recall that a \Index{Ferrers graph} is a bipartite graph $G$ on two disjoint vertex set $X=\{x_1,\dots,x_m\}$ and $Y=\{y_1,\dots,y_n\}$ such that if $\{x_i,y_j\}\in \calE(G)$, then so is $\{x_r,y_s\}$ for $1\le r \le i$ and $1\le s \le j$.  It follows from \cite[Theorems 4.1, 4.2]{ MR2457403} that a bipartite graph $G$ with no isolated vertex is a Ferrers graph if and only if the complement graph $\bar{G}$ is chordal. Therefore, by Theorem \ref{ssg and chordal graph}, a bipartite graph  with no isolated vertex is ESS if and only if it is a Ferrers graph. In this section, we will study the ESS property of these graphs from a new point of view.

Let $G$ be a finite simple graph. Take arbitrary vertex $w$ from $G$.
The set of vertices which has distance $i$ from $w$ will be denoted by $D(w, i)$. Clearly, $D(w, 0)=\{w\}$ and $D(w, 1)=N(w)$, the neighborhood of $w$.

\begin{definition}
    For a vertex $x \in D(w, i)$, the \Index{upward neighborhood} of $x$ with respect to $w$, denoted by $\uN_w(x)$, is the set $N(x) \cap D(w, i+1)$. The cardinality of this set will be called the \Index{upward degree} of $x$ with respect to $w$ and denoted by $\ud_w(x)$. The \Index{upward degree sequence} on $D(w, i)$ is a sequence of upward degrees of all vertices in $D(w, i)$ arranged in decreasing order:
    \[
    \ud_w(x_1) \geq  \ud_w(x_2) \geq \cdots \geq \ud_w(x_t).
    \]
    Correspondingly, we can define the \Index{downward neighborhood}  $\dN_w(x)$. 
\end{definition}

Let $G$ be a simple graph. Recall that the \Index{line graph} $L(G)$ of $G$ is the graph such that $\calV(L(G))=\calE(G)$ and $\Set{e_1,e_2}\in \calE(L(G))$ if and only if $e_1$ and $e_2$ are adjacent in $G$.  For simplicity, if two edges $e_1$ and $e_2$ of $G$ satisfies $\dis_{L(G)}(e_1,e_2)=d$, we will also say that they have distance $d$ in $G$.

In particular, if $G$ is an ESS graph,
then every two non-adjacent edges of $G$ will be adjacent to a common edge in $G$. In other words, every two non-adjacent edges of an ESS graph have distance $2$.
Consequently, the diameter of $G$ is at most $3$. Hence, for any vertex $w$ in $G$, we have the partition of the vertex set
\[
\calV(G)=D(w, 0) \sqcup D(w, 1) \sqcup D(w, 2) \sqcup D(w, 3). 
\]

\begin{lemma}
    \label{inclusion}
    Let $w$ be a vertex of an ESS bipartite graph $G$. For $i=1$ or $2$ and two vertices $x, y \in D(w, i)$, we have either $\uN_w(x) \subseteq \uN_w(y)$ or $\uN_w(y) \subseteq\uN_w(x)$.
\end{lemma}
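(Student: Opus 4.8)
The plan is to argue by contradiction, handling the cases $i=1$ and $i=2$ simultaneously. Suppose neither $\uN_w(x)\subseteq \uN_w(y)$ nor $\uN_w(y)\subseteq \uN_w(x)$ holds. Then I can pick vertices $a\in \uN_w(x)\setminus \uN_w(y)$ and $b\in \uN_w(y)\setminus \uN_w(x)$, both lying in $D(w,i+1)$. By construction $\{x,a\},\{y,b\}\in\calE(G)$, while $\{y,a\}\notin\calE(G)$ and $\{x,b\}\notin\calE(G)$.

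First I would verify that $\{x,a\}$ and $\{y,b\}$ are two genuinely non-adjacent edges. Indeed $x\ne y$, and since $x,y\in D(w,i)$ while $a,b\in D(w,i+1)$, the only way the two edges could share a vertex is $a=b$; but $a\in\uN_w(x)$ and $b\notin\uN_w(x)$ force $a\ne b$. Thus the two edges have disjoint vertex sets, so by Remark \ref{ESS-graph} the ESS hypothesis applies: they must have distance $2$ in $G$, that is, there is a single edge $e$ adjacent to both.

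The core of the argument is to rule out every candidate for such a connecting edge $e$. An edge adjacent to $\{x,a\}$ contains $x$ or $a$, and an edge adjacent to $\{y,b\}$ contains $y$ or $b$; hence $e$ must be one of $\{x,y\}$, $\{x,b\}$, $\{a,y\}$, $\{a,b\}$. Now $\{x,b\}$ and $\{a,y\}$ are excluded by the very choice of $a$ and $b$. For the remaining two I would invoke bipartiteness: as $x,y\in D(w,i)$ lie at distances from $w$ of the same parity, they belong to the same color class, so $\{x,y\}\notin\calE(G)$; likewise $a,b\in D(w,i+1)$ share a color class, so $\{a,b\}\notin\calE(G)$. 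Hence no connecting edge exists, contradicting the ESS property, and one of the two inclusions must therefore hold.

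The only delicate point is the bookkeeping in the third step: one must be sure that the four listed pairs are really the only possible common neighbors of the two edges in the line graph, and that bipartiteness does force $\{x,y\}$ and $\{a,b\}$ to be non-edges for both values of $i$ (using that $D(w,j)$ and $D(w,j')$ meet the same part of the bipartition exactly when $j\equiv j'\pmod 2$). Everything else is a direct unwinding of the definitions of $\uN_w$ and of the distance layers $D(w,\cdot)$, together with the fact, recalled just before the lemma, that non-adjacent edges of an ESS graph have distance $2$.
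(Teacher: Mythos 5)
Your proof is correct and follows essentially the same route as the paper's: choose witnesses $a\in \uN_w(x)\setminus \uN_w(y)$ and $b\in \uN_w(y)\setminus \uN_w(x)$, observe that bipartiteness rules out $\{x,y\}$ and $\{a,b\}$ as edges, and conclude that the non-adjacent edges $\{x,a\}$ and $\{y,b\}$ have no common adjacent edge, contradicting the ESS distance-$2$ property. The extra bookkeeping you supply (disjointness of the two edges and the enumeration of the four candidate connecting edges) is implicit in the paper's shorter argument and is correct.
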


\begin{proof}
    Assume for contradiction that we can find $x' \in \uN_w(x) \setminus \uN_w(y)$ and $y' \in \uN_w(y) \setminus \uN_w(x)$. Then $\{x, x'\}, \, \{y, y'\} \in \calE(G)$, and $\{x, y'\}, \, \{y, x'\} \notin \calE(G)$. Since $G$ is a bipartite graph, we have $\{x, y\}, \, \{x', y'\} \notin \calE(G)$. Thus, the distance between $\{x, x'\}$ and $\{y, y'\}$ is more than 2. This is a contradiction since every two non-adjacent edges of an ESS graph have distance $2$.
\end{proof}

\begin{corollary}
    \label{decreasing}
    Let $w$ be a vertex of an ESS bipartite graph $G$. For $i=1$ or $2$, if $|D(w,i)|=t$, we can order the vertices in $D(w,i): x_1,x_2,\dots,x_t$ such that $\uN_w(x_1)\supseteq  \cdots \supseteq \uN_w(x_t)$. In particular, $\ud_w(x_1)\ge \cdots \ge \ud_w(x_t)$.
\end{corollary}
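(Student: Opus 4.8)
The plan is to read Lemma \ref{inclusion} as the statement that the family of upward neighborhoods $\Set{\uN_w(x) : x\in D(w,i)}$ is \emph{totally ordered} by inclusion: for any two vertices $x,y\in D(w,i)$ the sets $\uN_w(x)$ and $\uN_w(y)$ are comparable. Once this is in hand, producing the desired ordering is a purely order-theoretic matter, since a finite totally ordered collection of sets can always be listed as a decreasing chain.

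Concretely, first I would define a relation $\preceq$ on the vertex set $D(w,i)$ by declaring $x\preceq y$ exactly when $\uN_w(x)\subseteq \uN_w(y)$. This relation is reflexive and transitive because set inclusion is, and it is \emph{total} precisely by Lemma \ref{inclusion}; thus $\preceq$ is a total preorder on the finite set $D(w,i)$. The only feature that prevents $\preceq$ from being an honest total order is that two distinct vertices may share the same upward neighborhood, but this causes no difficulty.

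Next I would invoke the elementary fact that every total preorder on a finite set admits a non-increasing linear arrangement: one may group the vertices of $D(w,i)$ according to their finitely many distinct upward neighborhoods, order these distinct neighborhoods as a strictly decreasing chain $S_1\supsetneq S_2\supsetneq\cdots$ (possible since they are pairwise comparable), and then list the vertices block by block. This yields a relabelling $x_1,x_2,\dots,x_t$ of the vertices of $D(w,i)$ with $\uN_w(x_1)\supseteq \uN_w(x_2)\supseteq\cdots\supseteq\uN_w(x_t)$.

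Finally, passing from inclusions to cardinalities is immediate: $A\subseteq B$ forces $|A|\le |B|$, so $\ud_w(x_1)\ge \ud_w(x_2)\ge\cdots\ge \ud_w(x_t)$, which is exactly the asserted upward degree sequence. There is essentially no obstacle beyond recognizing that pairwise comparability together with transitivity of inclusion is precisely what is needed to linearize; the substance of the corollary lives entirely in Lemma \ref{inclusion}.
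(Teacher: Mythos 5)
Your proof is correct and follows exactly the route the paper intends: the corollary is stated without proof precisely because it is the immediate linearization of the pairwise comparability given by Lemma \ref{inclusion}, which is what you carry out. Spelling out the total-preorder argument and the passage from inclusions to cardinalities fills in the same (routine) details the authors left implicit.
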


\begin{lemma}
    \label{D(w, 2)}
    Let $w$ be a vertex of an ESS bipartite graph $G$. For any vertex $x \in D(w, 2)$ with $\uN_w(x) \neq \varnothing$, we have $\dN_w(x)=D(w, 1)$.
\end{lemma}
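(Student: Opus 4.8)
The plan is to argue by contradiction, combining the fact recorded just before Lemma~\ref{inclusion} that in an ESS graph every two non-adjacent edges have distance exactly $2$, with the parity constraints forced by bipartiteness. The first step is the structural observation that, since $G$ is bipartite and $w\in D(w,0)$, the distance classes alternate between the two sides of the bipartition: $D(w,0)\cup D(w,2)$ lies in one part and $D(w,1)\cup D(w,3)$ in the other. In particular, no two vertices lying in $D(w,1)\cup D(w,3)$ are adjacent, and $x\in D(w,2)$ is not adjacent to $w$. Since $\dN_w(x)\subseteq D(w,1)$ holds by definition, it suffices to prove the reverse inclusion $D(w,1)\subseteq N(x)$.

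Using the hypothesis $\uN_w(x)\ne\varnothing$, I would fix a vertex $x'\in\uN_w(x)\subseteq D(w,3)$, so that $\{x,x'\}\in\calE(G)$, and suppose for contradiction that some $y\in D(w,1)$ has $\{x,y\}\notin\calE(G)$. Then $\{w,y\}$ is an edge, since $y\in D(w,1)=N(w)$, and the two edges $\{x,x'\}$ and $\{w,y\}$ are vertex-disjoint, their four endpoints lying in the four distinct distance classes. Hence they are non-adjacent, and the ESS property provides an edge $e_3$ adjacent to both. Because $e_3$ meets the two disjoint vertex sets $\{x,x'\}$ and $\{w,y\}$ in single vertices, it must have the form $\{a,b\}$ with $a\in\{x,x'\}$ and $b\in\{w,y\}$.

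The decisive step, which I expect to be the only real content, is to rule out all four candidates for $e_3$: the edges $\{x,w\}$ and $\{x',w\}$ fail because $x\in D(w,2)$ and $x'\in D(w,3)$ are not adjacent to $w$; the edge $\{x,y\}$ fails by the contradiction hypothesis; and the crucial edge $\{x',y\}$ fails by the bipartite observation, since $x'$ and $y$ both lie in $D(w,1)\cup D(w,3)$. This exhausts the possibilities and contradicts the existence of $e_3$, forcing $\{x,y\}\in\calE(G)$ for every $y\in D(w,1)$, i.e.\ $\dN_w(x)=D(w,1)$. The subtle point worth flagging is the pairing of $\{x,x'\}$ with the edge $\{w,y\}$ rather than with $\{x,y\}$ itself: it is this choice that makes the cross term $\{x',y\}$ land inside a single bipartition class, where bipartiteness can finish the argument.
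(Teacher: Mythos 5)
Your proof is correct and takes essentially the same route as the paper's: both argue by contradiction, pairing the edge $\{x,x'\}$ (the paper's $\{x,y\}$ with $y\in D(w,3)$) against the edge $\{w,y\}$ (the paper's $\{w,z\}$) and invoking the ESS distance-$2$ property, with the contradiction coming from ruling out all four candidate connecting edges. The only cosmetic difference is that you exclude the cross edge $\{x',y\}$ via bipartiteness, while the paper excludes it using $\dis_G(w,x')=3$ (a neighbor of a vertex in $D(w,1)$ has distance at most $2$ from $w$); both are valid one-line justifications.
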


\begin{proof}
    Since $\uN_w(x) \neq \varnothing$, we have $y \in D(w, 3)$ such that $\{x, y\}$ is an edge of $G$. Assume that there exists $z \in D(w, 1) \setminus \dN_w(x)$. We claim that the distance between $\{x, y\}$ and $\{w, z\}$ is more than 2. In fact, the claim follows from the facts that $\{x, z\} \notin \calE(G)$, $\dis_G(w, x)=2$ and $\dis_G(w, y)=3$.  But this is again a contradiction since every two non-adjacent edges of an ESS graph have distance $2$.
\end{proof}

\begin{corollary}
    Let $w$ be a vertex of an ESS bipartite graph $G$.
    We order the vertices in $D(w,1): x_1,\dots,x_t$ and in $D(w,2):y_1,\dots,y_s$ as in Corollary \ref{decreasing}. If $\ud_w(y_i) \ge 1$ for some $i\in[s]$, then $y_i\in \uN_w(x_t)$. In particular, this index $i$ is at most  $\ud_w(x_t)$.
\end{corollary}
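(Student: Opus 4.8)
The plan is to derive both assertions directly from Lemma~\ref{D(w, 2)}, using the monotonicity built into the ordering of Corollary~\ref{decreasing} only for the second, sharper, conclusion.

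First I would establish the membership $y_i \in \uN_w(x_t)$. The hypothesis $\ud_w(y_i) \ge 1$ is by definition the statement that $\uN_w(y_i) \neq \varnothing$. Hence Lemma~\ref{D(w, 2)} applies to $y_i$ and yields $\dN_w(y_i) = D(w, 1)$; in particular $x_t \in D(w, 1) = \dN_w(y_i)$, so $\{x_t, y_i\} \in \calE(G)$. Since $y_i \in D(w, 2)$, this is exactly the assertion $y_i \in N(x_t) \cap D(w, 2) = \uN_w(x_t)$.

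For the ``in particular'' clause I would exploit that the vertices $y_1, \dots, y_s$ of $D(w, 2)$ were arranged by Corollary~\ref{decreasing} so that $\ud_w(y_1) \ge \cdots \ge \ud_w(y_s)$. Consequently, for every index $j$ with $1 \le j \le i$ one still has $\ud_w(y_j) \ge \ud_w(y_i) \ge 1$, so the argument of the previous paragraph applies verbatim to each such $y_j$ and gives $y_j \in \uN_w(x_t)$. Thus the $i$ distinct vertices $y_1, \dots, y_i$ all lie in $\uN_w(x_t)$, whence $\ud_w(x_t) = |\uN_w(x_t)| \ge i$, which is the desired bound $i \le \ud_w(x_t)$.

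I do not expect any serious obstacle here, since Lemma~\ref{D(w, 2)} does the heavy lifting. The one point requiring care is to recognize that the bare hypothesis on the single vertex $y_i$ suffices only for the first assertion; upgrading it to the counting estimate $i \le \ud_w(x_t)$ genuinely needs the decreasing upward-degree ordering on $D(w,2)$, which propagates the condition $\ud_w(\cdot) \ge 1$ to all earlier vertices $y_1, \dots, y_{i-1}$ and thereby forces $i$ of them into $\uN_w(x_t)$.
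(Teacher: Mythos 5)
Your proof is correct and is essentially the argument the paper intends: the membership $y_i\in \uN_w(x_t)$ is an immediate application of Lemma~\ref{D(w, 2)} (since $\ud_w(y_i)\ge 1$ gives $\dN_w(y_i)=D(w,1)\ni x_t$), and the bound $i\le \ud_w(x_t)$ follows by propagating this to $y_1,\dots,y_i$ via the decreasing ordering of Corollary~\ref{decreasing}. No gaps; the observation that the counting step genuinely requires the ordering is exactly the right point of care.
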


Now, we are ready to explain how to re-construct an ESS bipartite graph $G$ which has no isolated vertex.

\begin{construction}
    \label{ESS bipartite graph}
    Given two upward degree sequences $d_1\ge\cdots\ge d_t$ and $d_1'\ge \cdots \ge d_{d_t}'\ge d_{d_t+1}'=\cdots=d_{d_1}'=0$, we construct a graph $G$ as follows:
    \begin{enumerate}[1]
        \item Start with a vertex, which will be called $w$.
        \item Choose a set of new vertices $V(w, 1)=\{x_1, \dots, x_t\}$ for $G$. We require that  $w$ is adjacent to all vertices in $V(w, 1)$ in $G$. For each $i\in[t]$, label the vertex $x_i$ implicitly by the weight $d_i$.
        \item Choose a set of new vertices $V(w, 2)=\{y_1, \dots, y_{d_1}\}$ for $G$. For each $i\in [t]$, we require that $x_i$ is adjacent to all the initial $d_i$ vertices in $V(w, 2)$ in $G$. For each $j\in [d_1]$, label the vertex $y_j$ implicitly by the weight $d_j'$.
        \item Choose the final set of new vertices $V(w, 3)=\{z_1, \dots, z_{d'_1}\}$ for $G$. For each $k\in[d_t]$, we require that $y_k$ is adjacent to all the initial $d_k'$ vertices in $V(w,3)$ in $G$.
    \end{enumerate}
    The graph $G$ here will be called a graph \Index{constructed (from vertex $w$) by upward degree sequences}.
\end{construction}

\begin{example}
    In Figure \ref{Construction by upward degree sequences}, we have a graph constructed by upward degree sequences $4\ge 3 \ge 2$ and $2\ge 1 \ge 0 = 0$.
    \begin{figure}[!ht]
        \begin{minipage}[h]{\linewidth} \centering
            \includegraphics{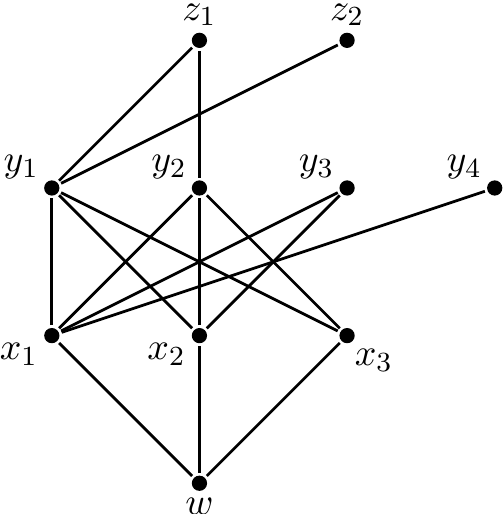}
        \end{minipage}
        \caption{Construction by upward degree sequences}
        \label{Construction by upward degree sequences}
    \end{figure}
\end{example}

\begin{remark}
    \label{explain ssbg}
    \begin{enumerate}[1]
        \item Let $G$ be a graph constructed by upward sequences as above. It is clear that $G$ is bipartite and $\dis_G(w,x)\le 3$ for any $x\in G$. Furthermore, $D(w, i) = V(w, i)$ for $i=1, 2, 3$.
        \item In addition, if $d_t=0$, then there exists no vertex in $D(w, 2)$ which has nonempty upward neighborhood. Hence $V(w, 3) = \varnothing$.
    \end{enumerate}
\end{remark}

\begin{theorem}
    \label{ssbg=cbuds}
    Let $G$ be a connected finite simple graph. Then the following conditions are equivalent:
    \begin{enumerate}[1]
        \item \label{ssbg=cbuds-1} $G$ is an ESS bipartite graph.
        \item \label{ssbg=cbuds-2} $G$ can be constructed from any vertex of $G$ by upward degree sequences.
        \item \label{ssbg=cbuds-3} $G$ can be constructed from a vertex of $G$ by upward degree sequences.
    \end{enumerate}
\end{theorem}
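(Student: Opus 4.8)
The plan is to prove the theorem by establishing the cycle of implications $\ref{ssbg=cbuds-1}\Rightarrow\ref{ssbg=cbuds-2}\Rightarrow\ref{ssbg=cbuds-3}\Rightarrow\ref{ssbg=cbuds-1}$. The implication $\ref{ssbg=cbuds-2}\Rightarrow\ref{ssbg=cbuds-3}$ is immediate once we know $G$ has a vertex, so the genuine work lies in the first and the last arrow.

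For $\ref{ssbg=cbuds-1}\Rightarrow\ref{ssbg=cbuds-2}$, I would fix an arbitrary vertex $w$. Since $G$ is ESS, every two non-adjacent edges have distance $2$, so the diameter of $G$ is at most $3$; together with connectedness this yields the partition $\calV(G)=D(w,0)\sqcup D(w,1)\sqcup D(w,2)\sqcup D(w,3)$. First I would invoke Corollary \ref{decreasing} to list $D(w,1)=\{x_1,\dots,x_t\}$ with $\uN_w(x_1)\supseteq\cdots\supseteq\uN_w(x_t)$ and set $d_i\coloneqq\ud_w(x_i)$. Because every vertex of $D(w,2)$ lies in some $\uN_w(x_i)$ and these sets form a chain, the largest member $\uN_w(x_1)$ already exhausts $D(w,2)$; hence $|D(w,2)|=d_1$, and one can enumerate $D(w,2)=\{y_1,\dots,y_{d_1}\}$ so that $\uN_w(x_i)=\{y_1,\dots,y_{d_i}\}$ for every $i$. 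This reproduces steps (2) and (3) of Construction \ref{ESS bipartite graph}.

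The delicate point — and the step I expect to be the main obstacle — is to realize the $D(w,2)$-to-$D(w,3)$ adjacencies in this very same labeling, i.e.\ to reconcile the ``downward'' ordering of $D(w,2)$ just fixed with the ``upward'' ordering demanded by the construction. Here Lemma \ref{D(w, 2)} is decisive: any $y\in D(w,2)$ with $\uN_w(y)\neq\varnothing$ satisfies $\dN_w(y)=D(w,1)$, so such a $y$ is adjacent to $x_t$ and therefore lies in $\uN_w(x_t)=\{y_1,\dots,y_{d_t}\}$. Consequently $y_{d_t+1},\dots,y_{d_1}$ all have upward degree $0$, matching the constraint $d'_{d_t+1}=\cdots=d'_{d_1}=0$. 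Moreover every $y_k$ with $k\le d_t$ lies in $\uN_w(x_i)$ for all $i$, hence is adjacent to all of $D(w,1)$; so the block $\{y_1,\dots,y_{d_t}\}$ sits inside every $\uN_w(x_i)$ and its elements are interchangeable for the downward structure. This freedom lets me reorder $y_1,\dots,y_{d_t}$ by decreasing upward degree (legitimate by Corollary \ref{decreasing}) without disturbing any identity $\uN_w(x_i)=\{y_1,\dots,y_{d_i}\}$, since $d_i\ge d_t$ always. A final application of the chain property of the sets $\uN_w(y_k)$ then produces $D(w,3)=\{z_1,\dots,z_{d'_1}\}$ with $y_k$ joined to $\{z_1,\dots,z_{d'_k}\}$, completing step (4) and exhibiting $G$ as constructed from $w$.

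For $\ref{ssbg=cbuds-3}\Rightarrow\ref{ssbg=cbuds-1}$, let $G$ be constructed from $w$. By Remark \ref{explain ssbg} it is bipartite with the prescribed distance levels, and tracing the adjacencies shows every $y_j$ meets $x_1$ and every $z_l$ meets $y_1$, so $G$ is connected and (barring the trivial edgeless case) has no isolated vertex. I would then verify that $G$ is a Ferrers graph by exhibiting a nested chain of neighborhoods on one bipartition class: ordering $D(w,1)\cup D(w,3)$ as $x_1,\dots,x_t,z_1,\dots,z_{d'_1}$, one gets $N(x_1)\supseteq\cdots\supseteq N(x_t)\supseteq N(z_1)\supseteq\cdots\supseteq N(z_{d'_1})$, where the key inclusion $N(z_l)\subseteq\{y_1,\dots,y_{d_t}\}\subseteq N(x_t)$ follows from the vanishing of the tail of the $d'$-sequence. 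Since one bipartition class has nested neighborhoods, $G$ is a Ferrers graph, and the Ferrers characterization recalled at the start of this section (via Theorem \ref{ssg and chordal graph}) gives that $G$ is ESS, closing the cycle.
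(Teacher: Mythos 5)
Your proof is correct, and its first two implications follow the paper's own proof almost verbatim: the paper likewise fixes an arbitrary $w$, uses Lemma \ref{inclusion}/Corollary \ref{decreasing} to order $D(w,1)$ by nested upward neighborhoods, concludes $D(w,2)=\uN_w(x_1)$, and invokes Lemma \ref{D(w, 2)} to see that only $y_1,\dots,y_{d_t}$ can have upward neighbors (the paper's ``we may assume $\uN_w(y_1)\supseteq\cdots\supseteq\uN_w(y_{d_t})$'' silently performs exactly the reordering whose harmlessness you justify explicitly -- a nice touch, since the prefix identities $\uN_w(x_i)=\{y_1,\dots,y_{d_i}\}$ must survive it). Where you genuinely diverge is in \ref{ssbg=cbuds-3}$\Rightarrow$\ref{ssbg=cbuds-1}: the paper proves this directly and self-containedly, by exhibiting the lexicographic order on $\calE(G)$ induced by $w\succ x_1\succ\cdots\succ x_t\succ y_1\succ\cdots\succ y_{d_1}\succ z_1\succ\cdots\succ z_{d_1'}$ and checking, in four cases on the levels of the leading vertices, that it is a strong shelling order; you instead show the constructed graph has nested neighborhoods on the class $D(w,1)\cup D(w,3)$, hence is Ferrers, and then quote the equivalence ``ESS $\Leftrightarrow$ Ferrers'' recalled at the start of the section. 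Your route is shorter and logically sound (that equivalence is established before the theorem, via Theorem \ref{ssg and chordal graph} and the cited Corso--Nagel result, so there is no circularity), but it buys brevity at a real cost: the paper's stated purpose is that the upward-sequence construction should \emph{induce} the Ferrers characterization of ESS bipartite graphs (this is exactly what the remark following Corollary \ref{distance 2} does), and with your proof that derivation becomes circular, since the characterization is now an input to the theorem rather than a consequence. The paper's explicit shelling order also has the side benefit of producing a concrete linear quotient order on the edge ideal. If you want your argument to stand in for the paper's, you should either keep the direct verification of strong shellability, or note explicitly that the subsequent re-derivation of the Ferrers characterization must then be dropped.
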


\begin{proof}
    \ref{ssbg=cbuds-1}$\Rightarrow$\ref{ssbg=cbuds-2}: Let $G$ be an ESS bipartite graph. Choose a vertex $w$  arbitrarily and set $V(w, 1)=D(w, 1)$. It is clear that $w$ is adjacent to any vertex in $V(w, 1)$. By Lemma \ref{inclusion}, there exists a total order $x_1, \dots, x_t$
    of the vertices in $V(w,1)$ such that
    \begin{equation}
        \label{uN-1} \tag{\dag}
        \uN_w(x_1)\supseteq \cdots \supseteq \uN_w(x_t).
    \end{equation}
    If we denote the upward degree of $x_i$ with respect to $w$ by $d_i$, then obviously $d_1 \geq d_2 \geq \cdots \geq d_t$.

    Following from the containment in \eqref{uN-1}, there exists a total order $y_1,\dots,y_{d_1}$ of vertices in $\uN_w(x_1)$, such that $y_j\in \uN_w(x_i)$ whenever $j\le d_i$. Note that if $y\in D(w,2)$, then $y$ is adjacent to some $x_i\in D(w,1)$. Hence $y\in \uN_w(x_i)\subseteq \uN_w(x_1)$. Therefore $D(w,2)=\uN_w(x_1)$. We will set $V(w, 2)=D(w, 2)$.

    By Lemma \ref{D(w, 2)}, any $y \in D(w,2)$ with $\uN_w(y) \neq \varnothing$ will satisfy $y \in \uN_w(x_t)$.
    Note that $\uN_w(x_t)=\Set{y_1,\dots,y_{d_t}}$. By Lemma \ref{inclusion}, we may assume that
    \begin{equation}
        \label{uN-2}  \tag{\ddag}
        \uN_w(y_1)\supseteq \cdots \supseteq \uN_w(y_{d_t}).
    \end{equation}
    If we denote the upward degree of $y_i$ with respect to $w$ by $d_i'$, then obviously
    $d_1'\ge \cdots \ge d_{d_t}'\ge d_{d_t+1}'=\cdots=d_{d_1}'=0$.

    Following from the containment in \eqref{uN-2}, there exists a total order $z_1,\dots,z_{d_1'}$ of vertices in $\uN_w(y_1)$, such that $z_j\in \uN_w(y_i)$ whenever $j\le d_i'$. Similar to the previous argument, we will have $D(w,3)=\uN_w(y_1)$.  Set $V(w, 3)=\uN_w(y_1)$.

    Since the diameter of $G$ is at most $3$, we have
    \begin{align*}
        V(G)=& \{w\} \sqcup D(w, 1) \sqcup D(w, 2) \sqcup D(w, 3) \\
        = & \{w\} \sqcup V(w, 1) \sqcup V(w, 2) \sqcup V(w, 3).
    \end{align*}
    By the above discussion, $G$ is constructed from $w$ by upward degree sequences.

    \ref{ssbg=cbuds-2}$\Rightarrow$\ref{ssbg=cbuds-3} is clear.

    \ref{ssbg=cbuds-3}$\Rightarrow$\ref{ssbg=cbuds-1}: Let $G$ be the graph constructed by Construction \ref{ESS bipartite graph}. It is easy to see that $G$ is a bipartite graph with no isolated vertex. In the following, we will show that $G$ is ESS. Let's assign an total order $\succ$ on $V(G)$:
    \[
    w \succ x_1 \succ \cdots \succ x_t \succ y_1 \succ \cdots \succ y_{d_1} \succ z_1 \succ \cdots \succ  z_{d'_1}.
    \]
    For simplicity, for each edge $\{a_1, a_2\}$ in $G$, we always assume that $a_1 \succ a_2$. We will consider the lexicographic order $\succ_{lex}$ on the edge set of $G$ with respect to $\succ$, i.e., for every pair of edges $\{a_1, a_2\}, \{b_1, b_2\} \in \calE(G)$,
    \[
    \text{$\{a_1, a_2\} \succ_{lex} \{b_1, b_2\}$ if and only if $a_1 \succ b_1$, or $a_1 = b_1$ and $a_2 \succ b_2$.}
    \]
    We claim that the lexicographic order $\succ_{lex}$ on $\calE(G)$ is a strong shelling order.
    Take two distinct edges $\{a_1,a_2\}\succ_{lex}\{b_1,b_2\}$. We may assume that these two edges are disjoint. Thus, it suffices to consider the following cases:
    \begin{enumerate}[i]
        \item Suppose that $a_1$ and $b_1$ belong to the same set $V(w, i)$ for some $i$. As $a_1\succ b_1$, $\uN_w(a_1)\supseteq \uN_w(b_1)$. Thus, we have the connecting edge $\{a_1, b_2\} \in \calE(G)$. It is obvious that $\{a_1,b_2\}\succ_{lex} \{b_1,b_2\}$.
        \item Suppose that $a_1=w$ and $b_1 \in V(w,1)$. Then we have the connecting edge $\{a_1, b_1\} \in \calE(G)$. It is obvious that $\{a_1,b_1\}\succ_{lex} \{b_1,b_2\}$.
        \item Suppose that $a_1=w$ and $b_1 \in V(w,2)$.  As $b_2\in \uN_w(b_1)\ne \varnothing$, $\dN_w(b_1)=V(w,1)$ by Lemma \ref{D(w, 2)}.  Hence we have the connecting edge $\{a_2, b_1\} \in \calE(G)$.  It is obvious that $\{a_2,b_1\}\succ_{lex} \{b_1,b_2\}$.
        \item Suppose that $a_1 \in V(w,1)$ and $b_1 \in V(w,2)$. As the previous case, $\dN_w(b_1)=V(w,1)$.  Hence we have the connecting edge $\{a_1, b_1\} \in \calE(G)$.  It is obvious that $\{a_1,b_1\}\succ_{lex} \{b_1,b_2\}$.
    \end{enumerate}
    Thus $\succ_{lex}$ is a strong shelling order on $\calE(G)$.
\end{proof}

\begin{corollary}
    \label{distance 2}
    Let $G$ be an ESS bipartite graph which has no isolated vertex. Then there exists a vertex $w'$, such that the distance between $w'$ and any vertex of $G$ is at most 2.
\end{corollary}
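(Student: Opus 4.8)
The plan is to realize $G$ through the construction of Theorem~\ref{ssbg=cbuds} and then exhibit the desired vertex explicitly as the ``largest'' neighbor of the chosen base point. Since $G$ is ESS and bipartite with no isolated vertex, the discussion preceding Lemma~\ref{inclusion} shows that $G$ is connected with diameter at most $3$, so for any fixed vertex $w$ we have the partition $\calV(G)=D(w,0)\sqcup D(w,1)\sqcup D(w,2)\sqcup D(w,3)$. By Corollary~\ref{decreasing} I may list $D(w,1)=\{x_1,\dots,x_t\}$ so that $\uN_w(x_1)\supseteq\cdots\supseteq\uN_w(x_t)$, and I would take $w'\coloneqq x_1$, the neighbor of $w$ carrying the largest upward neighborhood.

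Next I would verify that $\dis_G(w',v)\le 2$ for every $v\in\calV(G)$, proceeding layer by layer. For $v=w$ this is immediate, as $w'$ is adjacent to $w$; for $v\in D(w,1)$ the two-step path $x_1,w,v$ gives $\dis_G(w',v)\le 2$. The crucial step is the layer $D(w,2)$: any such $v$ is adjacent to some $x_i\in D(w,1)$ lying on a shortest path to $w$, so $v\in\uN_w(x_i)\subseteq\uN_w(x_1)$, whence $v$ is in fact adjacent to $w'=x_1$ and $\dis_G(w',v)=1$. In other words, $w'$ is adjacent to \emph{every} vertex of $D(w,2)$. Finally, for $v\in D(w,3)$ a shortest path to $w$ provides a neighbor $y\in D(w,2)$ of $v$; since $w'$ is adjacent to $y$ by the previous step, the path $x_1,y,v$ yields $\dis_G(w',v)\le 2$. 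This exhausts all four layers.

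The only step carrying genuine content is the $D(w,2)$ case, and its proof rests entirely on the nesting property $\uN_w(x_i)\subseteq\uN_w(x_1)$ supplied by Lemma~\ref{inclusion} and Corollary~\ref{decreasing}; the remaining cases are a routine traversal of the distance layers. Equivalently, one may phrase everything through Theorem~\ref{ssbg=cbuds}: writing $G$ as constructed from $w$ by upward degree sequences with $V(w,1)=\{x_1,\dots,x_t\}$, $V(w,2)=\{y_1,\dots,y_{d_1}\}$ and $V(w,3)=\{z_1,\dots,z_{d_1'}\}$, the vertex $x_1$ is adjacent to all $d_1=|V(w,2)|$ vertices of $V(w,2)$, while $y_1$ is adjacent to all $d_1'=|V(w,3)|$ vertices of $V(w,3)$, so that $x_1,y_1,z_l$ realizes the distance-$2$ bound on the last layer. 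I do not anticipate a real obstacle, since connectivity and the diameter bound are already in hand and the choice $w'=x_1$ makes the adjacency claims transparent.
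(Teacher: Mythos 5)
Your proposal is correct, and it takes a slightly different (and somewhat cleaner) route than the paper. The paper's proof invokes Theorem \ref{ssbg=cbuds} to realize $G$ as constructed from $w$ by upward degree sequences and then splits into two cases: if $d_t=0$, then $V(w,3)=\varnothing$ by Remark \ref{explain ssbg} and $w'=w$ itself works; if $d_t\neq 0$, it takes $w'=y_1$, the top vertex of $V(w,2)$, which by the construction (equivalently, by Lemma \ref{D(w, 2)}) is adjacent to all of $V(w,1)\cup V(w,3)$ and at distance $2$ from $w$ and from the remaining vertices of $V(w,2)$. You instead take $w'=x_1$, the neighbor of $w$ with maximal upward neighborhood, uniformly in all cases. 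The only step with real content --- that $x_1$ is adjacent to \emph{every} vertex of $D(w,2)$ --- follows from the nesting $\uN_w(x_i)\subseteq\uN_w(x_1)$ supplied by Lemma \ref{inclusion} and Corollary \ref{decreasing}, since each $v\in D(w,2)$ lies in $\uN_w(x_i)$ for some $i$; the layers $D(w,1)$ and $D(w,3)$ are then dispatched by two-step paths through $w$ and through $D(w,2)$ respectively. What your version buys: no case distinction on $d_t$, no reliance on Lemma \ref{D(w, 2)} or on the full strength of Theorem \ref{ssbg=cbuds} (the layer decomposition plus the nesting lemma suffice), and the marginally stronger conclusion that for \emph{every} choice of base vertex $w$ a vertex of eccentricity at most $2$ can be found inside $N(w)$. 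What the paper's version buys is that it reads off the witness directly from the data of Construction \ref{ESS bipartite graph}, which is the viewpoint the subsequent remark (identifying $G$ as a Ferrers graph) continues to use. Both arguments are valid and of comparable length.
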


\begin{proof}
    As a consequence of Theorem \ref{ssbg=cbuds}, we will assume that $G$ is constructed from the vertex $w$ by the associated upward degree sequences, as in Construction \ref{ESS bipartite graph}. Consider the following cases:
    \begin{enumerate}[1]
        \item If $d_t=0$, by Remark \ref{explain ssbg}, we will have $V(w, 3)=\varnothing$. Hence there exists no vertex in $G$ which has distance 3 from $w$.
        \item If $d_t \neq 0$, we claim that $w'=y_1$ will be sufficient. Apparently, $\dis_G(w, y_1)=2$ and $y_1$ is adjacent to any vertex in $V(w, 1) \cup V(w, 3)$. For any distinct vertex $y_i \in V(w, 2)$, it is clear that $\{x_1,y_i\}, \{x_1, y_1\} \in \calE(G)$. Hence $\dis_G(y_i, y_1)=2$ for each $2 \leq i \leq d_1$. \qedhere
    \end{enumerate}
\end{proof}

\begin{remark}
    By Corollary \ref{distance 2}, if $G$ is an ESS bipartite graph with no isolated vertex, we can choose a proper vertex $w$, such that $V(G) = \{w\} \sqcup D(w, 1) \sqcup D(w, 2)$. Assume that $D(w, 1) = \{x_1, x_2, \dots, x_t\}$ and $D(w, 2) = \{y_1, y_2, \dots, y_m\}$ as in Construction \ref{ESS bipartite graph}. If we set $y_0=w$, then the two disjoint parts  $\{x_1, x_2, \dots, x_t\}$ and $\{y_0, y_1, \dots, y_m\}$ satisfy:
    \[
    \text{if $\{x_i,y_j\} \in \calE(G)$, then $\{x_t,y_s\} \in \calE(G)$ for all $t \leq i$ and $s \leq j$.}
    \]
    This shows that $G$ is a Ferrers graph.
\end{remark}

\begin{bibdiv}
\begin{biblist}

\bib{MR1251956}{book}{
      author={Bruns, Winfried},
      author={Herzog, J{\"u}rgen},
       title={Cohen-{M}acaulay rings},
      series={Cambridge Studies in Advanced Mathematics},
   publisher={Cambridge University Press},
     address={Cambridge},
        date={1993},
      volume={39},
        ISBN={0-521-41068-1},
}

\bib{MR1341789}{article}{
      author={Bruns, Winfried},
      author={Herzog, J{\"u}rgen},
       title={On multigraded resolutions},
        date={1995},
        ISSN={0305-0041},
     journal={Math. Proc. Cambridge Philos. Soc.},
      volume={118},
       pages={245\ndash 257},
}

\bib{MR2472389}{book}{
      author={Bang-Jensen, J{\o}rgen},
      author={Gutin, Gregory},
       title={Digraphs: theory, algorithms and applications.},
     edition={Second},
      series={Springer Monographs in Mathematics},
   publisher={Springer-Verlag London Ltd.},
     address={London},
        date={2009},
        ISBN={978-1-84800-997-4},
}

\bib{arXiv:1508.03799}{article}{
      author={Bigdeli, Mina},
      author={Yazdan~Pour, Ali~Akbar},
      author={Zaare-Nahandi, Rashid},
       title={Stability of betti numbers under reduction processes: towards
  chordality of clutters},
        date={2015},
      eprint={arXiv:1508.03799},
}

\bib{MR2457403}{article}{
      author={Corso, Alberto},
      author={Nagel, Uwe},
       title={Monomial and toric ideals associated to {F}errers graphs},
        date={2009},
        ISSN={0002-9947},
     journal={Trans. Amer. Math. Soc.},
      volume={361},
       pages={1371\ndash 1395},
         url={http://dx.doi.org/10.1090/S0002-9947-08-04636-9},
}

\bib{MR0130190}{article}{
      author={Dirac, Gabriel~Andrew},
       title={On rigid circuit graphs},
        date={1961},
        ISSN={0025-5858},
     journal={Abh. Math. Sem. Univ. Hamburg},
      volume={25},
       pages={71\ndash 76},
}

\bib{MR2853077}{article}{
      author={Emtander, Eric},
      author={Mohammadi, Fatemeh},
      author={Moradi, Somayeh},
       title={Some algebraic properties of hypergraphs},
        date={2011},
        ISSN={0011-4642},
     journal={Czechoslovak Math. J.},
      volume={61(136)},
       pages={577\ndash 607},
         url={http://dx.doi.org/10.1007/s10587-011-0031-0},
}

\bib{MR2603461}{article}{
      author={Emtander, Eric},
       title={A class of hypergraphs that generalizes chordal graphs},
        date={2010},
        ISSN={0025-5521},
     journal={Math. Scand.},
      volume={106},
       pages={50\ndash 66},
}

\bib{MR1633767}{article}{
      author={Eagon, John~A.},
      author={Reiner, Victor},
       title={Resolutions of {S}tanley-{R}eisner rings and {A}lexander
  duality},
        date={1998},
        ISSN={0022-4049},
     journal={J. Pure Appl. Algebra},
      volume={130},
       pages={265\ndash 275},
}

\bib{MR0186421}{article}{
      author={Fulkerson, D.~R.},
      author={Gross, O.~A.},
       title={Incidence matrices and interval graphs},
        date={1965},
        ISSN={0030-8730},
     journal={Pacific J. Math.},
      volume={15},
       pages={835\ndash 855},
}

\bib{MR1171260}{incollection}{
      author={Fr{\"o}berg, Ralf},
       title={On {S}tanley-{R}eisner rings},
        date={1990},
   booktitle={Topics in algebra, {P}art 2 ({W}arsaw, 1988)},
      series={Banach Center Publ.},
      volume={26},
   publisher={PWN},
     address={Warsaw},
       pages={57\ndash 70},
}

\bib{SSC}{unpublished}{
      author={Guo, Jin},
      author={Shen, Yi-Huang},
      author={Wu, Tongsuo},
       title={Strong shellability of pure simplicial complexes},
        note={preprint},
}

\bib{MR2724673}{book}{
      author={Herzog, J{\"u}rgen},
      author={Hibi, Takayuki},
       title={Monomial ideals},
      series={Graduate Texts in Mathematics},
   publisher={Springer-Verlag London Ltd.},
     address={London},
        date={2011},
      volume={260},
        ISBN={978-0-85729-105-9},
}

\bib{arXiv:1508.07119}{article}{
      author={{Herzog}, J\"urgen},
      author={{Rahimi}, Ahad},
       title={{Bi-Cohen-Macaulay graphs.}},
    language={English},
        date={2016},
        ISSN={1077-8926/e},
     journal={{Electron. J. Comb.}},
      volume={23},
       pages={research paper p1.1, 14},
}

\bib{MR1918513}{article}{
      author={Herzog, J{\"u}rgen},
      author={Takayama, Yukihide},
       title={Resolutions by mapping cones},
        date={2002},
        ISSN={1512-0139},
     journal={Homology Homotopy Appl.},
      volume={4},
       pages={277\ndash 294},
        note={The Roos Festschrift volume, 2},
}

\bib{arXiv:1601.00456}{article}{
      author={Moradi, Somayeh},
      author={Khosh-Ahang, Fahimeh},
       title={Expansion of a simplicial complex},
        date={2016},
        ISSN={0219-4988},
     journal={J. Algebra Appl.},
      volume={15},
       pages={1650004},
         url={http://dx.doi.org/10.1142/S0219498816500043},
}

\bib{MR2457194}{article}{
      author={Naeem, Muhammad},
       title={Cohen-{M}acaulay monomial ideals of codimension 2},
        date={2008},
        ISSN={0025-2611},
     journal={Manuscripta Math.},
      volume={127},
       pages={533\ndash 545},
         url={http://dx.doi.org/10.1007/s00229-008-0217-4},
}

\bib{arXiv:1601.03207}{article}{
      author={Nikseresht, Ashkan},
      author={Zaare-Nahandi, Rashid},
       title={On generalizations of cycles and chordality to hypergraphs},
      eprint={arXiv:1601.03207},
}

\bib{arXiv:1511.04676}{article}{
      author={Rahmati-Asghar, Rahim},
      author={Moradi, Somayeh},
       title={On the stanley--reisner ideal of an expanded simplicial complex},
        date={2016},
        ISSN={1432-1785},
     journal={Manuscripta Mathematica},
       pages={1\ndash 13},
         url={http://dx.doi.org/10.1007/s00229-016-0822-6},
}

\bib{MR0270957}{article}{
      author={Rose, Donald~J.},
       title={Triangulated graphs and the elimination process},
        date={1970},
        ISSN={0022-247x},
     journal={J. Math. Anal. Appl.},
      volume={32},
       pages={597\ndash 609},
}

\bib{MR1453579}{book}{
      author={Stanley, Richard~P.},
       title={Combinatorics and commutative algebra},
     edition={Second},
      series={Progress in Mathematics},
   publisher={Birkh\"auser Boston Inc.},
     address={Boston, MA},
        date={1996},
      volume={41},
        ISBN={0-8176-3836-9},
}

\bib{MR2853065}{article}{
      author={Woodroofe, Russ},
       title={Chordal and sequentially {C}ohen-{M}acaulay clutters},
        date={2011},
        ISSN={1077-8926},
     journal={Electron. J. Combin.},
      volume={18},
       pages={Paper 208, 20},
}

\end{biblist}
\end{bibdiv}
\end{document}